
\NeedsTeXFormat{LaTeX2e}

\documentclass[oneside,11pt,reqno]{amsart}

\frenchspacing

\textwidth=13.5cm
\textheight=23cm
\parindent=16pt
\oddsidemargin=-0.5cm
\evensidemargin=-0.5cm
\topmargin=-0.5cm

\usepackage{verbatim,upref,amsxtra,amssymb,amscd,graphicx}

\usepackage{mathtools}

\usepackage{color}

\usepackage[colorlinks=true, pagebackref=false]{hyperref}

\usepackage{varioref}

\usepackage{pgf} \usepackage{tikz} \usetikzlibrary{arrows,automata}

\usepackage{url}

\usepackage{subfig}

\usepackage{ogonek}

\DeclareMathAlphabet\mathscr{U}{eus}{m}{n}
\SetMathAlphabet\mathscr{bold}{U}{eus}{b}{n}
\DeclareMathAlphabet\matheur{U}{eur}{m}{n}
\SetMathAlphabet\matheur{bold}{U}{eur}{b}{n}

\numberwithin{equation}{section}

\newtheorem{theo}{Theorem}[section]
\newtheorem{prop}[theo]{Proposition}
\newtheorem{lemm}[theo]{Lemma}
\newtheorem{coro}[theo]{Corollary}

\theoremstyle{definition}

\newtheorem{defi}[theo]{Definition}
\newtheorem{exam}[theo]{Example}
\newtheorem{exas}[theo]{Examples}

\theoremstyle{remark}

\newtheorem{rema}[theo]{Remark}
\newtheorem{rems}[theo]{Remarks}

\newtheorem{pros}[theo]{Problems}

\begin{document}\allowdisplaybreaks\frenchspacing


\setlength{\baselineskip}{17pt}

\title{Permutations of $\mathbb{Z}^d$ with restricted movement}

\author{Klaus Schmidt}

\author{Gabriel Strasser}

\address{Klaus Schmidt: Mathematics Institute, University of Vienna, Oskar-Morgen\-stern-Platz 1, A-1090 Vienna, Austria} \email{klaus.schmidt@univie.ac.at}

\address{Gabriel Strasser: Mathematics Institute, University of Vienna, Oskar-Morgen\-stern-Platz 1, A-1090 Vienna, Austria} \email{gabriel.strasser@univie.ac.at}

\thanks{The first author would like to thank Christian Krattenthaler, Elon Lindenstrauss, and Benjy Weiss for helpful conversations, the Banach Center for hospitality both in B\k{e}dlewo and Warsaw during completion of this article, and the Simons Fundation grant 346300 for IMPAN and the Polish MNiSW 2015-2019 matching fund for support during this period.}
\subjclass[2010]{37A35, 37B10, 37B50}
\keywords{Infinite permutations of the integers, permutations with restricted movement, parity cocycle}



	\begin{abstract}
We investigate dynamical properties of the set of permutations of $\mathbb{Z}^d$ with \textit{restricted movement}, i.e., permutations $\pi $ of $\mathbb{Z}^d$ such that $\pi (\mathbf{n})-\mathbf{n}$ lies, for every $\mathbf{n}\in \mathbb{Z}^d$, in a prescribed finite set $\mathsf{A}\subset \mathbb{Z}^d$. For $d=1$, such permutations occur, for example, in restricted orbit equivalence (cf., e.g., Boyle and Tomiyama (1998), Kammeyer and Rudolph (1997), or Rudolph (1985)), or in the calculation of determinants of certain bi-infinite multi-diagonal matrices. For $d\ge2$ these sets of permutations provide natural classes of multidimensional shifts of finite type.
	\end{abstract}

\maketitle

\section{Introduction}\label{s:Intro}

Let $d\ge1$, and let $S^\infty (\mathbb{Z}^d)$ be the group of all permutations of the integer lattice $\mathbb{Z}^d$. We fix a finite set $\mathsf{A}\subset \mathbb{Z}^d$ (always assumed to be nonempty) and write $\Pi _\mathsf{A}$ for the set of all permutations $\pi \colon \mathbf{n}\to \pi (\mathbf{n})$ of $\mathbb{Z}^d$ which satisfy that
	\begin{equation}
	\label{eq:PiA}
\omega _\mathbf{n}^{(\pi )}\coloneqq \pi (\mathbf{n}) - \mathbf{n} \in \mathsf{A} \enspace \textup{for every}\enspace \mathbf{n}\in \mathbb{Z}^d.
	\end{equation}
In view of \eqref{eq:PiA} we may regard $\Pi _\mathsf{A}$ as a subset of the space $\prod_{\mathbf{n}\in \mathbb{Z}^d}\,(\mathbf{n}+\mathsf{A})$ which is obviously closed, and hence compact, in the product topology on $\prod_{\mathbf{n}\in \mathbb{Z}^d}\,(\mathbf{n}+\mathsf{A})$.

Every $\pi \in \Pi _\mathsf{A}$ is determined by the point $\omega ^{(\pi )} =(\omega ^{(\pi )}_\mathbf{n})_{\mathbf{n}\in \mathbb{Z}^d}\in \mathsf{A}^{\mathbb{Z}^d}$. Furthermore, the set $\Omega _\mathsf{A}=\{\omega ^{(\pi )}:\pi \in \Pi _\mathsf{A}\}$ is a closed subset of the compact space $\mathsf{A}^{\mathbb{Z}^d}$, and the map $\pi \mapsto \omega ^{(\pi )}$ from $\Pi _\mathsf{A}$ to $\Omega _\mathsf{A}$ is a homeomorphism.

In view of the one-to-one correspondence between $\Pi _\mathsf{A}$ and $\Omega _\mathsf{A}$ it will be convenient to write $\pi ^{(\omega )}\in \Pi _\mathsf{A}$ the for the permutation corresponding to an element $\omega \in \Omega _\mathsf{A}$. Then $\omega =\omega ^{(\pi ^{(\omega )})}$ and $\pi ^{(\omega ^{(\pi )})}=\pi $.

	\begin{prop}
	\label{p:PiA}
Let $\varsigma $ be the $\mathbb{Z}^d$-action on itself by translation, given by $\varsigma ^{\mathbf{m}}(\mathbf{n})= \mathbf{n}+\mathbf{m}$, and let $\sigma \colon \mathbf{m}\rightarrow \sigma ^\mathbf{m}$ be the shift action $(\sigma ^\mathbf{m}\omega )_\mathbf{n}=\omega _{\mathbf{n}+\mathbf{m}}$ of $\mathbb{Z}^d$ on $\mathsf{A}^{\mathbb{Z}^d}$.
	\begin{enumerate}
	\item
For every $\mathbf{m}\in \mathbb{Z}^d$, the set $\Pi _\mathsf{A}\subset S^\infty (\mathbb{Z}^d)$ is invariant under the inner automorphism $\pi \mapsto \textup{Ad}_{\varsigma ^\mathbf{m}}(\pi )=\varsigma ^\mathbf{m}\pi \varsigma ^{-\mathbf{m}}$ of $S^\infty (\mathbb{Z}^d)$.
	\item
For every $\omega \in \Omega _\mathsf{A}$ and $\mathbf{m}\in \mathbb{Z}^d$, $\textup{Ad}_{\varsigma ^\mathbf{m}}(\pi ) = \pi ^{(\sigma ^{-\mathbf{m}}\omega )}$. Hence $\Omega _\mathsf{A}\subset \mathsf{A}^{\mathbb{Z}^d}$ is shift-invariant.
	\item
The topological $\mathbb{Z}^d$-dynamical systems $(\Pi _\mathsf{A},\textup{Ad}_\varsigma )$ and $(\Omega _\mathsf{A},\sigma )$ are topologically conjugate.
	\item
For every $\mathbf{b}\in \mathbb{Z}^d$ we set $\mathsf{A}+\mathbf{b}=\{\mathbf{a}+\mathbf{b}: \mathbf{a}\in \mathsf{A}\}$. Then $\Pi _{\mathsf{A}+\mathbf{b}} = \varsigma ^{\mathbf{b}}\Pi _\mathsf{A}$ and $\Omega _{\mathsf{A}+\mathbf{b}}=\Omega _\mathsf{A}+(\dots ,\mathbf{b},\mathbf{b},\mathbf{b},\dots )$. Furthermore, the systems $(\Omega _\mathsf{A},\sigma )$ and $(\Omega _{\mathsf{A}+\mathbf{b}},\sigma )$ $\bigl($and hence the systems $(\Pi _\mathsf{A},\textup{Ad}_\varsigma )$ and $(\Pi _{\mathsf{A}+\mathbf{b}},\textup{Ad}_\varsigma )\bigr)$ are topologically conjugate.
	\end{enumerate}
	\end{prop}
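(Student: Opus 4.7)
The plan is to prove all four assertions by direct calculation, using the already-established correspondence $\pi\leftrightarrow \omega^{(\pi)}$; nothing more than unwinding the definitions should be required. I expect no real obstacle; the only mild subtlety is a sign convention in part (3), discussed below.

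For (1) and (2), I would fix $\pi\in\Pi_\mathsf{A}$ and $\mathbf{m}\in\mathbb{Z}^d$ and compute, from the definitions of $\textup{Ad}$, $\sigma$, and $\omega^{(\pi)}$,
\begin{equation*}
(\textup{Ad}_{\varsigma^\mathbf{m}}\pi)(\mathbf{n})-\mathbf{n}
=\varsigma^\mathbf{m}\bigl(\pi(\mathbf{n}-\mathbf{m})\bigr)-\mathbf{n}
=\pi(\mathbf{n}-\mathbf{m})-(\mathbf{n}-\mathbf{m})
=\omega^{(\pi)}_{\mathbf{n}-\mathbf{m}}
=(\sigma^{-\mathbf{m}}\omega^{(\pi)})_\mathbf{n}.
\end{equation*}
The rightmost side lies in $\mathsf{A}$, so $\textup{Ad}_{\varsigma^\mathbf{m}}\pi\in\Pi_\mathsf{A}$, which proves (1). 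The same chain of equalities identifies $\omega^{(\textup{Ad}_{\varsigma^\mathbf{m}}\pi)}=\sigma^{-\mathbf{m}}\omega^{(\pi)}$, and hence $\textup{Ad}_{\varsigma^\mathbf{m}}\pi^{(\omega)}=\pi^{(\sigma^{-\mathbf{m}}\omega)}$, giving (2); the shift-invariance of $\Omega_\mathsf{A}$ is immediate.

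For (3), I would invoke the homeomorphism $\Phi\colon\pi\mapsto\omega^{(\pi)}$ already recorded just before the proposition. Part (2) then reads $\Phi\circ\textup{Ad}_{\varsigma^\mathbf{m}}=\sigma^{-\mathbf{m}}\circ\Phi$, so $\Phi$ is equivariant after pre-composing with the automorphism $\mathbf{m}\mapsto-\mathbf{m}$ of $\mathbb{Z}^d$. Since this automorphism carries the shift $\mathbb{Z}^d$-action to an isomorphic copy of itself, $\Phi$ witnesses the asserted topological conjugacy.

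For (4), the assignment $\pi\mapsto\varsigma^\mathbf{b}\circ\pi$ carries $\Pi_\mathsf{A}$ into $\Pi_{\mathsf{A}+\mathbf{b}}$, since $\varsigma^\mathbf{b}(\pi(\mathbf{n}))-\mathbf{n}=\omega^{(\pi)}_\mathbf{n}+\mathbf{b}\in\mathsf{A}+\mathbf{b}$; its inverse is $\pi'\mapsto\varsigma^{-\mathbf{b}}\circ\pi'$, giving the set equality $\Pi_{\mathsf{A}+\mathbf{b}}=\varsigma^\mathbf{b}\Pi_\mathsf{A}$. On the configuration side this becomes $\omega^{(\varsigma^\mathbf{b}\circ\pi)}=\omega^{(\pi)}+(\dots,\mathbf{b},\mathbf{b},\mathbf{b},\dots)$, which is the second equality. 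Because adding a constant sequence is a shift-equivariant homeomorphism of $\mathsf{A}^{\mathbb{Z}^d}$ onto $(\mathsf{A}+\mathbf{b})^{\mathbb{Z}^d}$, it restricts to a topological conjugacy between $(\Omega_\mathsf{A},\sigma)$ and $(\Omega_{\mathsf{A}+\mathbf{b}},\sigma)$; composing with (3) transfers this conjugacy to the $\Pi$-side, completing the proof.
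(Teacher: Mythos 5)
Your proposal is correct and follows essentially the same route as the paper: the same direct computation $(\textup{Ad}_{\varsigma^\mathbf{m}}\pi)(\mathbf{n})=\pi(\mathbf{n}-\mathbf{m})+\mathbf{m}$ yields (1) and (2) simultaneously, and (3)--(4) are then read off from the correspondence $\pi\leftrightarrow\omega^{(\pi)}$ (the paper simply declares them obvious, while you spell out the sign-reversal and translation maps). No gaps.
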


	\begin{proof}
For every $\pi \in S^\infty (\mathbb{Z}^d)$ and $\mathbf{m}\in \mathbb{Z}^d$, the permutation $\textup{Ad}_{\varsigma ^\mathbf{m}}(\pi )$ satisfies that $\textup{Ad}_{\varsigma ^\mathbf{m}}(\pi )(\mathbf{n})=\pi (\mathbf{n}-\mathbf{m})+\mathbf{m}$. Hence $\textup{Ad}_{\varsigma ^\mathbf{m}}(\pi ^{(\omega )})(\mathbf{n})= \omega _{\mathbf{n}-\mathbf{m}} + \mathbf{n}-\mathbf{m} +\mathbf{m} = \omega _{\mathbf{n}-\mathbf{m}}+\mathbf{n} = (\sigma ^{-\mathbf{m}}\omega )_\mathbf{n} +\mathbf{n} = \pi ^{(\sigma ^{-\mathbf{m}}\omega )}(\mathbf{n})$. Since $\pi (\mathbf{n})-\mathbf{n}\in \mathsf{A}$ for every $\mathbf{n}\in \mathbb{Z}^d$ if and only if $\textup{Ad}_{\varsigma ^\mathbf{m}}(\pi )(\mathbf{n})-\mathbf{n}=\pi (\mathbf{n}-\mathbf{m})-\mathbf{n}+\mathbf{m}\in \mathsf{A}$ for every $\mathbf{m},\mathbf{n}\in \mathbb{Z}^d$, the set $\Pi _\mathsf{A}$ is $\textup{Ad}_\varsigma $-invariant. This proves (1) and (2), and (3) -- (4) are obvious.
	\end{proof}

In the Sections \ref{s:permutations} -- \ref{s:parity} we restrict our attention to the case where $d=1$ and $\mathsf{A}$ is a finite interval in $\mathbb{Z}$, which for simplicity we assume to be of the form $\mathsf{A}_K=\{0,\dots ,K\}$ with $K\ge1$ (cf. Proposition \ref{p:PiA} (4)). For notational economy we set
	\begin{equation}
	\label{eq:PiK}
\Pi _K\coloneqq \Pi _{\mathsf{A}_K},\enspace \textup{and}\enspace \Omega _K \coloneqq \Omega _{\mathsf{A}_K}\subset \mathsf{A}_K^\mathbb{Z}.
	\end{equation}
In the Sections 2 and 3 we prove the following results.
	\begin{itemize}\itemsep=5pt
	\item
For every $\omega \in \Omega _K$ there exists an integer $a(\omega )\in \mathsf{A}_K$ such that
	\begin{displaymath}
\Bigl|\sum\nolimits_{n=m}^{m+N-1}\omega _n-Na(\omega )\Bigr| < K^2
	\end{displaymath}
for every $m\in \mathbb{Z}$ and $N\ge1$ (\eqref{eq:cohomology} and Corollary \ref{c:average}). This integer can be viewed as the average `shift' of $\mathbb{Z}$ imparted by the permutation $\pi ^{(\omega )}\in S^\infty (\mathbb{Z})$.
	\item
The space $\Omega _K$ is a shift of finite type (abbreviated as \textit{SFT}) which is not irreducible. Its irreducible components are given by
	\begin{displaymath}
\Omega _{K,l} = \{\omega \in \Omega _K:a(\omega )=l\}, \enspace l\in \mathsf{A}_K,
	\end{displaymath}
(Proposition \ref{p:mixing}).
	\item
For $0\le l \le K$, the \textit{SFTs} $\Omega _{K,l}$ and $\Omega _{K,K-l}$ are topologically conjugate (Proposition \ref{p:comparisons}).
	\item
$|\Omega _{K,0}|=|\Omega _{K,K}| = 1$, and for $l=1,\dots ,K-1$, the topological entropy $h(\Omega _{K,l})$ satisfies that
	\begin{displaymath}
(1-\tfrac{l}{K})\log\,(l+1)\le h(\Omega _{K,l})\le \log \,(l+1)
	\end{displaymath}
(Theorem \ref{t:entropy}).
	\item
For $0\le l <\frac{K}{2}$, $h(\Omega _{K,l}) \le h(\Omega _{K,l+1})$ (Proposition \ref{p:maximal}).
	\end{itemize}

\smallskip In Section \ref{s:parity} we consider periodic points of the \textit{SFT} $\Omega _K$. If $\omega \in \Omega _K$ is periodic with period $p$, say, then the $p$-tuple
	\begin{displaymath}
\pi ^{(\omega )}_{(m,p)}\coloneqq\bigl(\pi ^{(\omega )}(m)\,(\textup{mod}\,p),\dots ,\pi ^{(\omega )}(m+p-1)\,(\textup{mod}\,p)\bigr)
	\end{displaymath}
is, for every $m\in \mathbb{Z}$, a permutation of $(0,\dots ,p-1)$. What is the parity (or sign) of this permutation? In Theorem \ref{t:parity} we prove that there exists a continuous cocycle $\mathsf{s}\colon \mathbb{Z}\times \Omega _K\longrightarrow \{\pm1\}$ which describes the parities of all these permutations. Together with the function $a\colon \Omega _K\longrightarrow \mathsf{A}_K$ in \eqref{eq:a}, this parity cocycle $\mathsf{s}$ can be used to express the determinants of certain circulant-like matrices appearing in entropy calculations for algebraic actions of the discrete Heisenberg group (cf. \cite[Section 8]{Lind+Schmidt} and Example \ref{e:circulant}).

In Section \ref{s:Z2} we consider permutations of $\mathbb{Z}^d$ with $d\ge2$ and prove the following results.

	\begin{itemize}
	\item
If $\mathsf{A}$ is a finite subset of $\mathbb{Z}^d$, $d\ge2$, then $\Omega _\mathsf{A}$ has positive entropy if and only if $|\mathsf{A}|\ge 3$ (Theorem \ref{t:posent}).
	\item
Let $d\ge2$, and let $\mathsf{A}\subset \mathbb{Z}^d$ be a finite subset. Then $\Omega _\mathsf{A}$ is topologically mixing if and only if $\mathsf{D} = \mathsf{A}-\mathsf{A}$ is not contained in a one-dimensional subspace of $\mathbb{R}^d$ (Theorem \ref{t:topmix}).
	\end{itemize}
Finally, in Section \ref{s:example}, we return to one of the simplest $\mathbb{Z}^2$-\textit{SFT's} arising from permutations of $\mathbb{Z}^2$ with restricted movement, the space $\Omega _\mathsf{A}$ with $\mathsf{A}=\{(0,0),(1,0),\linebreak[0](0,1)\}$. This space had appeared already in Example \ref{e:topmix2}, and we describe its dynamical properties (like entropy and the logarithmic growth-rate of the number of its periodic points) in greater detail.

\section{Permutations of $\mathbb{Z}$ with bounded movement}\label{s:permutations}

We set $d=1$. Fix $K\ge1$, put $\mathsf{A}_K=\{0,\dots ,K\}$, and write, as usual, $\sigma $ instead of $\sigma ^1$ for the shift $(\sigma \omega )_k=\omega _{k+1}$ on $\mathsf{A}^\mathbb{Z}$ (cf. Proposition \ref{p:PiA}). For $\omega =(\omega _n)_{n\in \mathbb{Z}}\in \mathsf{A}_K^\mathbb{Z}$ and $m\in \mathbb{Z}$ we put
	\begin{equation}
	\label{eq:omega-tilde}
\tilde\omega _m=\omega _m+m.
	\end{equation}

In the notation of Section \ref{s:Intro} we set $\Omega _K=\{\omega ^{(\pi )}:\pi \in \Pi _K\}\subset \mathsf{A}_K^\mathbb{Z}$. Then $\Omega _K$ is the subshift of $\mathsf{A}_K^\mathbb{Z}$ defined by the following condition: for every $\omega =(\omega _n)_{n\in \mathbb{Z}}\in \Omega _K$, the map $\pi ^{(\omega )}\colon n \to \tilde{\omega }_n,\;n\in \mathbb{Z}$, is a permutation of $\mathbb{Z}$.

For the following lemma we recall a basic definition: if $A$ is a finite alphabet and $L\ge1$, a subshift $\Omega \subset A^\mathbb{Z}$ is an $L$-step \textit{SFT} if there exists a set $F\subset A^{L+1}$ of \textit{forbidden words} such that $\Omega $ is the set of all sequences $\omega \in A^\mathbb{Z}$ not containing any of the words in $F$.

	\begin{lemm}
	\label{l:subshift}
For every $K\ge1$ the subshift $\Omega _K\subset \mathsf{A}_K^\mathbb{Z}$ has the following properties:
	\begin{enumerate}
	\item
Let $\omega \in \Omega _K$.
	\begin{enumerate}
	\item
For every $m,n\in \mathbb{Z}$, $m\ne n$, $\tilde\omega _m\ne\tilde\omega _n$.
	\item
For every $n\in \mathbb{Z}$, $n\in \{\tilde\omega _{n-K},\dots ,\tilde\omega _{n}\}$.
	\end{enumerate}
	\item
An element $\omega \in \mathsf{A}_K^\mathbb{Z}$ lies in $\Omega _K$ if and only if it satisfies the conditions \textup{(a)} and \textup{(b)} in \textup{(1)}: for every $n\in \mathbb{Z}$, the set $\{\tilde\omega _{n-K},\dots ,\tilde\omega _{n}\}$ has $K+1$ distinct elements and contains $n$. In particular, $\Omega _K$ is a $K$-step \textit{SFT}.
	\item
If $\omega \in \Omega _K$ is periodic with period $p$, then
	\begin{equation}
	\label{eq:permutation}
\pi ^{(\omega )}_{(m,p)}\coloneqq\bigl(\tilde{\omega }_m\,(\textup{mod}\,p),\dots ,\tilde{\omega }_{m+p-1}\,(\textup{mod}\,p)\bigr)
	\end{equation}
is, for every $m\in \mathbb{Z}$, a permutation of $(0,\dots ,p-1)$.
	\end{enumerate}
	\end{lemm}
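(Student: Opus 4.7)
The plan is to derive (1) directly from the bijectivity of $\pi^{(\omega)}$, to invert this for (2) and observe that the nominally global distinctness condition (a) is in fact local, and to obtain (3) by combining (1)(a) with the translation equivariance $\tilde\omega_{n+p}=\tilde\omega_n+p$ forced by periodicity. Concretely for (1), part (a) is immediate from $\tilde\omega_n=\pi^{(\omega)}(n)$ and the injectivity of the permutation $\pi^{(\omega)}$; for (b), surjectivity produces some $m\in\mathbb{Z}$ with $\tilde\omega_m=n$, equivalently $\omega_m=n-m$, and the constraint $\omega_m\in\{0,\dots,K\}$ then pins $m$ down to $\{n-K,\dots,n\}$.

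For (2), the forward implication is (1). Conversely, if $\omega\in\mathsf{A}_K^\mathbb{Z}$ satisfies (a) and (b), then $n\mapsto\tilde\omega_n$ is injective by (a), surjective by (b), and has increments in $\mathsf{A}_K$ by construction, so it defines an element of $\Pi_K$. To recognise $\Omega_K$ as a $K$-step SFT one then observes that the combined condition is local on windows of length $K+1$: condition (b) manifestly has this form, and for (a) the bound $\omega_n\in\mathsf{A}_K$ gives $|\tilde\omega_m-\tilde\omega_n|\ge|m-n|-K$, so distinctness holds automatically once $|m-n|>K$ and only pairs within a single window of $K+1$ consecutive indices need to be tested. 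The forbidden $(K+1)$-words are then those $(K+1)$-blocks violating one of these local constraints.

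For (3), periodicity $\omega_{n+p}=\omega_n$ yields $\tilde\omega_{n+p}=\tilde\omega_n+p$, so $\tilde\omega_n\bmod p$ depends on $n$ only through $n\bmod p$ and it suffices to treat $m=0$. The crux is to show that $\tilde\omega_0,\dots,\tilde\omega_{p-1}$ are pairwise incongruent modulo $p$. If $\tilde\omega_i\equiv\tilde\omega_j\pmod p$ with $0\le i<j\le p-1$, write $\tilde\omega_i=\tilde\omega_j+kp$; the equivariance gives $\tilde\omega_j+kp=\tilde\omega_{j+kp}$, so (1)(a) forces $i=j+kp$, and the constraint $0\le i,j\le p-1$ then collapses $k$ to zero, contradicting $i<j$.

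The only step that requires any real thought is the observation in (2) that condition (a) is automatically local, owing to the boundedness of the alphabet $\mathsf{A}_K$; once this is in hand, parts (1) and (3) reduce to short formal manipulations of bijectivity and period-$p$ shift equivariance.
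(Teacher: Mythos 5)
Your proof is correct and is exactly the routine verification the paper has in mind --- the authors simply declare the lemma ``Obvious'' and omit all details. You supply them accurately, including the one point that genuinely deserves a sentence, namely that condition (a) is automatically local because $|\tilde\omega_m-\tilde\omega_n|\ge |m-n|-K>0$ once $|m-n|>K$, which is what makes $\Omega_K$ a $K$-step SFT.
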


	\begin{proof}
Obvious.
	\end{proof}

	\begin{exam}
	\label{e:sft}
If $K=1$, $\Omega _1=\{(\dots 0,0,0,\dots ), (\dots ,1,1,1,\dots )\}$. For $K=2$, $\Omega _2$ is the union of the fixed points $(\dots 0,0,0,\dots )$, $(\dots ,2,2,2,\dots )$, and the mixing \textit{SFT} determined by the directed graph
	\begin{center}
\scalebox{.6}{
	\begin{tikzpicture}[->,>=stealth',shorten >=2pt,auto,node distance=25mm, inner sep=2mm, semithick]
\tikzstyle{every state}=[minimum size = 5mm, fill=white,draw=black,text=black] \node[state] (A) {$1$}; \node[state] (B) [below right of=A] {$2$}; \node[state] (C) [below left of=A] {$0$}; \path (A) edge [loop] node {} (A) (A) edge node {} (B) (B) edge node {} (C) (C) edge node {} (A) (C) (C) edge [bend right] node {} (B);
	\end{tikzpicture}}
\vspace{-2mm}
	\end{center}
	\end{exam}

More generally, the following is true.

	\begin{lemm}
	\label{l:components}
\textup{(1)} For every $K\ge1$, the \textit{SFT} $\Omega _K$ is $K$-step, but not \textup{($K-1$)}-step.

\smallskip\textup{(2)} For every $K\ge1$ and $\omega \in \Omega _K$ we put
	\begin{equation}
	\label{eq:a}
a(\omega )=|\{k>0:\tilde{\omega }_{-k}\ge 0\}| = |\{k=1,\dots ,K:\tilde{\omega }_{-k}\ge 0\}|.
	\end{equation}
Then
	\begin{equation}
	\label{eq:subshift}
\Omega _{K,l}=\{\omega \in \Omega _K:a(\omega )=l\}
	\end{equation}
is, for every $l=0,\dots ,K$, a closed, shift-invariant subset of $\Omega _K$ which is a \textup{($K$-1)}-step \textit{SFT}.

\smallskip\textup{(3)} The map $a\colon \Omega _K\longrightarrow \mathsf{A}_K$ defined by \eqref{eq:a} is continuous.
 	\end{lemm}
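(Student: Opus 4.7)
Part~(3) is the easiest: the second formula in~\eqref{eq:a} shows that $a$ depends only on the finite block $(\omega _{-K},\dots ,\omega _{-1})$, hence $a$ is locally constant on $\Omega _K$ and in particular continuous. The equivalence of the two expressions in~\eqref{eq:a} is immediate from $\omega _{-k}\in \{0,\dots ,K\}$, which forces $\omega _{-k}-k\ge 0$ to imply $k\le K$. Continuity of $a$ then gives that $\Omega _{K,l}=a^{-1}\{l\}$ is closed, which is the first half of~(2).

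For the $\sigma$-invariance of $\Omega _{K,l}$ in~(2), the plan is to prove the stronger statement that $a$ itself is shift-invariant by a ``flux'' argument. I introduce $F_m(\omega )\coloneqq |\{n<m:\pi ^{(\omega )}(n)\ge m\}|$, so that $a(\omega )=F_0(\omega )$. The key step is to verify $F_{m+1}(\omega )=F_m(\omega )$ for every $m\in \mathbb{Z}$, which reduces by telescoping to the equivalence $\pi ^{(\omega )}(m)>m$ if and only if $(\pi ^{(\omega )})^{-1}(m)<m$; this equivalence follows from $\pi ^{(\omega )}(n)\ge n$ for every $n$ together with bijectivity of $\pi ^{(\omega )}$. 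Combined with $\pi ^{(\sigma \omega )}(n)=\pi ^{(\omega )}(n+1)-1$ from Proposition~\ref{p:PiA}\,(2) and a change of variables, this yields $a(\sigma \omega )=F_1(\omega )=F_0(\omega )=a(\omega )$.

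The technical core is the $(K-1)$-step \textit{SFT} claim in~(2). I define $\widetilde \Omega _{K,l}\subset \mathsf{A}_K^\mathbb{Z}$ as the $(K-1)$-step \textit{SFT} whose admissible length-$K$ windows are precisely the words $(b_0,\dots ,b_{K-1})$ for which the tilde values $c_i\coloneqq b_i+i$ ($i=0,\dots ,K-1$) are pairwise distinct \emph{and} satisfy $|\{i:c_i\ge K\}|=l$. The inclusion $\Omega _{K,l}\subset \widetilde \Omega _{K,l}$ is immediate from Lemma~\ref{l:subshift}\,(2) together with shift-invariance of $a$. For the opposite inclusion, given a length-$(K+1)$ window $(b_0,\dots ,b_K)$ both of whose length-$K$ subwords lie in the allowed set, I must verify the full $\Omega _K$ admissibility, namely that the values $c_i\coloneqq b_i+i$ ($i=0,\dots ,K$) are pairwise distinct and $K\in \{c_0,\dots ,c_K\}$. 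Distinctness reduces to ruling out $c_0=c_K$, which by range considerations can only occur if $b_0=K$ and $b_K=0$. Both this bad case and the possibility $K\notin \{c_0,\dots ,c_K\}$ are eliminated by subtracting the two count conditions $|\{0\le i\le K-1:c_i\ge K\}|=l=|\{1\le i\le K:c_i\ge K+1\}|$: the resulting compatibility relation expresses the quantity $|\{1\le i\le K-1:c_i=K\}|$ in terms of whether $c_0\ge K$ and $c_K\ge K+1$, and each of the two bad scenarios forces a concrete numerical contradiction. This short counting step is the main obstacle of the proof.

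Finally, for~(1) the $K$-step property is Lemma~\ref{l:subshift}\,(2). To show that $\Omega _K$ is \emph{not} $(K-1)$-step, I exhibit a bi-infinite $\omega \in \mathsf{A}_K^\mathbb{Z}\setminus \Omega _K$ whose every length-$K$ subword is admissible in $\Omega _K$; this places $\omega $ in the $(K-1)$-step \textit{SFT} generated by the length-$K$ forbidden words of $\Omega _K$, so that \textit{SFT} strictly contains $\Omega _K$. The concrete candidate is $\omega _n=0$ for $n<K$ and $\omega _n=1$ for $n\ge K$: the length-$(K+1)$ window at $n=K$ has tilde values $\{0,1,\dots ,K-1,K+1\}$, missing $K$, so $\omega \notin \Omega _K$; meanwhile every length-$K$ subword has the form $0^{K-j}1^{j}$ for some $0\le j\le K$ and is straightforwardly realised as a window of an explicit element of $\Omega _K$, obtained by inserting a single entry $\omega _{-1}=K-j+1$ (to supply the missing tilde value $K-j$) together with constant blocks of $1$'s on either side.
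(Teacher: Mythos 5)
Your proof is correct, and at each of the three substantive points it takes a route that differs from (and is generally more explicit than) the paper's. For the shift-invariance of $a$ the paper counts intersections of $S(\omega )$ with the regions $A,B,\dots $ of Figure~\ref{f:figure1} to get $|\tilde{A}|=|\tilde{A}'|$; your flux identity $F_{m+1}(\omega )=F_m(\omega )$, which reduces to the equivalence ``$\pi ^{(\omega )}(m)>m$ iff $(\pi ^{(\omega )})^{-1}(m)<m$'' (valid because $\pi ^{(\omega )}(n)\ge n$ for all $n$), is the same conservation law with the geometry stripped out, and is self-contained. For the $(K-1)$-step property of $\Omega _{K,l}$ the paper writes down the length-$K$ window conditions \eqref{eq:1}--\eqref{eq:2} and simply asserts the conclusion; the nontrivial half of that assertion --- that a sequence in $\mathsf{A}_K^{\mathbb{Z}}$ all of whose length-$K$ windows are admissible already satisfies the full length-$(K+1)$ condition of Lemma~\ref{l:subshift}\,(2) --- is precisely your counting step, whose upshot is the identity $\sum_{i=0}^{K}[c_i=K]=1$; this rules out both $c_0=c_K$ (which range considerations force to mean $c_0=c_K=K$) and $K\notin \{c_0,\dots ,c_K\}$ in one stroke, so your version supplies a verification the paper leaves to the reader. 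For ``not $(K-1)$-step'' the paper glues the left tail of the zero fixed point to a point of $\Omega _{K,1}$ and derives a contradiction with the shift-invariance of $a$; you instead exhibit the glued sequence directly (zeros for $n<K$, ones for $n\ge K$), check that it misses the value $K$ and hence is not a permutation, and realise each subword $0^{K-j}1^{j}$ inside a genuine point of $\Omega _K$. One cosmetic remark there: your recipe ``insert $\omega _{-1}=K-j+1$'' produces the out-of-range symbol $K+1$ when $j=0$; that case is the all-zero word, realised trivially by the zero fixed point, so nothing is lost. Both routes are valid; yours is longer but closes the small gaps that the paper's proof elides.
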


For the proofs of Lemma \ref{l:components} and Theorem \ref{t:parity}, Figure~\ref{f:figure1} will be useful, where we assume that $p>K \ge 2$.

	\begin{figure}[ht]
\includegraphics[width=80mm]{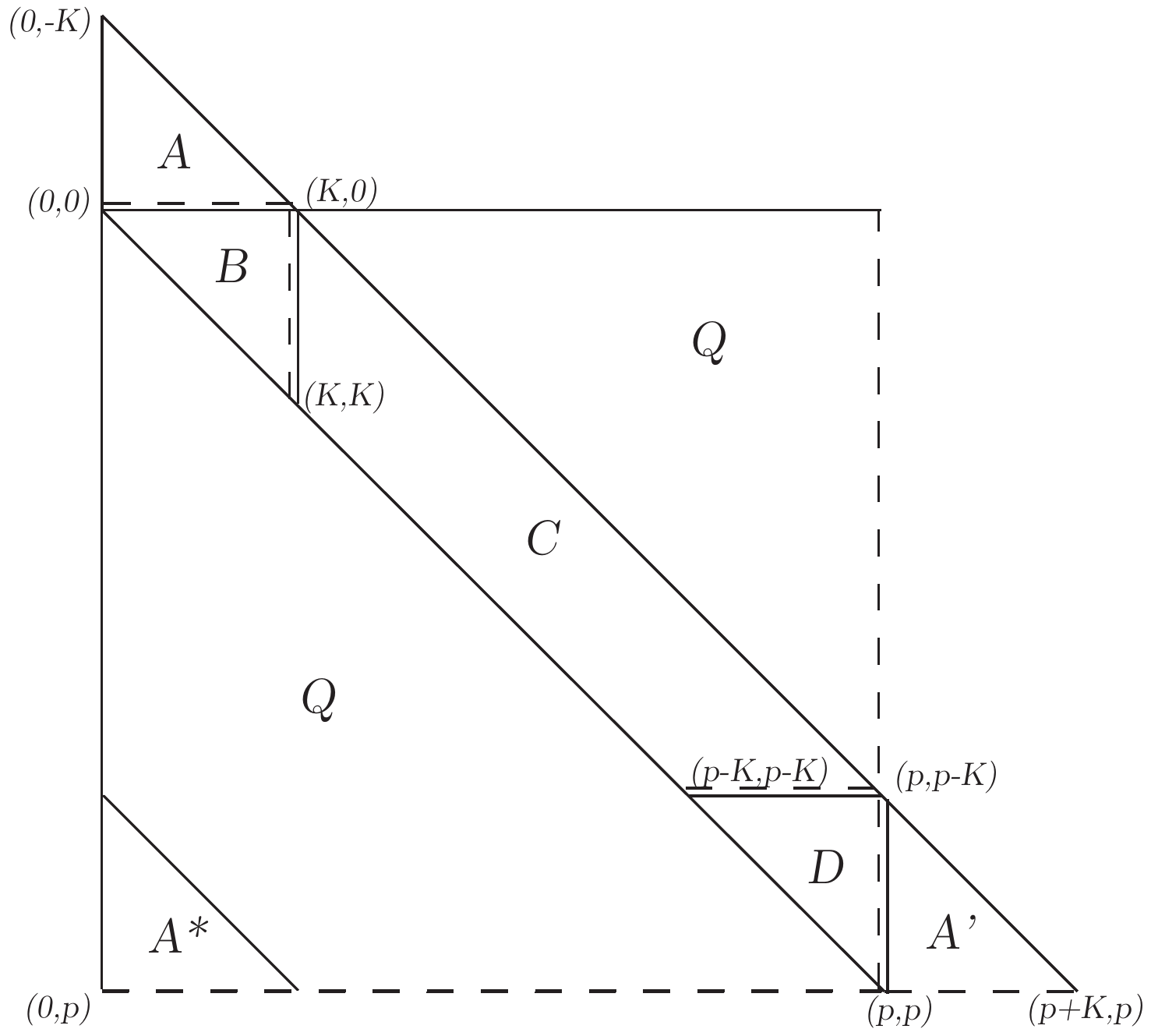}\vspace{-5mm}
	\caption{}\label{f:figure1}
	\end{figure}

Consider the following subsets of $\mathbb{Z}^2$ illustrated in Figure~\ref{f:figure1} (note that the first coordinate in this picture increases as one moves to the right, while the second increases as one moves down):
	\begin{align*}
&\textup{the square} &Q&=\{(k_1,k_2):0\le k_i<p\},
	\\
&\textup{the triangles} & A&=\{(k_1,k_2):0\le k_1 < K, k_1-K \le k_2 < 0\},
	\\
&&B&=\{(k_1,k_2):0\le k_1 < K, 0\le k_2 \le k_1\},
	\\
&&D&=\{(k_1,k_2):p-K\le k_1 < p, p-K\le k_2 \le k_1\},
	\\
&&A'&=\{(k_1,k_2):p\le k_1 < p+K, k_1-K\le k_2 < p\},
	\\
&&A^*&=\{(k_1,k_2):0\le k_1 < K, p-K+k_1\le k_2 < p\},
	\\
&\textup{the trapezoid}&C\hphantom{'}&=\{(k_1,k_2):K\le k_1 <p, k_1-K \le k_2 \le \min \,(k_1, p-K-1)\}.
	\end{align*}
We fix $\omega \in \Omega _K$, set
	\begin{equation}
	\label{eq:S}
S(\omega )=\{(\tilde{\omega }_k,k):k\in \mathbb{Z}\}\subset \mathbb{Z}^2,
	\end{equation}
and denote by $\tilde{A}=A\cap S(\omega )$, $\tilde{B}=B\cap S(\omega )$, \dots , $\tilde{A'}=A'\cap S(\omega )$, $\tilde{C}=C\cap S(\omega )$, and $\tilde{Q}=Q\cap S(\omega )=\tilde{B}\cup \tilde{C}\cup \tilde{D}$ the intersections of these sets with $S(\omega )$ (note that $A^*\cap S(\omega )=\varnothing $).

	\begin{proof}[Proof of Lemma \ref{l:components}]
We start by proving the first assertion in (2). Fix $\omega \in \Omega _K$, define $S(\omega )$ by \eqref{eq:S}, and consider the sets $\tilde{Q}, \tilde{A},\dots ,\tilde{A}',\linebreak[0]\tilde{C}$ defined above. According to the definition of $\Omega _K$, $\{(m,n):n\in \mathbb{Z}\}\cap S(\omega )= \{(m,m-k):0\le k \le K\}\cap S(\omega ) = \{(m,\tilde{\omega }_m)\}$ for every $m\in \mathbb{Z}$. Similarly, $|\{(m,n):m\in \mathbb{Z}\}\cap S(\omega )|= \{(n+k,n):0\le k\le K\}\cap S(\omega ) | = 1$ for every $n\in \mathbb{Z}$. It follows that $|\tilde{A}\cup \tilde{B}| = |\tilde{D}\cup \tilde{A'}| =K$ and $|\tilde{A} \cup \tilde{Q}| = |\tilde{Q}\cup \tilde{A'}|= p$. Since $|\tilde{A}|=a(\omega )$ (cf. \eqref{eq:a}), we obtain that $a(\omega )=|\tilde{A}|=|\tilde{A'}|=a(\sigma ^p\omega )$.

As $p>K$ is arbitrary, $a(\omega )=a(\sigma \omega )$ for every $\omega \in \Omega _K$, which proves that the sets $\Omega _{K,l},\,l=0,\dots ,K$, are shift-invariant.

In order to prove (1) we observe that any $\omega \in \Omega _K$ with $\omega _{-K}=\dots =\omega _{-1} = 0$ satisfies that $a(\omega )=0$ and thus lies in $\Omega _{K,0}$. A point $\omega '\in \Omega _K$ with $\omega _{-K+1}=\dots =\omega _{-1}=0$ and $\omega _0=1$ satisfies $a(\sigma \omega ')=1$ and hence $\omega '\in \Omega _{K,1}$. If $\Omega _K$ were ($K$-1)-step, there would exist a point $\omega ''\in \Omega _K$ with $\omega ''_{-K}=\dots =\omega ''_{-1}=0$ and $\omega ''_0=1$, and hence with $0=a(\omega '')\ne a(\sigma \omega '')=1$. This would contradict the shift-invariance of $a(\cdot )$ shown above.

Having verified (1), we return to (2) by showing that each $\Omega _{K,l}$ is ($K$-1)-step (it is obviously $K$-step). If $l=0$ or $l=K$, $\Omega _{K,l}$ consists of a single fixed point and is therefore $K$-step. If $0<l<K$ we observe that a point $\omega \in \Omega _K$ lies in $\Omega _{K,l}$ if and only if, for every $n\in \mathbb{Z}$, either
	\begin{equation}
	\label{eq:1}\vspace{-4mm}
|\{k=1,\dots ,K-1:\omega _{n-k}>k\}|=l\enspace \textup{and}\enspace \omega _n=0,
	\end{equation}
or
	\begin{equation}
	\label{eq:2}
	\begin{aligned}
|\{k=1,\dots ,K-1&:\omega _{n-k}>k\}|=l-1,\, \omega _n>0,\enspace \textup{and}
	\\
&\qquad \qquad  \; \omega _n\ne \omega _{n-k}-k\enspace \textup{for}\enspace k=1,\dots ,K-1.
	\end{aligned}
	\end{equation}
This proves that $\Omega _{K,l}$ is ($K$-1)-step.

Finally we note that the continuity of the map $a\colon \Omega _K\longrightarrow \mathsf{A}$ claimed in (3) is an immediate consequence of \eqref{eq:a}.
	\end{proof}

We will re-code $\Omega _{K,l}$ as a one-step \textit{SFT} $X_{K,l}$ with alphabet
	\begin{equation}
	\label{eq:SKl}
\mathsf{B}_{K,l} = \bigl\{\mathsf{a}\subset \mathsf{A}_{K-1}:|\mathsf{a}|=l\bigr\},
	\end{equation}
the set of all $l$-element subsets of $\mathsf{A}_{K-1}=\{0,\dots ,K-1\}$. For visualising $X_{K,l}$ we adapt a simile from \cite[Section 2.1]{Redig}.

	\begin{defi}[The \textit{SFT} $X_{K,l}$ -- a strange office]
	\label{d:office}
Consider an office with $K$ desks, numbered from $0$ to $K-1$, and arranged side by side. At each desk there is a clerk who can handle at most one file at any given time. At regular intervals each clerk checks his desk. If he finds a file there he passes it on to his neighbour on the left. If the clerk sitting at the leftmost desk (the desk $0$) finds a file on his desk he carries it over to one of the empty desks, chosen at random (like everybody else he may simultaneously receive a file from his neighbour on the right).

The \textit{SFT} $X_{K,l}$ corresponds to the \textit{modus operandi} of this office if there are a total of $l$ files in circulation.

\smallskip To make this description more formal we view $\mathsf{B}_{K,l}$ as the set of possible positions of $l$ files on the $K$ desks and define a $(\mathsf{B}_{K,l}\negthinspace\times \negthinspace\mathsf{B}_{K,l})$-matrix $\mathsf{M}=\mathsf{M}_{K,l}$ with entries in $\{0,1\}$ by setting, for all $\mathsf{a},\mathsf{b}\in \mathsf{B}_{K,l}$, $\mathsf{M}(\mathsf{a},\mathsf{b})=1$ if and only if one of the following conditions is satisfied:
	\begin{itemize}
	\item[(M1)]
$0\notin \mathsf{a}$ and $\mathsf{b}=\mathsf{a}-1\coloneqq\{a-1:a\in \mathsf{a}\}$ (i.e., there is no file on desk $0$, and each file is moved one step to the left),\label{M1}
	\item[(M2)]
$0\in \mathsf{a}$ and $\mathsf{b}=(\mathsf{a}'-1)\cup \{j\}$ for some $j\in \mathsf{A}_{K-1}\smallsetminus (\mathsf{a}'-1)$, where $\mathsf{a}'=\mathsf{a}\smallsetminus \{0\}$ (i.e., there is a file on desk $0$ which gets dropped on the floor; then all the other files are moved one position to the left, and the file on the floor is placed on an empty desk).\label{M2}
	\end{itemize}
We shall prove that $\Omega _{K,l}$ is conjugate to the \textit{SFT}
	\begin{equation}
	\label{eq:XKl}
X_{K,l}=\bigl\{(\mathsf{a}_n)_{n\in \mathbb{Z}}\in \mathsf{B}_{K,l}^\mathbb{Z}:\mathsf{M}(\mathsf{a}_n,\mathsf{a}_{n+1})=1\enspace \textup{for every}\enspace n\in \mathbb{Z}\bigr\}
	\end{equation}
defined by the transition matrix $\mathsf{M}$.
	\end{defi}

	\begin{prop}
	\label{p:XKl}
Let $\phi _{K,l}\colon \Omega _{K,l}\longrightarrow \mathsf{B}_{K,l}^\mathbb{Z}$ be the map given by
	\begin{equation}
	\label{eq:phiKl}
\phi _{K,l}(\omega )_n = \pi _1(A\cap S(\sigma ^n\omega ))
	\end{equation}
for every $\omega \in \Omega _{K,l}$ and $n\in \mathbb{Z}$, where $A\subset \mathbb{Z}^2$ is the triangle appearing in \textup{Figure 1}~\vpageref{f:figure1}, $S(\sigma ^n\omega )$ is defined in \eqref{eq:S}, and $\pi _1\colon \mathbb{Z}^2\longrightarrow \mathbb{Z}$ is the first coordinate projection. This map has the following properties.
	\begin{enumerate}
	\item
$\phi _{K,l}$ is shift-equivariant and injective.
	\item
$\phi _{K,l}(\Omega _{K,l})=X_{K,l}$.
	\end{enumerate}
	\end{prop}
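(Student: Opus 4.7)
The plan is to unpack the definition of $\phi_{K,l}$, derive a calculus for the transition $\phi_{K,l}(\omega)_n \to \phi_{K,l}(\omega)_{n+1}$, and then invert it. A direct reading of the triangle $A$ together with the relation $\tilde{(\sigma^n\omega)}_k = \tilde\omega_{k+n} - n$ gives
\[
\phi_{K,l}(\omega)_n = \{\tilde\omega_j - n : n-K \le j \le n-1,\ \tilde\omega_j \ge n\} \subseteq \mathsf{A}_{K-1},
\]
whose cardinality equals $a(\sigma^n\omega) = a(\omega) = l$ by Lemma \ref{l:components}(2); hence $\phi_{K,l}(\omega)_n \in \mathsf{B}_{K,l}$, and shift-equivariance is immediate from the formula. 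Writing $\mathsf{a}_n = \phi_{K,l}(\omega)_n$ and comparing the index ranges at consecutive times yields the transition identity
\[
\mathsf{a}_{n+1} = ((\mathsf{a}_n\setminus\{0\}) - 1) \cup (\{\omega_n - 1\}\cap \mathsf{A}_{K-1}),
\]
while the fact that $\pi^{(\omega)}$ is a permutation forces $0\in \mathsf{a}_n$ if and only if $\omega_n > 0$. The cases $\omega_n = 0$ and $\omega_n \ge 1$ thus reproduce rules (M1) and (M2) of Definition \ref{d:office} exactly; injectivity of $\pi^{(\omega)}$ at $n$ also precludes any collision of $\omega_n - 1$ with $(\mathsf{a}_n\setminus\{0\}) - 1$ in the (M2) case. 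This proves $\phi_{K,l}(\Omega_{K,l})\subseteq X_{K,l}$ and yields the \emph{inversion formula}: $\omega_n = 0$ when $0\notin \mathsf{a}_n$, and otherwise $\omega_n = 1 + j$ where $j$ is the unique element of $\mathsf{a}_{n+1}\setminus((\mathsf{a}_n\setminus\{0\}) - 1)$. Applied to $(\mathsf{a}_n) = \phi_{K,l}(\omega)$ this recovers $\omega$ from its image, proving (1).

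For the reverse inclusion $X_{K,l}\subseteq \phi_{K,l}(\Omega_{K,l})$ I would fix an arbitrary $(\mathsf{a}_n)\in X_{K,l}$, define $\omega\in \mathsf{A}_K^{\mathbb{Z}}$ by the inversion formula, and verify both that $\omega\in \Omega_{K,l}$ and that $\phi_{K,l}(\omega) = (\mathsf{a}_n)$. The main obstacle is showing that $\pi^{(\omega)}$ is a bijection of $\mathbb{Z}$. For injectivity, suppose $\tilde\omega_m = \tilde\omega_{m'} = n$ with $m < m' \le n$; since $\omega_m = n - m > 0$, rule (M2) at step $m$ places $n - m - 1 \in \mathsf{a}_{m+1}$, and an induction on $t = m+1,\dots,n$ (using that $n - t > 0$ implies $n - t \in \mathsf{a}_t\setminus\{0\}$ is carried forward to $n - t - 1 \in \mathsf{a}_{t+1}$) shows $n - t \in \mathsf{a}_t$ throughout. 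If $m' = n$ the endpoint $0\in\mathsf{a}_n$ contradicts $\omega_n = 0$; if $m' < n$ then $n - m' \in \mathsf{a}_{m'}\setminus\{0\}$, so $\omega_{m'} - 1 = n - m' - 1 \in (\mathsf{a}_{m'}\setminus\{0\}) - 1$, contradicting the (M2) constraint defining $\omega_{m'}$. For surjectivity, given $n$ either $\omega_n = 0$ and $\tilde\omega_n = n$, or $0\in \mathsf{a}_n$; in the latter case a backward trace of this ``$0$'' through (M1)/(M2) (raising its position by one at each step back) must terminate within $K$ steps, producing some $k\in\{1,\dots,K\}$ with $\omega_{n-k} = k$ and hence $\tilde\omega_{n-k} = n$.

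Once $\omega\in \Omega_K$ is established, it remains to verify $\phi_{K,l}(\omega) = (\mathsf{a}_n)$. A symmetric trace pairs each $j\in\mathsf{a}_n$ with its origin: following the positions $j, j+1, j+2, \dots$ backward in time via (M1)/(M2) until a step $n - k'$ at which the file was newly introduced by (M2), one reads off $\omega_{n-k'} = j + k'$ and so $\tilde\omega_{n-k'} = n + j$, placing $j\in \phi_{K,l}(\omega)_n$; the reverse inclusion is proved by a forward trace from the newly-created file. Therefore $\phi_{K,l}(\omega) = (\mathsf{a}_n)$, and $a(\omega) = |\mathsf{a}_0| = l$, so $\omega\in \Omega_{K,l}$, completing the proof of (2).
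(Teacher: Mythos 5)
Your proof is correct and follows essentially the same route as the paper's: both hinge on the inversion formula --- $\omega _n=0$ if $0\notin \mathsf{a}_n$, and otherwise $\omega _n$ is the unique element of $(\mathsf{a}_{n+1}+1)\smallsetminus \mathsf{a}_n$ --- which is exactly \eqref{eq:iso1}--\eqref{eq:iso2}. The only difference is one of detail: the paper simply asserts that applying this formula to an arbitrary $(\mathsf{a}_n)_{n\in \mathbb{Z}}\in X_{K,l}$ yields a point of $\Omega _{K,l}$ mapping back to $(\mathsf{a}_n)$, whereas your backward/forward file-tracing arguments actually verify that $n\mapsto \tilde{\omega }_n$ is a bijection of $\mathbb{Z}$ and that $\phi _{K,l}(\omega )=(\mathsf{a}_n)_{n\in \mathbb{Z}}$, supplying the verification the paper leaves implicit.
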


	\begin{proof}
The shift-equivariance of $\phi _{K,l}$ is obvious from \eqref{eq:phiKl}. The injectivity of $\phi _{K,l}$ follows from \eqref{eq:1} -- \eqref{eq:2}: for every $\omega \in \Omega _{K,l}$ and $n\in \mathbb{Z}$,
	\begin{equation}
	\label{eq:iso1}
\omega _n =0 \enspace \textup{if}\enspace 0\notin \phi _{K,l}(\omega )_n,\quad \textup{and}\quad \omega _n\in (\phi _{K,l}(\omega )_{n+1}+1)\smallsetminus \phi _{K,l}(\omega )_n\enspace \textup{otherwise}.
	\end{equation}
Since this holds for every $n\in \mathbb{Z}$, the sequence $\phi _{K,l}(\omega )$ determines $\omega $, which proves (1).

(2) If a sequence $\omega =(\omega _n)_{n\in \mathbb{Z}}\in \mathsf{A}_K^\mathbb{Z}$ satisfies the conditions \eqref{eq:1} -- \eqref{eq:2} for every $n\in \mathbb{Z}$, then the sets $\phi _{K,l}(\omega )_n,n\in \mathbb{Z}$, satisfy (M1) -- (M2). This shows that $\phi _{K,l}(\Omega _{K,l})\subset X_{K,l}$. Conversely, if $(\mathsf{a}_n)_{n\in \mathbb{Z}}\in X_{K,l}$, and if we set
	\begin{equation}
	\label{eq:iso2}
\omega _n =0 \enspace \textup{if}\enspace 0\notin \mathsf{a}_n,\quad \textup{and}\quad \omega _n\in (\mathsf{a}_{n+1}+1)\smallsetminus \mathsf{a}_n\enspace \textup{otherwise}
	\end{equation}
for every $n\in \mathbb{Z}$, we obtain an element $\omega =(\omega _n)\in \Omega _{K,l}$ with $\phi _{K,l}(\omega )=(\mathsf{a}_n)_{n\in \mathbb{Z}}$. This proves (2).
	\end{proof}

	\begin{prop}
	\label{p:follower-predecessor}
Let\/ $\mathsf{M}$ be the transition matrix of $X_{K,l}$ in \textup{(M1) -- (M2)}. For every state $\mathsf{a}\in\mathsf{B}_{K,l}$ of $X_{K,l}$, the \textit{follower set} $\mathbf{f}(\mathsf{a})=\{\mathsf{b}\in \mathsf{B}_{K,l}:\mathsf{M}(\mathsf{a},\mathsf{b})=1\}$ of $\mathsf{a}$ is given by
	\begin{equation}
	\label{eq:fa1}
\mathbf{f}(\mathsf{a}) = \{\mathsf{b}\in \mathsf{B}_{K,l}: \mathsf{b} \supset (\mathsf{a}-1) \cap A_{K-1}\}
	\end{equation}
and satisfies that
	\begin{equation}
	\label{eq:fa2}
\smash[t]{|\mathbf{f}(\mathsf{a})|=
	\begin{cases}
K-l+1 &\textup{if}\enspace 0\in \mathsf{a},
	\\
1&\textup{otherwise}.
	\end{cases}}
	\end{equation}
The \textit{predecessor set} $\mathbf{p}(\mathsf{a})=\{\mathsf{b}\in \mathsf{B}_{K,l}:\mathsf{M}(\mathsf{b},\mathsf{a})=1\}$ of $\mathsf{a}$ is given by
	\begin{equation}
	\label{eq:pa1}
\mathbf{p}(\mathsf{a})=\{\mathsf{b}\in \mathsf{B}_{K,l}:\mathsf{b}\subset \{0\} \cup (\mathsf{a}+1)\},
	\end{equation}
and satisfies that
	\begin{equation}
	\label{eq:pa2}
\smash[t]{|\mathbf{p}(\mathsf{a})|=
	\begin{cases}
1 &\textup{if}\enspace (K-1) \in \mathsf{a},
	\\
l+1&\textup{otherwise}.
	\end{cases}}
	\end{equation}
	\end{prop}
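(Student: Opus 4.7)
The plan is to derive \eqref{eq:fa1}--\eqref{eq:fa2} and \eqref{eq:pa1}--\eqref{eq:pa2} by direct case analyses, splitting the follower count according to whether $0\in \mathsf{a}$ and the predecessor count according to whether $0\in \mathsf{b}$, so that in each branch exactly one of (M1), (M2) applies.

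For \eqref{eq:fa1}--\eqref{eq:fa2}, first treat the case $0\notin \mathsf{a}$. Here (M2) is vacuous and (M1) produces the single follower $\mathsf{a}-1\in \mathsf{B}_{K,l}$; since $\mathsf{a}\subset \{1,\dots,K-1\}$ the set $(\mathsf{a}-1)\cap \mathsf{A}_{K-1}=\mathsf{a}-1$ already has cardinality $l$, so the only $l$-subset of $\mathsf{A}_{K-1}$ containing it is $\mathsf{a}-1$ itself, which matches \eqref{eq:fa1} and gives $|\mathbf{f}(\mathsf{a})|=1$. When $0\in \mathsf{a}$, set $\mathsf{a}'=\mathsf{a}\smallsetminus \{0\}$; only (M2) applies, and $(\mathsf{a}-1)\cap \mathsf{A}_{K-1}=\mathsf{a}'-1$ has cardinality $l-1$. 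The followers are exactly the sets $(\mathsf{a}'-1)\cup \{j\}$ with $j\in \mathsf{A}_{K-1}\smallsetminus (\mathsf{a}'-1)$, which is precisely the collection of $l$-subsets of $\mathsf{A}_{K-1}$ containing $\mathsf{a}'-1$. The number of admissible $j$ equals $|\mathsf{A}_{K-1}|-|\mathsf{a}'-1|=K-l+1$, giving \eqref{eq:fa2}.

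For \eqref{eq:pa1}--\eqref{eq:pa2}, I split on whether $0\in \mathsf{b}$. If $0\notin \mathsf{b}$, rule (M1) forces $\mathsf{b}=\mathsf{a}+1$, and $\mathsf{b}\in \mathsf{B}_{K,l}$ is possible iff $K-1\notin \mathsf{a}$; this $\mathsf{b}$ clearly lies in $\{0\}\cup (\mathsf{a}+1)$. If $0\in \mathsf{b}$, put $\mathsf{b}'=\mathsf{b}\smallsetminus \{0\}$; (M2) requires $\mathsf{a}=(\mathsf{b}'-1)\cup \{j\}$ for some $j\in \mathsf{A}_{K-1}\smallsetminus (\mathsf{b}'-1)$, which is equivalent to $\mathsf{b}'-1$ being an $(l-1)$-subset of $\mathsf{a}$, i.e.\ $\mathsf{b}'\subset \mathsf{a}+1$. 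So every predecessor satisfies $\mathsf{b}\subset \{0\}\cup (\mathsf{a}+1)$, and conversely every $\mathsf{b}\in \mathsf{B}_{K,l}$ with $\mathsf{b}\subset \{0\}\cup (\mathsf{a}+1)$ arises in one of the two ways, proving \eqref{eq:pa1}. For the count I would use that $\{0\}\cup ((\mathsf{a}+1)\cap \mathsf{A}_{K-1})$ has $l+1$ elements when $K-1\notin \mathsf{a}$ (yielding $\binom{l+1}{l}=l+1$ subsets of size $l$), and only $l$ elements when $K-1\in \mathsf{a}$ (since then $K\notin \mathsf{A}_{K-1}$ is lost), yielding the unique predecessor obtained from (M2) by choosing $j=K-1$.

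There is no real obstacle here; the argument is a bookkeeping exercise once the dichotomy $0\in \mathsf{a}$ vs.\ $0\notin \mathsf{a}$ (for followers) and $0\in \mathsf{b}$ vs.\ $0\notin \mathsf{b}$ (for predecessors) is introduced. The only point requiring care is that the subset characterisations in \eqref{eq:fa1} and \eqref{eq:pa1} implicitly use $\mathsf{b}\subset \mathsf{A}_{K-1}$, so one must keep track of the shifted elements ($-1$ on the follower side, $+1$ on the predecessor side) that fall outside $\mathsf{A}_{K-1}$; this is exactly what produces the dichotomies at $0\in \mathsf{a}$ and $K-1\in \mathsf{a}$ in \eqref{eq:fa2} and \eqref{eq:pa2}.
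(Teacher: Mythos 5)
Your proof is correct and follows essentially the same route as the paper: a direct case analysis of (M1)--(M2), splitting followers on whether $0\in\mathsf{a}$ and predecessors on the dichotomy at $K-1$ (which you reach via the equivalent split on whether $0$ lies in the predecessor). If anything, your version is slightly more complete, since you explicitly verify the set descriptions \eqref{eq:fa1} and \eqref{eq:pa1} rather than only the cardinalities.
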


	\begin{proof}
Let $\mathsf{a}\in \mathsf{B}_{K,l}$. If $0\notin \mathsf{a}$, then $\mathsf{a}-1=\{a-1:a\in \mathsf{a}\}$ is the only follower of $\mathsf{a}$. If $0\in \mathsf{a}$ we put $\mathsf{a}'=\mathsf{a}\smallsetminus \{0\}$ and set $\mathsf{b}=(\mathsf{a}'-1)\cup\{j\}$ for some $j\in \mathsf{A}_K\smallsetminus (\mathsf{a}'-1)$. Clearly, there are $|\mathsf{A}_K\smallsetminus (\mathsf{a}'-1)|=K-l+1$ possibilities for doing this, and every $\mathsf{b}$ obtained in this manner is a follower of $\mathsf{a}$.

For describing the predecessors of $\mathsf{a}$ we first assume that $(K-1)\notin \mathsf{a}$ and set $\mathsf{a}''=(\mathsf{a}+1)\cup\{0\}$. This set has $l+1$ elements, and every $\mathsf{b}\subset \mathsf{A}_K$ obtained by removing one of the elements of $\mathsf{a}''$ is a predecessor of $\mathsf{a}$. Since there are $l+1$ possibilities for doing this, $\mathbf{p}(\mathsf{a})=l+1$. If, on the other hand, $(K-1)\in \mathsf{a}$, then the set $\mathbf{p}(\mathsf{a})$ in \eqref{eq:pa1} has the single element $\mathsf{b}= \{0\} \cup ((\mathsf{a}\smallsetminus \{K-1\})+1)\in \mathsf{B}_{K,l}$.
	\end{proof}

	\begin{prop}
	\label{p:mixing}
For every $K\ge2$ and $l\in \{1,\dots ,K-1\}$, the \textit{SFT} $\Omega _{K,l}$ is irreducible and aperiodic.
	\end{prop}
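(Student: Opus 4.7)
The plan is to transfer the statement to the one-step \textit{SFT} $X_{K,l}$ via the conjugacy $\phi_{K,l}$ of Proposition~\ref{p:XKl}, and to argue entirely with the transition matrix $\mathsf{M}=\mathsf{M}_{K,l}$. The starting observation is that the distinguished state $\mathsf{a}_0 = \{0, 1, \ldots, l-1\} \in \mathsf{B}_{K,l}$ carries a self-loop: it contains $0$, and the choice $j = l-1$ in rule (M2) produces $(\mathsf{a}_0 \setminus \{0\} - 1) \cup \{l-1\} = \mathsf{a}_0$, so $\mathsf{M}(\mathsf{a}_0, \mathsf{a}_0) = 1$. Once irreducibility is known, this cycle of length $1$ forces aperiodicity for free, so the whole problem reduces to strong connectivity of the directed graph defined by $\mathsf{M}$.

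For the latter I would introduce the potential function
\[
\phi(\mathsf{a}) = \sum_{a \in \mathsf{a}} a - \binom{l}{2},
\]
which takes nonnegative integer values on $\mathsf{B}_{K,l}$ and vanishes exactly at $\mathsf{a}_0$. Strong connectivity splits cleanly into proving (a) every state reaches $\mathsf{a}_0$, and (b) $\mathsf{a}_0$ reaches every state.

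For (a): if $0 \notin \mathsf{a}$ the unique follower $\mathsf{a} - 1$ reduces $\phi$ by $l$; if $0 \in \mathsf{a} \ne \mathsf{a}_0$, a short counting argument provides some $j \in \{0, 1, \ldots, l-2\}$ outside $\mathsf{a} \setminus \{0\} - 1$, and choosing this $j$ in (M2) reduces $\phi$ by $l-1-j \ge 1$. Iterating drives $\phi$ to $0$ and hence lands on $\mathsf{a}_0$. For (b): I proceed by induction on $\phi(\mathsf{b})$. Given $\mathsf{b} = \{b_1 < \cdots < b_l\} \ne \mathsf{a}_0$, so that $b_l \ge l$ and $b_{l-1} \le K-2$, the candidate predecessor $\mathsf{c} = \{0, b_1 + 1, \ldots, b_{l-1} + 1\}$ lies in $\mathsf{B}_{K,l}$, transitions to $\mathsf{b}$ via (M2) with $j = b_l$, and satisfies $\phi(\mathsf{c}) = \phi(\mathsf{b}) - (b_l - l + 1) < \phi(\mathsf{b})$. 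The inductive hypothesis then connects $\mathsf{a}_0$ to $\mathsf{c}$, and a single step finishes at $\mathsf{b}$.

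The delicate ingredient is the counting claim inside (a): the \emph{only} configuration with $0 \in \mathsf{a}$ in which no admissible $j \le l-2$ exists would require $\{0,\ldots,l-2\} \subseteq \mathsf{a}\setminus\{0\}-1$, but since $|\mathsf{a}\setminus\{0\}-1|=l-1$ this equality forces $\mathsf{a}\setminus\{0\}=\{1,\ldots,l-1\}$, i.e.\ $\mathsf{a}=\mathsf{a}_0$. Once this combinatorial point is pinned down, both halves of the irreducibility argument are mechanical bookkeeping, and the final conclusion---irreducibility together with the self-loop at $\mathsf{a}_0$ yields aperiodicity---is immediate.
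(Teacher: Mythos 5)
Your proof is correct and follows essentially the same route as the paper: reduce to $X_{K,l}$ via $\phi_{K,l}$, exploit the self-loop at $\mathsf{e}=\{0,\dots,l-1\}$ for aperiodicity, and establish strong connectivity by showing every state reaches $\mathsf{e}$ and is reached from it (your inductive predecessor $\{0,b_1+1,\dots,b_{l-1}+1\}$ reconstructs exactly the paper's explicit length-$l$ path from $\mathsf{e}$ to $\mathsf{b}$). Your potential function merely makes rigorous the step the paper dismisses with ``there obviously exists a path from every $\mathsf{b}$ to $\mathsf{e}$,'' which is a welcome but not structurally different addition.
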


	\begin{proof}
In view of the isomorphism $\phi _{K,l}\colon \Omega _{K,l}\longrightarrow X_{K,l}$ it suffices to prove the analogous assertion for the \textit{SFT} $X_{K,l}$ in \eqref{eq:XKl}. The state
	\begin{equation}
	\label{eq:e}
\mathsf{e}=\{0,\dots ,l-1\}\in \mathsf{B}_{K,l}
	\end{equation}
satisfies that $|(\mathsf{e}+1)\smallsetminus \mathsf{e}|=1$, so that $\mathsf{e}\in \mathbf{f}(\mathsf{e})$ (and hence $\mathsf{e}\in \mathbf{p}(\mathsf{e})$) by Condition (M2) \vpageref{M2}. Furthermore there obviously exists a path (i.e., a finite sequence of allowed transitions) from every $\mathsf{b}\in \mathsf{B}_{K,l}$ to $\mathsf{e}$. Conversely, if $\mathsf{b}=\{b_1,\dots ,b_l\}$ with $b_1<b_2<\dots <b_l$, there is a path $\mathsf{e}=\{0,\dots ,l-1\}\rightarrow \{0,1,\dots ,l-2,b_1+l-1\}\rightarrow \{0,\dots ,l-3,b_1+l-2,b_2+l-2\}\rightarrow \dots \rightarrow \{0,b_1+1,\dots ,b_{l-1}+1\}\rightarrow \mathsf{b}$ of length $l$. This shows that $X_{K,l}$ is irreducible and aperiodic (since it contains a fixed point).
	\end{proof}

	\begin{prop}
	\label{p:average}
For every $K\ge1$ and $l\in \{0,\dots ,K\}$ there exists a continuous map $b_{K,l}\colon \Omega _{K,l}\longrightarrow \mathbb{Z}$ such that
	\begin{equation}
	\label{eq:cohomology}
\omega _0 = l + b_{K,l}(\omega ) - b_{K,l}(\sigma \omega )
	\end{equation}
for every $\omega \in \Omega _{K,l}$.
	\end{prop}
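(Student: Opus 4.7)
The plan is to transport the problem via the conjugacy $\phi _{K,l}\colon \Omega _{K,l}\longrightarrow X_{K,l}$ of Proposition \ref{p:XKl} and define $b_{K,l}$ as (minus) the sum of the ``desk positions'' of the files in the office picture of Definition \ref{d:office}. First I would dispose of the degenerate cases $l=0$ and $l=K$: there $\Omega _{K,l}$ consists of a single fixed point and $\omega _0=l$ identically, so $b_{K,l}\equiv 0$ works. Henceforth I assume $1\le l\le K-1$.

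Writing $\mathsf{a}_n(\omega )\coloneqq \phi _{K,l}(\omega )_n\in \mathsf{B}_{K,l}$, I would set
	\begin{displaymath}
b_{K,l}(\omega )\coloneqq -\sum\nolimits_{a\in \mathsf{a}_0(\omega )}a\in \mathbb{Z}.
	\end{displaymath}
Continuity (in fact local constancy) is immediate from \eqref{eq:phiKl}, since $\mathsf{a}_0(\omega )$ depends only on the coordinates $\omega _{-K},\dots ,\omega _{-1}$.

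The verification of \eqref{eq:cohomology} then reduces to checking, for each allowed transition $(\mathsf{a}_0,\mathsf{a}_1)$ of the matrix $\mathsf{M}$, the identity $\sum _{a\in \mathsf{a}_0}a-\sum _{a\in \mathsf{a}_1}a=l-\omega _0$, which I would split into the two cases \textup{(M1)} and \textup{(M2)}. In case \textup{(M1)}, $\mathsf{a}_1=\mathsf{a}_0-1$ and \eqref{eq:iso1} gives $\omega _0=0$, so both sides equal $l$. In case \textup{(M2)}, $\mathsf{a}_1+1=(\mathsf{a}_0\smallsetminus \{0\})\cup \{j+1\}$ for the unique new element $j\in \mathsf{A}_{K-1}\smallsetminus ((\mathsf{a}_0\smallsetminus \{0\})-1)$, whence \eqref{eq:iso1} forces $\omega _0=j+1$; a direct computation then gives $\sum _{a\in \mathsf{a}_0}a-\sum _{a\in \mathsf{a}_1}a=(l-1)-j=l-\omega _0$.

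I do not anticipate any genuine obstacle here: the only non-routine step is identifying the correct transfer function $b_{K,l}$, which the office model of Definition \ref{d:office} suggests naturally (the signed sum of file positions). Once that choice is made, the rest is a short case analysis based on \textup{(M1)--(M2)} and \eqref{eq:iso1}.
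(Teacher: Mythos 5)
Your proof is correct, and it takes a genuinely different route from the paper's. The paper works directly in $\Omega_{K,l}$ with the translated triangles $A_m=A+(m,m)$ and obtains the identity by double counting $\sum_{m=1}^N|A_m\cap S(\omega )|=Nl$, which yields the $N$-step version \eqref{eq:cohomologyN} in one stroke (this is what feeds Corollaries \ref{c:periodic} and \ref{c:average}); its transfer function is $b_{K,l}(\omega )=\sum_{m=1}^{K-1}|A\cap A_m\cap S(\omega )|=\sum_{n=-K+1}^{-1}\max\,(\tilde\omega _n,0)$, which is exactly $\sum_{a\in \mathsf{a}_0(\omega )}a$ --- i.e., the negative of your function. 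You instead transport everything to $X_{K,l}$ and verify the one-step coboundary relation transition by transition via (M1)--(M2) and \eqref{eq:iso1}; your case analysis is right (in (M1) both sides equal $l$, and in (M2) the new element $j$ satisfies $\omega _0=j+1$ because $(\mathsf{a}_1+1)\smallsetminus \mathsf{a}_0=\{j+1\}$), local constancy gives continuity, and the degenerate cases $l\in\{0,K\}$ are trivial. Your approach buys a short, purely combinatorial verification at the cost of relying on Proposition \ref{p:XKl} and of only obtaining \eqref{eq:cohomologyN} by telescoping afterwards. One point in your favour worth flagging: your sign is the one that actually satisfies \eqref{eq:cohomology} as written. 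The paper's own computation \eqref{eq:summation} reads $Nl=\sum_{n=0}^{N-1}\omega _n+b_{K,l}(\omega )-b_{K,l}(\sigma ^N\omega )$, which gives $\sum_{n=0}^{N-1}\omega _n=Nl-b_{K,l}(\omega )+b_{K,l}(\sigma ^N\omega )$, so the transfer function in \eqref{eq:cohomologyN}--\eqref{eq:cohomology} should really be $-b_{K,l}$ as defined in \eqref{eq:b}; this is an inconsequential sign slip in the paper, but your choice $b_{K,l}(\omega )=-\sum_{a\in \mathsf{a}_0(\omega )}a$ is the consistent one.
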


	\begin{proof}
Fix $K,l$ and $\omega \in \Omega _{K,l}$. We denote by $A=\{(k_1,k_2):0\le k_1 < K, k_1-K \le k_2 < 0\}\subset \mathbb{Z}^2$ the triangle appearing in Figure~\ref{f:figure1} and set, for every $m\in \mathbb{Z}$, $A_m=A+(m,m)$. By \eqref{eq:subshift}, $a(\sigma ^n\omega ) = |A\cap S(\sigma ^n\omega )|=|A_n\cap S(\omega )|=l$ for every $n\in \mathbb{Z}$. Hence $\sum_{m=1}^{N}|A_m\cap S(\omega )|=Nl$ for every $N\ge 1$.

For every $m,n\in \mathbb{Z}$, $(n,\tilde{\omega }_n) \in A_m$  if and only if $\omega _n>0$ and $m=n+1,\dots ,n+\omega _n$. In other words, $\sum_{m\in \mathbb{Z}}1_{A_m}(n,\tilde{\omega }_n) = \sum_{m=1}^K 1_{A_{n+m}}(n,\tilde{\omega }_n) = \omega _n$, where $1_{A_m}\colon \mathbb{Z}^2\longrightarrow \mathbb{Z}$ is the indicator function of $A_m$.

Let $N > K$ (it actually suffices to choose $N>0$, but with $N>K$ one can use Figure~\ref{f:figure1} to see what is going on here). Then
	\begin{align}
	\label{eq:summation}
Nl&=\sum\nolimits_{m=1}^{N}|A_m\cap S(\omega )|=  \sum\nolimits_{{m=1}}^N \sum\nolimits_{{n\in\mathbb{Z}}} 1_{A_m}(n,\tilde{\omega }_n) \notag
	\\
&=\sum\nolimits_{{n\in\mathbb{Z}}} \sum\nolimits_{{m=1}}^N 1_{A_m}(n,\tilde{\omega }_n) \notag
	\\
&=\sum\nolimits_{{n=0}}^{N-1} \;\sum\nolimits_{{m=1}}^K 1_{A_{n+m}}(n,\tilde{\omega }_n) + \sum\nolimits_{{n=-K+1}}^{-1} \;\sum\nolimits_{{m=1}}^{K-1} 1_{A_{m}}(n,\tilde{\omega }_n) \notag
	\\
&\qquad \qquad \qquad \qquad - \sum\nolimits_{{n=N-K+1}}^{N-1} \;\sum\nolimits_{{m=1}}^{K-1} 1_{A_{N+m}}(n,\tilde{\omega }_n)
	\\
&=\sum\nolimits_{{n=0}}^{N-1} \omega _n + \sum\nolimits_{{n=-K+1}}^{-1} \;\sum\nolimits_{{m=1}}^{K-1} 1_{A_{m}}(n,\tilde{\omega }_n)\notag
	\\
&\qquad \qquad \qquad \qquad -\sum\nolimits_{{n=N-K+1}}^{N-1} \;\sum\nolimits_{{m=1}}^{K-1} 1_{A_{N+m}}(n,\tilde{\omega }_n). \notag
	\end{align}
For every $\omega \in \Omega _{K,l}$ we put
	\begin{equation}
	\label{eq:b}
	\begin{aligned}
b_{K,l}(\omega )&=\sum\nolimits_{m=1}^\infty |A\cap A_m \cap S(\omega )| = \sum\nolimits_{m=1}^{K-1}|A\cap A_m \cap S(\omega )|
	\\
&= \sum\nolimits_{{n=-K+1}}^{-1} \;\sum\nolimits_{{m=1}}^{K-1} 1_{A_{m}}(n,\tilde{\omega }_n).
	\end{aligned}
	\end{equation}
Then
	\begin{align*}
b_{K,l}(\sigma ^N\omega )&=\sum\nolimits_{m=1}^{K-1}|A\cap A_m \cap S(\sigma ^N\omega )| = \sum\nolimits_{m=1}^{K-1}|A_N\cap A_{N+m} \cap S(\omega )|
	\\
&=\sum\nolimits_{{n=N-K+1}}^{N-1} \;\sum\nolimits_{{m=1}}^{K-1} 1_{A_{N+m}}(n,\tilde{\omega }_n),
	\end{align*}
and \eqref{eq:summation} shows that
	\begin{equation}
	\label{eq:cohomologyN}
\sum\nolimits_{n=0}^{N-1}\omega _n = Nl + b_{K,l}(\omega ) - b_{K,l}(\sigma ^N\omega )
	\end{equation}
for every $\omega \in \Omega _{K,l}$ and $N\ge0$. By setting $N=1$ in \eqref{eq:cohomologyN} we obtain \eqref{eq:cohomology}.
	\end{proof}

	\begin{coro}
	\label{c:periodic}
Let $K\ge2$, and let $\omega \in \Omega _K$ be a periodic point with period $p$, say. Then $l\coloneqq \frac{1}{p}\sum_{n=0}^{p-1}\omega _n\in \mathsf{A}_K$ and $\omega \in \Omega _{K,l}$.
	\end{coro}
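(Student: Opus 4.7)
The plan is to apply the cohomology equation \eqref{eq:cohomologyN} from Proposition \ref{p:average} directly to the periodic point, exploiting the fact that a period-$p$ orbit forces telescoping of the coboundary term over any window of length $p$.

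First I would invoke Lemma \ref{l:components}: the function $a\colon \Omega_K\longrightarrow \mathsf{A}_K$ is well-defined, so $\omega $ automatically belongs to $\Omega_{K,l'}$ with $l' \coloneqq a(\omega )\in \mathsf{A}_K = \{0,\dots ,K\}$. The goal is therefore to identify the arithmetic mean $\tfrac{1}{p}\sum_{n=0}^{p-1}\omega _n$ with this $l'$, which will simultaneously prove that the mean lies in $\mathsf{A}_K$ and that $\omega \in \Omega_{K,l'}$.

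Next I would apply \eqref{eq:cohomologyN} with $N=p$ to $\omega \in \Omega_{K,l'}$:
\begin{equation*}
\sum\nolimits_{n=0}^{p-1}\omega _n = p\,l' + b_{K,l'}(\omega ) - b_{K,l'}(\sigma ^p\omega ).
\end{equation*}
Since $\sigma ^p\omega =\omega $ by periodicity, the coboundary term vanishes and we obtain $\sum_{n=0}^{p-1}\omega _n = p\,l'$, i.e.\ $l=l'=a(\omega )\in \mathsf{A}_K$ and $\omega \in \Omega_{K,l}$.

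There is no real obstacle here: the corollary is a one-line consequence of Proposition \ref{p:average} once one recognises that periodicity kills the $b_{K,l'}$-coboundary. The only thing to be careful about is logical order: we cannot start by defining $l$ as the mean and then asserting $\omega \in \Omega_{K,l}$, because the cohomology identity is stated for the (a priori possibly different) integer $l'=a(\omega )$; the argument above first pins down $l'$ as an integer via $a(\omega )$, and only afterwards identifies it with the mean $l$.
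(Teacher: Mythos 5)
Your argument is correct and is exactly the intended derivation: the paper states this as an immediate corollary of Proposition \ref{p:average}, namely apply \eqref{eq:cohomologyN} with $N=p$ to $\omega \in \Omega_{K,a(\omega)}$ and use $\sigma^p\omega=\omega$ to cancel the coboundary terms. Your remark about first fixing $l'=a(\omega)$ via Lemma \ref{l:components} before identifying it with the mean is the right way to keep the logic in order.
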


	\begin{coro}
	\label{c:average}
For every $K\ge1$, $l\in \{0,\dots ,K\}$, and $\omega \in \Omega _{K,l}$,
	\begin{equation}
	\label{eq:average}
\lim\nolimits_{N\to\infty }\frac1N \sum\nolimits _{n=0}^{N-1}\omega _n=l.
	\end{equation}
	\end{coro}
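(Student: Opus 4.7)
The plan is to read off the corollary directly from the cohomological identity \eqref{eq:cohomologyN} established in Proposition \ref{p:average}. That identity states
\[
\sum_{n=0}^{N-1}\omega_n = Nl + b_{K,l}(\omega) - b_{K,l}(\sigma^N\omega)
\]
for every $\omega \in \Omega_{K,l}$ and $N \ge 0$, so after dividing by $N$ the claim reduces to showing that the error term $(b_{K,l}(\omega) - b_{K,l}(\sigma^N\omega))/N$ tends to $0$.

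First I would observe that the function $b_{K,l}$ defined in \eqref{eq:b} is bounded. This is immediate from the representation
\[
b_{K,l}(\omega) = \sum_{n=-K+1}^{-1}\sum_{m=1}^{K-1}1_{A_m}(n,\tilde{\omega}_n),
\]
which is a finite sum of $0/1$ terms and hence satisfies $0 \le b_{K,l}(\omega) \le (K-1)^2$ for every $\omega \in \Omega_{K,l}$. (Alternatively, since $\Omega_{K,l}$ is compact and $b_{K,l}$ is continuous, boundedness is automatic.)

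Next I would substitute this bound into the rearranged identity
\[
\frac{1}{N}\sum_{n=0}^{N-1}\omega_n - l = \frac{b_{K,l}(\omega) - b_{K,l}(\sigma^N\omega)}{N},
\]
and observe that the right-hand side is bounded in absolute value by $(K-1)^2/N$, which tends to $0$ as $N \to \infty$. This yields \eqref{eq:average} and completes the proof.

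There is no real obstacle: the entire content is contained in Proposition \ref{p:average}, and the corollary is just the standard observation that a coboundary has vanishing ergodic averages.
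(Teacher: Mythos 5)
Your proof is correct and follows exactly the route the paper intends: the corollary is stated without separate proof as an immediate consequence of \eqref{eq:cohomologyN}, and your observation that the transfer function $b_{K,l}$ is uniformly bounded (by $(K-1)^2$, or by continuity on the compact space $\Omega_{K,l}$) is precisely the missing step, consistent with the bound $K^2$ quoted in the introduction. Nothing further is needed.
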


	\begin{rems}
	\label{r:Lindner+Strang}
(1) Equation \eqref{eq:cohomology} shows that the cocycle $\boldsymbol{\omega }\colon \mathbb{Z}\times \Omega _{K,l}\longrightarrow \mathbb{Z}$, defined by
	\begin{displaymath}
\boldsymbol{\omega }(n,\omega )=
	\begin{cases}\sum_{k=0}^{n-1}\omega _k &\textup{if}\enspace n>0,
	\\
0&\textup{if}\enspace n=0,
	\\
-\mathbf{\omega }(-n,\sigma ^n\omega )&\textup{if}\enspace n<0,
	\end{cases}
	\end{displaymath}
is cohomologous to the homomorphism $n\mapsto ln$, with transfer function $b_{K,l}$ given by \eqref{eq:b}. In the context of bounded topological orbit equivalence, an analogous formula appears in \cite[Lemma 2.6 and Theorem 2.3 (2)]{BT}.

(2) When combined with Proposition \ref{p:PiA} (4), Corollary \ref{c:average} is equivalent to \cite[Theorem 1]{Lindner+Strang}. In particular, the integer $l=a(\omega )$ in \eqref{eq:a} -- \eqref{eq:subshift} determines, for every $\omega \in \Omega _K$, the position of the `main diagonal' of the bi-infinite permutation matrix associated with $\omega $ (cf. \cite[p. 526]{Lindner+Strang}).
	\end{rems}

We end this section with a few comparisons between the \textit{SFT's} $\Omega _{K,l}$ (or $X_{K,l}$) for different values of $K$ and $l$.

	\begin{prop}
	\label{p:comparisons}
For every $K\ge2$ and $l\in \{1,\dots ,K-1\}$, the following statements hold.
	\begin{enumerate}
	\item
$\Omega _{K,l}\subset \Omega _{K+1,l}$.
	\item
$\Omega _{K,l}+1=\{(\omega _m+1)_{m\in \mathbb{Z}}:\omega \in \Omega _{K,l}\}\subset \Omega _{K+1,l+1}$.
	\item
The \textit{SFT's} $\Omega _{K,l}$ and $\Omega _{K,K-l}$ are topologically conjugate.
	\end{enumerate}
	\end{prop}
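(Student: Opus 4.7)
Parts (1) and (2) both follow from Corollary~\ref{c:average}, which characterizes $\Omega_{K,l}$ as the set of $\omega\in\Omega_K$ whose Ces\`aro averages $N^{-1}\sum_{n=0}^{N-1}\omega_n$ converge to $l$. For (1), any $\omega\in\Omega_{K,l}$ also belongs to $\mathsf{A}_{K+1}^{\mathbb{Z}}$; the unchanged permutation $\pi^{(\omega)}$ still has displacements in $\mathsf{A}_K\subset\mathsf{A}_{K+1}$, so $\omega\in\Omega_{K+1}$, and its Ces\`aro average $l$ places it in $\Omega_{K+1,l}$. For (2), set $\omega'=\omega+1$: then $\omega'_n\in\{1,\dots,K+1\}\subset\mathsf{A}_{K+1}$, and $\pi^{(\omega')}=\varsigma\circ\pi^{(\omega)}$ is a bijection of $\mathbb{Z}$, so $\omega'\in\Omega_{K+1}$ with Ces\`aro average $l+1$, giving $\omega'\in\Omega_{K+1,l+1}$.

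For (3) the plan is to construct an explicit conjugacy $\psi\colon\Omega_{K,l}\to\Omega_{K,K-l}$ by inverting and then re-shifting the associated permutation. For $\omega\in\Omega_K$ set
\[
\psi(\omega)_n \;=\; (\pi^{(\omega)})^{-1}(n)+K-n,\qquad n\in\mathbb{Z},
\]
so that $\pi^{(\psi(\omega))}=\varsigma^K\circ(\pi^{(\omega)})^{-1}$. The inclusion $(\pi^{(\omega)})^{-1}(n)-n\in\{-K,\dots,0\}$ guarantees $\psi(\omega)\in\mathsf{A}_K^{\mathbb{Z}}$, and $\varsigma^K\circ(\pi^{(\omega)})^{-1}$ is manifestly a bijection of $\mathbb{Z}$, so $\psi(\omega)\in\Omega_K$. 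A short averaging argument---reindexing $\sum_{n=0}^{N-1}((\pi^{(\omega)})^{-1}(n)-n)$ by $k=(\pi^{(\omega)})^{-1}(n)$, which exchanges the interval $\{0,\dots,N-1\}$ for a set differing from it by only $O(K)$ boundary elements---together with Corollary~\ref{c:average} applied to $\omega$ shows that the Ces\`aro average of $\psi(\omega)$ equals $K-l$, so $\psi(\omega)\in\Omega_{K,K-l}$.

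It then remains to verify that $\psi$ is a topological conjugacy. Continuity is the easiest step: $(\pi^{(\omega)})^{-1}(n)$ is the unique $k\in\{n-K,\dots,n\}$ with $\omega_k=n-k$, so $\psi(\omega)_n$ is a sliding $(K+1)$-block function of $\omega$. Shift-equivariance follows from $(\pi^{(\sigma\omega)})^{-1}=\varsigma^{-1}\circ(\pi^{(\omega)})^{-1}\circ\varsigma$ by direct substitution. The decisive step---and what I expect to be the main technical point---is bijectivity, which I plan to establish via the identity $\psi^2=\sigma^{-K}$: applying the construction twice yields
\[
\pi^{(\psi^2(\omega))}\;=\;\varsigma^K\circ\pi^{(\omega)}\circ\varsigma^{-K},
\]
and unwinding gives $\psi^2(\omega)_n=\omega_{n-K}$. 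Since $\sigma^{-K}$ is a homeomorphism, $\psi$ is a continuous bijection between compact spaces with continuous inverse $\sigma^K\circ\psi$; combined with shift-equivariance, $\psi$ is the desired topological conjugacy between $\Omega_{K,l}$ and $\Omega_{K,K-l}$.
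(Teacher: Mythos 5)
Your proposal is correct, and parts (1)--(2) match the paper (which simply declares them obvious; your Ces\`aro-average justification is a fine way to make them explicit). For part (3), however, you take a genuinely different route. The paper defines $\Phi(\omega)_m = K-\omega_{-m}$ (see \eqref{eq:Phi}): a one-symbol complementation combined with the coordinate reflection $m\mapsto -m$, verified to exchange $\Omega_{K,l}$ and $\Omega_{K,K-l}$ by counting $|A\cap S(\omega)|$ versus $|B\cap S(\omega)|$ in Figure~\ref{f:figure1}. Note that $\Phi$ satisfies $\Phi\circ\sigma=\sigma^{-1}\circ\Phi$, so strictly speaking it intertwines the shift with its inverse (a flip conjugacy), even though the paper calls it shift-equivariant. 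Your map $\psi(\omega)_n=(\pi^{(\omega)})^{-1}(n)+K-n$, by contrast, genuinely commutes with $\sigma$: it is a sliding $(K{+}1)$-block code, the identity $\pi^{(\psi^2(\omega))}=\varsigma^K\circ\pi^{(\omega)}\circ\varsigma^{-K}=\pi^{(\sigma^{-K}\omega)}$ (via Proposition~\ref{p:PiA}(2)) gives $\psi^2=\sigma^{-K}$ and hence bijectivity with continuous inverse $\psi\circ\sigma^K$, and the reindexing argument correctly shifts the Ces\`aro average from $l$ to $K-l$ (using that the sets $\Omega_{K,l'}$ partition $\Omega_K$ and are distinguished by their averages, which upgrades Corollary~\ref{c:average} to a characterization). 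What each approach buys: the paper's $\Phi$ is the more economical formula and makes the symmetry of Figure~\ref{f:figure1} transparent, but delivers conjugacy only up to time reversal; your $\psi$ is slightly more work to set up but yields a shift-commuting conjugacy in the standard sense, together with the pleasant interpretation that $\Omega_{K,K-l}$ is the space of (normalized) inverse permutations and the bonus identity $\psi^2=\sigma^{-K}$.
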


	\begin{proof}
The first two assertions are obvious. For the third statement we define a shift-equivariant bijection $\Phi \colon \mathsf{A}_K^\mathbb{Z}\longrightarrow \mathsf{A}_K^\mathbb{Z}$ by setting
	\begin{equation}
	\label{eq:Phi}
\Phi (\omega )_m=K-\omega _{-m}
	\end{equation}
for every $\omega \in \Sigma _{K}$ and $m\in \mathbb{Z}$. It is clear that $\Phi (\Omega _K)=\Omega _K$. In order to check that $\Phi (\Omega _{K,l})=\Omega _{K,K-l}$ we take another look at Figure~\ref{f:figure1}: if $\omega \in \Omega _{K,l}$, then $|\tilde{A}|=|A\cap S(\omega )|=l$. Since $|(A\cap S(\omega ))\cup (B\cap S(\omega ))|=K$, we obtain that $|B\cap S(\omega )|=K-l$. Finally we note that $|A\cap S(\omega )| = |B\cap S(\Phi (\omega ))|$ and $|B\cap S(\omega )| = |A\cap S(\Phi (\omega ))|$, so that $\Phi (\omega )\in \Omega _{K,K-l}$ if and only if $\omega \in \Omega _{K,l}$.
	\end{proof}

	\begin{rema}
	\label{r:comparisons}
The conjugacy between $\Omega _{K,l}$ and $\Omega _{K,K-l}$ can, of course, also be expressed in terms of the \textit{SFT's} $X_{K,l}$ and $X_{K,K-l}$. For every $\mathsf{a}\in \mathsf{B}_{K,l}$ we put
	\begin{equation}
	\label{eq:a*}
\mathsf{a}^*=\{K-1-j:j\in (\mathsf{A}_{K-1}\smallsetminus \mathsf{a})\}.
	\end{equation}
If $\mathsf{a}\in \mathsf{B}_{K,l}$ then $\mathbf{p}(\mathsf{a}^*) =  \{\mathsf{b}^*:\mathsf{b}\in \mathbf{f}(\mathsf{a})\}=\mathbf{f}(\mathsf{a})^*$ and $\mathbf{f}(\mathsf{a}^*) =  \{\mathsf{b}^*:\mathsf{b}\in \mathbf{p}(\mathsf{a})\}=\mathbf{p}(\mathsf{a})^*$. The corresponding shift-equivariant isomorphism $\Psi_{K,l}: X_{K,l}\longrightarrow X_{K,K-l}$ is given by
	\begin{equation}
	\label{eq:Psi}
\Psi _{K,l}(\mathsf{a} )_m=\mathsf{a}_{-m}^*
	\end{equation}
for every $(\mathsf{a}_n) \in X_{K,l}$ and $m \in \mathbb{Z}$.
	\end{rema}

\section{Entropy}\label{s:entropy}

	\begin{lemm}
	\label{l:entropy}
For every $K,l$ with $0<l<K$, the topological entropy of $\Omega _{K,l}$ satisfies that $0<h(\Omega _{K,l})\le \log\,(l+1)$.
	\end{lemm}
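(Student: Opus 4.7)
My plan is to pass to the one-step recoding $X_{K,l}$ from Proposition \ref{p:XKl} (which has the same topological entropy as $\Omega _{K,l}$) and to read both inequalities off the explicit follower and predecessor counts supplied by Proposition \ref{p:follower-predecessor}.

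For the upper bound I would count the number $c_N$ of admissible blocks of length $N$ in $X_{K,l}$ by walking backwards along the defining digraph: formula \eqref{eq:pa2} gives $|\mathbf{p}(\mathsf{a})|\le l+1$ for every state $\mathsf{a}\in \mathsf{B}_{K,l}$, hence
\[
c_N\le |\mathsf{B}_{K,l}|\cdot (l+1)^{N-1}.
\]
Taking $\tfrac{1}{N}\log(\cdot )$ and letting $N\to \infty $ yields $h(X_{K,l})\le \log(l+1)$.

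For the strict positivity I would exhibit, at the fixed point $\mathsf{e}=\{0,\dots ,l-1\}$ used in the proof of Proposition \ref{p:mixing}, two distinct loops of the same length that can be concatenated freely. Since $0\in \mathsf{e}$ and $l\le K-1$, formula \eqref{eq:fa2} gives $|\mathbf{f}(\mathsf{e})|=K-l+1\ge 2$, so there exists some $\mathsf{b}\in \mathbf{f}(\mathsf{e})$ with $\mathsf{b}\ne \mathsf{e}$. Irreducibility of $X_{K,l}$ then supplies a path $\mathsf{b}\to \mathsf{a}_1\to \dots \to \mathsf{a}_{k-1}\to \mathsf{e}$ of some length $k\ge 1$, and prepending the transition $\mathsf{e}\to \mathsf{b}$ turns it into a loop $\gamma $ at $\mathsf{e}$ of length $k+1$ whose second symbol is $\mathsf{b}$. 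Alongside $\gamma $ we place the trivial loop $\gamma _0$ at $\mathsf{e}$ consisting of $k+1$ iterated self-loops $\mathsf{e}\to \mathsf{e}$; any concatenation of $N$ loops chosen from $\{\gamma _0,\gamma \}$ is an admissible word of length $N(k+1)+1$, and the $2^N$ choices are pairwise distinct because they disagree in the second symbol of at least one of the $N$ blocks. This gives $c_{N(k+1)+1}\ge 2^N$ and hence
\[
h(X_{K,l})\ge \frac{\log 2}{k+1}>0.
\]
The only substantive point hidden in this sketch is the pairwise distinctness of the $2^N$ concatenations, which is immediate from the second-symbol detection just described; beyond that, both estimates fall out at once from Propositions \ref{p:mixing} and \ref{p:follower-predecessor}.
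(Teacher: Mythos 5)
Your proof is correct and follows essentially the same route as the paper: both pass to the one-step recoding $X_{K,l}$, obtain the upper bound from the predecessor count \eqref{eq:pa2}, and derive positivity from the coexistence of two distinct loops at the fixed state $\mathsf{e}$. The only difference is that the paper exhibits an explicit length-two ``diamond'' $\mathsf{e}\to \mathsf{e}\to \mathsf{e}$ versus $\mathsf{e}\to \{0,\dots ,l-2,l\}\to \mathsf{e}$, whereas you use $|\mathbf{f}(\mathsf{e})|\ge 2$ together with irreducibility to close up a possibly longer second loop; both give the same conclusion.
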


	\begin{proof}
Since $\Omega _{K,l}$ and $X_{K,l}$ are topologically conjugate by Proposition \ref{p:XKl}, their entropies coincide. Clearly, $h(X_{K,l})>0$, since $X_{K,l}$ contains the `diamond' consisting of the paths $\mathsf{e}\rightarrow \mathsf{e}\rightarrow \mathsf{e}$ and $\mathsf{e}\rightarrow \{0,1,\dots ,l-2,l\}\rightarrow \mathsf{e}$ (cf. \eqref{eq:e}), and $h(X_{K,l})\le\log\,(l+1)$ since every state $\mathsf{a}\in \mathsf{B}_{K,l}$ has at most $l+1$ predecessors (cf. \eqref{eq:pa2}).
	\end{proof}

Proposition \ref{p:comparisons} shows that $h(\Omega _{K,l})\le h(\Omega _{K+1,l})$ and $h(\Omega _{K,l})\le h(\Omega _{K+1,l+1})$ whenever $K>l>0$. Our next aim is to investigate $\lim_{K\to\infty }h(\Omega _{K,l})$ for every $l\ge1$.

	\begin{exam}[The \textit{SFT} $\Omega _{K,1}$]
	\label{e:l=1}
\textit{For every $K\ge1$, $h(\Omega _{K,1})$ is equal to $\log\,\beta _K$, where $\beta _K$ is the largest root of the polynomial $f_K(x)=x^{K}-x^{K-1}-\dots -1$. Hence $\lim_{K\to\infty }h(\Omega _{K,1}) = \log 2$.}

\medskip Indeed, the \textit{SFT} $X_{K,1}$ (which is conjugate to $\Omega _{K,1}$) has the form
	\begin{center}
\scalebox{.6}{
	\begin{tikzpicture}[->,>=stealth',shorten >=2pt,auto,node distance=35mm, inner sep=1.5mm, semithick]
\tikzstyle{this state}=[minimum size = 5mm, fill=white,draw=black,text=black];
\node[state] (A) {1};
\tikzstyle{this state}=[minimum size = 5mm, fill=white,draw=black,text=black];
\node[state] (B) [right of=A] {2};
\tikzstyle{this state}=[minimum size = 5mm, fill=white,draw=black,text=black];
\node[state] (C) [right of=B] {3};
\tikzstyle{this state}=[minimum size = 5mm, draw=white,text=black];
\node[this state] (D) [right of=C] {$\dots$};
\tikzstyle{this state}=[minimum size = 5mm, fill=white,draw=black,text=black];
\node[state] (E) [right of=D] {\small{K-1}};
\tikzstyle{this state}=[minimum size = 5mm, fill=white,draw=wblack,text=black];
\node[state] (F) [right of=E] {K};
\path (A) edge [loop] node [above] {1} (A)
(B) edge node [below] {0} (A)
(C) edge node [below] {0}  (B)
(D) edge node [below] {0}  (C)
(E) edge node [below] {0}  (D)
(F) edge node [below] {0}  (E)
(A) edge [bend left = 18] node [above=-1] {2} (B)
(A) edge [bend left = 35] node [above=-1] {3} (C)
(A) edge [bend left = 36] node [above] {K-1} (E)
(A) edge [bend left = 40] node [above] {K} (F);
	\end{tikzpicture}}
\vspace{0mm}
	\end{center}
and is described by the $(K\times K)$-transition matrix
	\begin{displaymath}
P=\left(
	\begin{smallmatrix}
1&1&1&\cdots&1&1&1
	\\
1&0&0&\cdots&0&0&0
	\\
0&1&0&\cdots&0&0&0
	\\
\vdots&&&\ddots&&&\vdots
	\\
0&0&0&\cdots&1&0&0
	\\
0&0&0&\cdots&0&1&0
	\end{smallmatrix}\right)
	\end{displaymath}
with characteristic polynomial $f_K$. The largest root $\beta _K$ of $f_K$ satisfies that $1=\beta _K^{-1}+\dots +\beta _K^{-K}$. As $K\rightarrow \infty $, $\beta _K\rightarrow 2$ and $h(\Omega _{K,1})=h(X_{K,1})=\log \,\beta _K\rightarrow \log 2$ as $K\to\infty $.
	\end{exam}

The following theorem yields an analogous result for arbitrary $l$.

	\begin{theo}
	\label{t:entropy}
For every $K,l$ with $0 < l< K$, $(1-\frac{l}{K})\log\,(l+1)\le h(\Omega _{K,l})\le \log \,(l+1)$. In particular, $\lim_{K\to\infty }h(\Omega _{K,l})=\log\,(l+1)$ for every $l\ge1$.
	\end{theo}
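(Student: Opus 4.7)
The upper bound $h(\Omega_{K,l})\le\log(l+1)$ is already Lemma \ref{l:entropy}. For the lower bound I plan to work with the conjugate one-step SFT $X_{K,l}$ from Proposition \ref{p:XKl} and establish the combinatorial estimate
\[
(\mathsf{M}^K)_{\mathsf{e},\mathsf{e}}\ \ge\ (l+1)^{K-l},\qquad \mathsf{e}=\{0,1,\dots,l-1\},
\]
where $\mathsf{M}=\mathsf{M}_{K,l}$ is the transition matrix from Definition \ref{d:office}. Granted this estimate, $(\mathsf{M}^{NK})_{\mathsf{e},\mathsf{e}}\ge ((\mathsf{M}^K)_{\mathsf{e},\mathsf{e}})^N\ge (l+1)^{N(K-l)}$, so $X_{K,l}$ admits at least $(l+1)^{N(K-l)}$ words of length $NK+1$; dividing by $NK+1$ and letting $N\to\infty$ yields $h(X_{K,l})\ge (1-l/K)\log(l+1)$. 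Sandwiching then gives the limit $\lim_{K\to\infty}h(\Omega_{K,l})=\log(l+1)$.

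To count loops of length $K$ at $\mathsf{e}$ in $X_{K,l}$ I decompose $K=(K-l)+l$ and argue \emph{backwards} for the first $K-l$ steps and \emph{forwards} for the remaining $l$. A direct induction on $t$ using \eqref{eq:pa1} shows that every state $\mathsf{a}_{-t}$ reached by $t$ backward steps from $\mathsf{e}$ is contained in $\{0,1,\dots,l+t-1\}$. In particular, for $0\le t\le K-l-1$ the state does not contain $K-1$, and by \eqref{eq:pa2} it therefore has exactly $l+1$ predecessors. Following this tree produces $(l+1)^{K-l}$ distinct length-$(K-l)$ backward paths starting at $\mathsf{e}$, each terminating at some state $\mathsf{b}\in \mathsf{B}_{K,l}$.

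The forward half, which is the main obstacle, is the claim that every $\mathsf{b}=\{y_1<y_2<\dots<y_l\}\in\mathsf{B}_{K,l}$ is reachable from $\mathsf{e}$ in \emph{exactly} $l$ forward steps. I construct an explicit such path by teleporting at every step to $j_s\coloneqq y_s+(l-s)$ for $s=1,\dots,l$. A short induction shows that for $0\le t\le l$,
\[
\mathsf{a}_t\ =\ \{0,1,\dots,l-t-1\}\cup\{y_s+l-t:1\le s\le t\},
\]
so that $\mathsf{a}_0=\mathsf{e}$ and $\mathsf{a}_l=\mathsf{b}$. Validity of the $t$-th transition reduces to three checks: $j_t\ge l-t$ (so $j_t\notin\{0,\dots,l-t-1\}$), $y_s\ne y_t$ for $s<t$ (so $j_t\ne j_s-(t-s)$), and $j_t\le K-1$. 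Only the last is nontrivial; it amounts to $y_t\le K-l+t-1$, which holds by pigeonhole, since otherwise $y_t,y_{t+1},\dots,y_l$ would furnish $l-t+1$ distinct elements of the $(l-t)$-element set $\{K-l+t,\dots,K-1\}$. Prepending the forward path from $\mathsf{e}$ to the endpoint of each backward path now produces a length-$K$ loop at $\mathsf{e}$; different backward paths yield different loops, and the estimate $(\mathsf{M}^K)_{\mathsf{e},\mathsf{e}}\ge(l+1)^{K-l}$ follows.
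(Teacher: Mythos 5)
Your proposal is correct and follows essentially the same route as the paper: both bound $h(X_{K,l})$ from below by exhibiting $(l+1)^{K-l}$ loops of length $K$ at $\mathsf{e}$, obtained from $K-l$ backward steps (each state staying clear of $K-1$, hence having exactly $l+1$ predecessors by \eqref{eq:pa2}) spliced onto the length-$l$ forward path from $\mathsf{e}$ to an arbitrary state that already appears in the proof of Proposition \ref{p:mixing}. Your write-up merely makes explicit two details the paper leaves implicit (the induction showing $\mathsf{a}_{-t}\subset\{0,\dots,l+t-1\}$ and the pigeonhole check $j_t\le K-1$), and both are verified correctly.
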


	\begin{proof}
Since $\Omega _{K,l}$ is topologically conjugate to $X_{K,l}$ by Proposition \ref{p:XKl}, it will suffice to prove the corresponding assertion for the \textit{SFT} $X_{K,l}$. From Proposition \ref{p:mixing} we know that $h(X_{K,l})\le \log\,(l+1)$.

Fix $K,l$ and consider the state $\mathsf{e}=\{0,\dots ,l-1\}$ in \eqref{eq:e}. We are interested in the number of paths of length $K$ in $X_{K,l}$ which begin and end in $\mathsf{e}$. For this it will be convenient to work from right to left, starting from $\mathsf{e}$: by \eqref{eq:pa2}, $\mathsf{e}$ has $l+1$ predecessors $\mathsf{a}_{-1}^{(i)},\,i=1,\dots ,l+1$, say, each of which has a maximal element $\le l$. If $l<K-1$ we can repeat this argument and obtain, for each $\mathsf{a}_{-1}^{(i)}$, $l+1$ predecessors. This second generation of predecessors has maximal elements which are all $\le l+1$. After repeating this $K-l$ times we have found a total of $(l+1)^{K-l}$ distinct allowed paths of length $K-l+1$ in $X_{K,l}$, all of which have $\mathsf{e}$ as their final state. If $v$ is such a path with initial state $\mathsf{b}$, say, we can extend this path to the left by choosing $l$ successive predecessors of $\mathsf{b}$ until we arrive at $\mathsf{e}$ (as explained in the proof of Proposition \ref{p:mixing}).

This construction results in $(l+1)^{K-l}$ distinct allowed paths of length $K+1$ in $X_{K,l}$, all of which begin and end in $\mathsf{e}$. Since we can concatenate these paths arbitrarily (overlapping in the symbol $\mathsf{e}$), we have proved that $h(X_{K,l})\ge \frac1K \log\,((l+1)^{K-l}) = \frac{K-l}{K}\log\,(l+1)$, as claimed. The last assertion is a trivial consequence of this.
	\end{proof}

	\begin{rema}
	\label{r:symmetry}
According to Proposition \ref{p:comparisons} (3), $h(\Omega _{K,l})=h(\Omega _{K,K-l})$ for every $K\ge1$ and $l=0,\dots ,K$ (where $h(\Omega _{K,l})=0$ if $l=0$ or $l=K$). This allows us to symmetrise the first inequality in Theorem \ref{t:entropy} and to conclude that
	\begin{displaymath}
\tfrac1K\cdot \max\,\bigl((K-l)\cdot \log\,(l+1), l\cdot \log\,(K-l+1)\bigr) \le h(\Omega _{K,l}) \le \log\,(l+1)
	\end{displaymath}
for every $K\ge1$ and $l=0,\dots ,K$.
	\end{rema}

 For reasons of symmetry one would also expect that $h(\Omega _{K,l})$ is maximal if $l$ lies in the middle of the range $\{0,\dots ,K\}$, i.e., if $|\frac{K}{2}-l|\le \frac{1}{2}$. Our next proposition shows that this is indeed the case.

	\begin{prop}
	\label{p:maximal}
For every $K,l$ with $K\ge2$ and $1\le l\le \frac{K}{2}$,
	\begin{equation}
	\label{eq:entOmKl}
h(\Omega _{K,l-1}) \le h(\Omega _{K,l}).
	\end{equation}
	\end{prop}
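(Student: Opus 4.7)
The plan is to transfer the question to the recoded SFTs $X_{K,l}$ via the conjugacy $\phi_{K,l}$ of Proposition \ref{p:XKl}: since $\Omega_{K,l}$ and $X_{K,l}$ are topologically conjugate, $h(\Omega_{K,l})=h(X_{K,l})$, and by Proposition \ref{p:mixing} this common value equals the logarithm of the Perron eigenvalue $\rho_{K,l}$ of the transition matrix $\mathsf{M}_{K,l}$. The proposition will therefore follow once I prove $\rho_{K,l-1}\le \rho_{K,l}$.

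The key device is the non-negative rectangular $(\mathsf{B}_{K,l}\times \mathsf{B}_{K,l-1})$-matrix $J$ with $J_{\mathsf{b},\mathsf{a}}=1$ if $\mathsf{a}\subset \mathsf{b}$ and $J_{\mathsf{b},\mathsf{a}}=0$ otherwise, and the central claim is the entrywise inequality $\mathsf{M}_{K,l}\,J\ge J\,\mathsf{M}_{K,l-1}$. Spelled out, the $(\mathsf{b},\mathsf{a})$-entry of $\mathsf{M}_{K,l}\,J$ counts the followers $\mathsf{b}'\in \mathbf{f}(\mathsf{b})$ with $\mathsf{b}'\supset \mathsf{a}$, while the $(\mathsf{b},\mathsf{a})$-entry of $J\,\mathsf{M}_{K,l-1}$ counts the predecessors $\mathsf{a}'\in \mathbf{p}(\mathsf{a})$ with $\mathsf{a}'\subset \mathsf{b}$. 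Using \eqref{eq:fa1}--\eqref{eq:pa2} I would do a case analysis on whether $0\in \mathsf{b}$, and when $0\in \mathsf{b}$ on the cardinality of $\mathsf{a}\setminus(\mathsf{b}\setminus\{0\}-1)$. In the case $0\notin \mathsf{b}$ both sides reduce to $[\mathsf{a}+1\subset \mathsf{b}]$, and when $0\in \mathsf{b}$ with $|\mathsf{a}\setminus(\mathsf{b}\setminus\{0\}-1)|\ge 1$ both sides are easily seen to equal $0$ or $1$. The only genuinely non-trivial case is $0\in \mathsf{b}$ together with $\mathsf{a}=\mathsf{b}\setminus\{0\}-1$: here every follower of $\mathsf{b}$ contains $\mathsf{a}$, giving left-hand count $K-l+1$ by \eqref{eq:fa2}, while every predecessor of $\mathsf{a}$ is contained in $\mathsf{b}$, giving right-hand count $l$ by \eqref{eq:pa2}; the required inequality $K-l+1\ge l$ is exactly the content of the hypothesis $l\le K/2$.

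Granted the matrix inequality, let $v>0$ be the Perron eigenvector of $\mathsf{M}_{K,l-1}$, so $\mathsf{M}_{K,l-1}\,v=\rho_{K,l-1}\,v$. Then $Jv$ has strictly positive entries (every $\mathsf{b}\in \mathsf{B}_{K,l}$ contains at least one $\mathsf{a}\in \mathsf{B}_{K,l-1}$), and $\mathsf{M}_{K,l}\,Jv\ge J\,\mathsf{M}_{K,l-1}\,v=\rho_{K,l-1}\,Jv$. The Collatz--Wielandt characterization of the spectral radius then yields $\rho_{K,l}\ge \rho_{K,l-1}$, whence \eqref{eq:entOmKl} follows on taking logarithms. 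The main obstacle is the case analysis establishing the matrix inequality; it is elementary but requires careful bookkeeping of the sets $\mathsf{b}\setminus\{0\}-1$ and $\{0\}\cup(\mathsf{a}+1)\cap \mathsf{A}_{K-1}$ appearing in the follower/predecessor descriptions of Proposition \ref{p:follower-predecessor}, and it is precisely in the critical sub-case above that the hypothesis $l\le K/2$ is indispensable.
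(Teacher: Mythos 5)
Your argument is correct, and it takes a genuinely different route from the paper's. The paper realises $\Omega_{K,l-1}$ and $\Omega_{K,l}$ as the subshifts $X_{K+1,l}^{[0,\varnothing]}$ and $X_{K+1,l}^{[\varnothing,K]}$ of one larger \textit{SFT} and reduces \eqref{eq:entOmKl} to the path-count inequality \eqref{eq:entX0K}, which it proves by an induction on path lengths (Lemmas \ref{l:B0}, \ref{l:BK} and the falling-factorial-weighted estimate of Lemma \ref{l:relYZ}); you instead compare the two transition matrices directly through the containment intertwiner $J$ and Perron--Frobenius. I checked your central inequality $\mathsf{M}_{K,l}J\ge J\mathsf{M}_{K,l-1}$ against Proposition \ref{p:follower-predecessor}: writing $\mathsf{b}'=\mathsf{b}\smallsetminus\{0\}$, one finds that when $0\notin\mathsf{b}$ both entries equal $[\mathsf{a}+1\subset\mathsf{b}]$, that when $0\in\mathsf{b}$ and $|\mathsf{a}\smallsetminus(\mathsf{b}'-1)|\ge1$ both entries are actually \emph{equal} (to $1$ if the difference is a singleton, to $0$ otherwise, since the relevant set $(\{0\}\cup(\mathsf{a}+1))\cap\mathsf{b}$ then has at most $l-1$ elements), and that in the single critical case $\mathsf{a}=\mathsf{b}'-1$ the entries are $K-l+1$ and $l$ by \eqref{eq:fa2} and \eqref{eq:pa2} (note $K-1\notin\mathsf{b}'-1$, so the predecessor count is exactly $l$), so the hypothesis enters only through $K-l+1\ge l$ --- precisely parallel to how the paper uses $l\le K-l$ in the base case \eqref{eq:hypothesis}. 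The Collatz--Wielandt step is sound: for $l\ge2$ Proposition \ref{p:mixing} gives a strictly positive Perron eigenvector of $\mathsf{M}_{K,l-1}$, hence $Jv>0$, and the case $l=1$ is trivial since $h(\Omega_{K,0})=0$. Your route buys a shorter and more conceptual proof that avoids the auxiliary subshifts $X_{K,l}^{[\mathsf{u},\mathsf{v}]}$ and the bookkeeping of Lemma \ref{l:relYZ} (and in fact only needs $l\le(K+1)/2$); the paper's route yields the finer term-by-term inequality $|Y_N|\le|Z_N|$ between path counts. One small caution: in your middle sub-case you should verify equality (or at least the inequality in the right direction) rather than merely noting both sides lie in $\{0,1\}$, but as indicated above the equality does hold.
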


For the proof of Proposition \ref{p:maximal} we need additional notation. For every finite set $\mathsf{u}\subset \mathbb{Z}$ and every $j\in \mathbb{Z}$ we set $\mathsf{u}+j=\{i+j:i\in \mathsf{u}\}$.

Fix $K,l$ with $K\ge 2$ and $0\le l\le K$. For any pair $\mathsf{u},\mathsf{v}$ of disjoint (and possibly empty) subsets of $\mathbb{Z}$ we set
	\begin{equation}
	\label{eq:AKluv}
\mathsf{B}_{K,l}^{[\mathsf{u},\mathsf{v}]}=
	\begin{cases}
\{\mathsf{a} \in \mathsf{B}_{K,l}:\mathsf{u}\subset \mathsf{a}\enspace \textup{and}\enspace \mathsf{a}\cap \mathsf{v}=\varnothing \} &\textup{if}\enspace (\mathsf{u}\cup\mathsf{v})\subset \mathsf{A}_{K-1},
	\\
\varnothing &\textup{otherwise},\vspace{-2mm}
	\end{cases}
	\end{equation}
and\vspace{-2mm}
	\begin{equation}
	\label{eq:XKluv}
X_{K,l}^{[\mathsf{u},\mathsf{v}]} = X_{K,l} \cap (\mathsf{B}_{K,l}^{[\mathsf{u},\mathsf{v}]})^\mathbb{Z}.
	\end{equation}
Both $\mathsf{B}_{K,l}^{[\mathsf{u},\mathsf{v}]}$ and $X_{K,l}^{[\mathsf{u},\mathsf{v}]}$ are empty whenever $|\mathsf{u}|>l$, and $\mathsf{B}_{K,l}^{[\varnothing ,\varnothing ]}=\mathsf{B}_{K,l}$ and $X_{K,l}^{[\varnothing ,\varnothing ]}=X_{K,l}$. If $|\mathsf{u}|=1$ with $\mathsf{u}=\{j\}$ for some $j\in \mathsf{A}_{K-1}$ we write $\mathsf{B}_{K,l}^{[j,\mathsf{v}]}$ and $X_{K,l}^{[j,\mathsf{v}]}$ instead of $\mathsf{B}_{K,l}^{[\{j\},\mathsf{v}]}$ and $X_{K,l}^{[\{j\},\mathsf{v}]}$. The case where $|\mathsf{v}|=1$ will be treated similarly.

\smallskip We set
	\begin{equation}
	\label{eq:Omega Kl 0K}
	\begin{gathered}
\Omega _{K,l}^{(0)} = \{\omega =(\omega _n)\in \Omega _{K,l}:\omega _n\ne0\enspace \textup{for every}\enspace n\in \mathbb{Z}\},
	\\
\Omega _{K,l}^{(K)} = \{\omega =(\omega _n)\in \Omega _{K,l}:\omega _n\ne K\enspace \textup{for every}\enspace n\in \mathbb{Z}\}.
	\end{gathered}
	\end{equation}
If $\phi =\phi _{K,l}\colon \Omega _{K,l}\longrightarrow X_{K,l}$ is the shift-equivariant isomorphism defined in \eqref{eq:phiKl}, then
	\begin{displaymath}
\phi (\Omega _{K,l}^{(0)}) = X_{K,l}^{[0,\varnothing ]}\enspace \enspace \textup{and}\enspace \enspace \phi (\Omega _{K,l}^{(K)}) = X_{K,l}^{[\varnothing ,K-1]}.
	\end{displaymath}
The reason for our interest in these subshifts is that
	\begin{displaymath}
\Omega _{K,l}^{(0)} \simeq \Omega _{K-1,l-1},\enspace \enspace \Omega _{K,l}^{(K)} \simeq \Omega _{K-1,l},
	\end{displaymath}
and hence
	\begin{displaymath}
X_{K,l}^{[0,\varnothing ]} \simeq X_{K-1,l-1}\enspace \enspace \textup{and}\enspace \enspace X_{K,l}^{[\varnothing ,K-1]}\simeq X_{K-1,l}.
	\end{displaymath}
This shows that \eqref{eq:entOmKl} is equivalent to the assertion that
	\begin{equation}
	\label{eq:entX0K}
h(X_{K+1,l}^{[0,\varnothing ]}) \le h(X_{K+1,l}^{[\varnothing ,K]})
	\end{equation}
for every $K\ge1$ and $l\le\frac{K+1}{2}$.

We set $Y=X_{K+1,l}^{[0,\varnothing ]}$, $Z=X_{K+1,l}^{[\varnothing ,K]}$, and write $Y_N$ and $Z_N$ for the set of all paths of length $N$ in $Y$ and $Z$, respectively. Since $h(Y)=\lim_{N\to\infty }\frac{1}{N}\log|Y_N|$ and $h(Z)=\lim_{N\to\infty }\frac{1}{N}\log|Z_N|$ we have to investigate the growth rates of the cardinalities $|Y_N|$ and $|Z_N|$ as $N\to\infty $.

We start with $Y_N$ and write, for all disjoint finite sets $\mathsf{u},\mathsf{v}\subset \mathbb{Z}$, $Y_N^{[\mathsf{u},\mathsf{v}]}\subset Y_N$ for the set of all paths of length $N$ ending in an element of $\mathsf{B}_{K+1,l}^{[\mathsf{u},\mathsf{v}]}$ (note that this set will be empty if $(\mathsf{u}\cup \mathsf{v})\not\subset \{1,\dots ,K\}$ or $|\mathsf{u}|\ge l$).

	\begin{lemm}
	\label{l:B0}
For every $N\ge1$ and every $\mathsf{u}\subset \{1,\dots ,K\}$,
	\begin{equation}
	\label{eq:2.6}
\bigl|Y_{N+1}^{[\mathsf{u},\varnothing ]}\bigr| = (K-l-|\mathsf{u}|+1)
\cdot \bigl|Y_N^{[\{1\}\cup (\mathsf{u}+1),\varnothing ]}\bigr| + \sum\nolimits_{j\in (\{1\}\cup (\mathsf{u}+1))} \bigl|Y_N^{[(\{1\}\cup (\mathsf{u}+1))\smallsetminus \{j\},\varnothing ]}\bigr|.
	\end{equation}
	\end{lemm}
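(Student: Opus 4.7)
The plan is to count $|Y_{N+1}^{[\mathsf{u},\varnothing]}|$ by decomposing each admissible path $\mathsf{a}^{(0)} \to \cdots \to \mathsf{a}^{(N)}$ according to its penultimate state $\mathsf{a}^{(N-1)}$ and counting the admissible choices for the final transition. Since every state of $Y=X_{K+1,l}^{[0,\varnothing]}$ contains $0$, the step $\mathsf{a}^{(N-1)} \to \mathsf{a}^{(N)}$ is always of type (M2): writing $\mathsf{b} = \mathsf{a}^{(N-1)}\smallsetminus\{0\}$, we have $\mathsf{a}^{(N)} = (\mathsf{b}-1)\cup\{j\}$ for some $j \in \mathsf{A}_K\smallsetminus(\mathsf{b}-1)$. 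The constraints $0 \in \mathsf{a}^{(N)}$ and $\mathsf{u} \subset \mathsf{a}^{(N)}$ translate into the requirement that each element of $\{0\}\cup\mathsf{u}$ either lies in $\mathsf{b}-1$ (equivalently, the corresponding element of $\mathsf{w}\coloneqq\{1\}\cup(\mathsf{u}+1)$ lies in $\mathsf{a}^{(N-1)}$) or equals $j$.

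Because $j$ is a single element it can ``cover'' at most one element of $\{0\}\cup\mathsf{u}$, and a short case analysis on $\mathsf{a}^{(N-1)}\cap\mathsf{w}$ produces exactly three scenarios. First, if $\mathsf{w}\subset\mathsf{a}^{(N-1)}$, then $\{0\}\cup\mathsf{u}\subset\mathsf{b}-1$, no covering is needed, and all $|\mathsf{A}_K\smallsetminus(\mathsf{b}-1)| = K-l+2$ choices of $j$ are admissible (they automatically lie outside $\{0\}\cup\mathsf{u}$). Second, if $\mathsf{a}^{(N-1)}$ misses exactly one element $i\in\mathsf{w}$, then $j$ is forced to equal $i-1$ and this gives a unique admissible transition. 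Third, if $\mathsf{a}^{(N-1)}$ misses two or more elements of $\mathsf{w}$, no admissible $j$ exists. Summing these contributions over all penultimate states yields
\begin{displaymath}
|Y_{N+1}^{[\mathsf{u},\varnothing]}| = (K-l+2)\,|Y_N^{[\mathsf{w},\varnothing]}| + \sum_{i\in\mathsf{w}}|Y_N^{[\mathsf{w}\smallsetminus\{i\},\{i\}]}|.
\end{displaymath}

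The final step is an inclusion-exclusion: splitting paths ending in states containing $\mathsf{w}\smallsetminus\{i\}$ according to whether the endpoint also contains $i$ yields $|Y_N^{[\mathsf{w}\smallsetminus\{i\},\{i\}]}| = |Y_N^{[\mathsf{w}\smallsetminus\{i\},\varnothing]}| - |Y_N^{[\mathsf{w},\varnothing]}|$. Substituting this, and using $|\mathsf{w}| = 1+|\mathsf{u}|$, the coefficient of $|Y_N^{[\mathsf{w},\varnothing]}|$ collapses to $(K-l+2)-(1+|\mathsf{u}|) = K-l-|\mathsf{u}|+1$, producing \eqref{eq:2.6}. The main obstacle is the bookkeeping in the second paragraph: one must carefully verify that when $\mathsf{w}\subset\mathsf{a}^{(N-1)}$ the auxiliary restriction $j\notin\{0\}\cup\mathsf{u}$ imposes nothing beyond $j\notin\mathsf{b}-1$ (which follows from $\{0\}\cup\mathsf{u}\subset\mathsf{b}-1$), and that the ``missing element'' $i$ of $\mathsf{w}$ is in bijection with the uncovered element $i-1$ of $\{0\}\cup\mathsf{u}$; once that bookkeeping is in place, the counting and the final inclusion-exclusion are routine.
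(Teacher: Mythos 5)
Your argument is correct and follows essentially the same route as the paper's: condition on the penultimate state, split into three cases according to how much of $\mathsf{w}=\{1\}\cup(\mathsf{u}+1)$ it contains (all of it, all but one element, or less), and then use the identity $\bigl|Y_N^{[\mathsf{w}\smallsetminus\{i\},\{i\}]}\bigr| = \bigl|Y_N^{[\mathsf{w}\smallsetminus\{i\},\varnothing ]}\bigr| - \bigl|Y_N^{[\mathsf{w},\varnothing ]}\bigr|$ to turn the forced-transition terms into the form appearing in \eqref{eq:2.6}. The one point the paper treats separately and you pass over silently is the degenerate case $K\in \mathsf{u}$, where $K+1\in \mathsf{w}$ lies outside the alphabet $\{0,\dots ,K\}$; your count still yields the correct formula there because every term indexed by a set containing $K+1$ vanishes, but a sentence acknowledging this would make the proof complete.
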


	\begin{proof}
If $|\mathsf{u}|\ge l$, $\mathsf{B}^{[\{0\}\cup \mathsf{u},\varnothing ]}_{K+1,l}=\varnothing $ and hence $Y_N^{[\mathsf{u},\varnothing ]}=\varnothing $ for every $N\ge 0$. If $|\mathsf{u}|<l$, every $y\in Y_{N+1}^{[\mathsf{u},\varnothing ]}$ has the form $y=(\mathsf{a}_0,\mathsf{a}_1,\dots ,\mathsf{a}_{N-1},\mathsf{a}_{N})$ with $\mathsf{a}_i\in \mathsf{B}_{K,l}^{[0,\varnothing ]}$ for $i=0,\dots ,N$, $\mathsf{a}_{N}\supset \mathsf{u}$, and $\mathsf{a}_{N-1}\in \mathbf{p}(\mathsf{a}_{N})$. Since both $\mathsf{a}_{N-1}$ and $\mathsf{a}_N$ contain $0$, Equation \eqref{eq:pa1} implies that one of the following conditions is satisfied:
	\begin{enumerate}
	\item[(i)]\label{possibilities}
$\{1\}\cup (\mathsf{u}+1) \subset \mathsf{a}_{N-1}$, in which case $|\mathbf{f}(\mathsf{a}_{N-1})| = K-l+2$ and every successor of $\mathsf{a}_{N-1}$ (including $\mathsf{a}_N$, of course) is of the form $((\mathsf{a}_{N-1}\smallsetminus \{0\})-1)\cup\{j\}$ for some $j\in \{0,\dots ,K\}\smallsetminus (\mathsf{a}_{N-1}-1)$,
	\item[(ii)]
$\{1\}\cup (\mathsf{u}+1) \not\subset \mathsf{a}_{N-1}$, but $\mathsf{a}_{N-1}\supset (\{1\}\cup (\mathsf{u}+1))\smallsetminus \{j\}$ for some $j\in \{1\}\cup (\mathsf{u}+1)$. In this case $|\mathbf{f}(\mathsf{a}_{N-1})| = 1$ and $\mathbf{f}(\mathsf{a}_{N-1})=\{\mathsf{a}_N\}$.
	\end{enumerate}
If $K\notin \mathsf{u}$, we obtain that
	\begin{align*}
\bigl|Y_{N+1}^{[\mathsf{u},\varnothing ]}\bigr|&= (K-l+2)\cdot \bigl|Y_N^{[\{1\}\cup (\mathsf{u}+1),\varnothing ]}\bigr|  +\sum\nolimits_{j\in \{1\}\cup (\mathsf{u}+1)} \bigl|Y_N^{[(\{1\}\cup (\mathsf{u}+1))\smallsetminus \{j\},j]}\bigr|
	\\
&= (K-l-|\mathsf{u}|+1)\cdot \bigl|Y_N^{[\{1\}\cup (\mathsf{u}+1),\varnothing ]}\bigr| + \sum\nolimits_{j\in \{1\}\cup (\mathsf{u}+1)} \bigl|Y_N^{[(\{1\}\cup (\mathsf{u}+1))\smallsetminus \{j\},\varnothing ]}\bigr|,
	\end{align*}
where we have used that $\bigl|Y_N^{[\mathsf{w}\cup \{j\},\varnothing ]}\bigr| + \bigl|Y_N^{[\mathsf{w},j]}\bigr| = \bigl|Y_N^{[\mathsf{w},\varnothing ]}\bigr|$ whenever $\{j\}\cup \mathsf{w}\subset \{1,\dots ,K\}$ and $j\notin \mathsf{w}$.

If $K \in \mathsf{u}$, then
	\begin{align*}
\bigl|Y_{N+1}^{[\mathsf{u},\varnothing ]}\bigr|&= (K-l+2)\cdot \bigl|Y_N^{[\{1\}\cup (\mathsf{u}+1),\varnothing ]}\bigr|  +\sum\nolimits_{j\in \{1\}\cup (\mathsf{u}+1),\,j\le K} \;\bigl|Y_N^{[(\{1\}\cup (\mathsf{u}+1))\smallsetminus \{j\},j]}\bigr|
	\\
&\qquad \enspace  +\bigl|Y_N^{[(\{1\}\cup (\mathsf{u}+1))\smallsetminus \{K+1\},\varnothing ]}\bigr|=  \bigl|Y_N^{[(\{1\}\cup (\mathsf{u}+1))\smallsetminus \{K+1\},\varnothing ]}\bigr|,
	\end{align*}
since \eqref{eq:AKluv} -- \eqref{eq:XKluv} guarantee that all other expressions in the middle term of this equation vanish. In either case \eqref{eq:2.6} is satisfied, so that the lemma is proved.
	\end{proof}

In order to prove an analogous recursion formula for $Z=X_{K+1,l}^{[\varnothing ,K]}$ we denote by $Z_N^{[\mathsf{v},\varnothing ]}\subset Z_N$ the set of all paths of length $N$ which \textit{begin} with an element of $\mathsf{B}_{K+1,l}^{[\mathsf{v},K]}$ for some finite set $\mathsf{v}\subset \mathbb{Z}$. Note that $\mathsf{B}_{K+1,l}^{[\mathsf{v},K]}=\varnothing $ whenever $|\mathsf{v}|>l$ or $\mathsf{v}\not\subset \{0,\dots ,K-1\}$.

	\begin{lemm}
	\label{l:BK}
For every $N\ge1$ and every $\mathsf{v}\subset \{0,\dots ,K-1\}$ with $|\mathsf{v}|\le l$,
	\begin{equation}
	\begin{aligned}
	\label{eq:2.8}
\bigl|Z_{N+1}^{[\varnothing ,\mathsf{v}]}\bigr| &= (l-|\mathsf{v}|) \cdot \bigl|Z_N^{[\varnothing ,\{K-1\}\cup (\mathsf{v}-1)]}\bigr|
	\\
&\qquad + \sum\nolimits_{j\in (\{K-1\}\cup (\mathsf{v}-1))} \bigl|Z_N^{[\varnothing ,(\{K-1\}\cup (\mathsf{v}-1))\smallsetminus \{j\}]}\bigr|.
	\end{aligned}
	\end{equation}
If $0\in \mathsf{v}$, \eqref{eq:2.8} reduces to
	\begin{displaymath}
\bigl|Z_{N+1}^{[\varnothing ,\mathsf{v}]}\bigr| = \bigl|Z_N^{[\varnothing ,\{K-1\}\cup (\mathsf{v}'-1)]}\bigr|,
	\end{displaymath}
where $\mathsf{v}'=\mathsf{v}\smallsetminus \{0\}$.
	\end{lemm}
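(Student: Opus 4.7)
The plan is to mirror the proof of Lemma \ref{l:B0} by time-reversal: instead of classifying the last transition of a path in $Y$, I will classify the \emph{first} transition $\mathsf{a}_0\to \mathsf{a}_1$ of a path $(\mathsf{a}_0,\dots ,\mathsf{a}_N)\in Z_{N+1}^{[\varnothing ,\mathsf{v}]}$. Parameterising each such path by $\mathsf{a}_1$, the count decomposes as
\begin{displaymath}
\bigl|Z_{N+1}^{[\varnothing ,\mathsf{v}]}\bigr|=\sum\nolimits _{\mathsf{a}_1}N^{\textup{pred}}(\mathsf{a}_1)\cdot \bigl(\textup{\# length-}N\textup{ paths in }Z\textup{ starting at }\mathsf{a}_1\bigr),
\end{displaymath}
where $N^{\textup{pred}}(\mathsf{a}_1)$ is the number of $\mathsf{a}_0\in \mathbf{p}(\mathsf{a}_1)$ that lie in the alphabet of $Z$ (i.e., satisfy $K\notin \mathsf{a}_0$) and that also avoid $\mathsf{v}$. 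The task thus reduces to computing $N^{\textup{pred}}(\mathsf{a}_1)$ explicitly.

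By \eqref{eq:pa1}, the predecessors of $\mathsf{a}_1$ in $\mathsf{B}_{K+1,l}$ are precisely the sets $S\smallsetminus \{j\}$ with $j\in S=\{0\}\cup (\mathsf{a}_1+1)$, and $S\smallsetminus \{j\}$ is a valid $\mathsf{a}_0$ if and only if $(\{K\}\cup \mathsf{v})\cap S\subset \{j\}$. Setting $W=\{K-1\}\cup ((\mathsf{v}\smallsetminus \{0\})-1)$, the bijection $u\mapsto u+1$ identifies $W\cap \mathsf{a}_1$ with $(\{K\}\cup (\mathsf{v}\smallsetminus \{0\}))\cap S$, while $0\in S$ contributes to $(\{K\}\cup \mathsf{v})\cap S$ precisely when $0\in \mathsf{v}$. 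Hence $|(\{K\}\cup \mathsf{v})\cap S|=|W\cap \mathsf{a}_1|+\varepsilon $, where $\varepsilon =1$ if $0\in \mathsf{v}$ and $\varepsilon =0$ otherwise. In the case $0\notin \mathsf{v}$ this yields $N^{\textup{pred}}(\mathsf{a}_1)=l+1$ when $W\cap \mathsf{a}_1=\varnothing $, $N^{\textup{pred}}(\mathsf{a}_1)=1$ when $|W\cap \mathsf{a}_1|=1$, and $N^{\textup{pred}}(\mathsf{a}_1)=0$ otherwise; in the case $0\in \mathsf{v}$, $N^{\textup{pred}}(\mathsf{a}_1)=1$ when $W\cap \mathsf{a}_1=\varnothing $ and $N^{\textup{pred}}(\mathsf{a}_1)=0$ otherwise.

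Summing over $\mathsf{a}_1$ in the case $0\notin \mathsf{v}$ gives
\begin{displaymath}
\bigl|Z_{N+1}^{[\varnothing ,\mathsf{v}]}\bigr|=(l+1)\bigl|Z_N^{[\varnothing ,W]}\bigr|+\sum\nolimits _{j\in W}\bigl|Z_N^{[j,W\smallsetminus \{j\}]}\bigr|,
\end{displaymath}
and the partition identity $\bigl|Z_N^{[\varnothing ,W\smallsetminus \{j\}]}\bigr|=\bigl|Z_N^{[\varnothing ,W]}\bigr|+\bigl|Z_N^{[j,W\smallsetminus \{j\}]}\bigr|$, used exactly as in the final step of the proof of Lemma \ref{l:B0}, together with $|W|=|\mathsf{v}|+1$ (since $\{K-1\}$ and $\mathsf{v}-1\subset \{0,\dots ,K-2\}$ are disjoint when $0\notin \mathsf{v}$), rewrites this as the right-hand side of \eqref{eq:2.8}. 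The reduced formula for $0\in \mathsf{v}$ follows immediately from the count of $N^{\textup{pred}}$ above; equivalently, \eqref{eq:2.8} itself remains valid under the convention \eqref{eq:AKluv} --- by which $\mathsf{B}_{K+1,l}^{[\varnothing ,\mathsf{w}]}=\varnothing $ whenever $\mathsf{w}\not\subset \mathsf{A}_K$ --- because every term on its right-hand side whose forbidden set contains the out-of-range element $-1\in \mathsf{v}-1$ then drops out, leaving only the $j=-1$ summand. The main obstacle will be the careful bookkeeping in this case analysis: separating the contributions of the (M1)-transitions ($0\notin \mathsf{a}_0$) from those of the (M2)-transitions ($0\in \mathsf{a}_0$), and verifying that the shift $W\cap \mathsf{a}_1\leftrightarrow (\{K\}\cup \mathsf{v})\cap S$ really is a bijection once the distinguished roles of $0$ and $K$ are accounted for.
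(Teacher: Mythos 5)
Your proof is correct, but it takes a different route from the paper's. The paper does not redo any counting: it invokes the time-reversal-plus-complementation conjugacy $\Psi _{K+1,l}\colon X_{K+1,l}\to X_{K+1,K+1-l}$ of Remark \ref{r:comparisons}, observes that $\Psi _{K+1,l}(Z_N^{[\varnothing ,\mathsf{v}]})=\overline{Y}_N^{[\bar{\mathsf{v}},\varnothing ]}$ with $\overline{Y}=X_{K+1,K-l+1}^{[0,\varnothing ]}$ and $\bar{\mathsf{v}}=\{K-v:v\in \mathsf{v}\}$, and then simply transports the recursion of Lemma \ref{l:B0} through the bar operation (the coefficient $K-l-|\mathsf{u}|+1$ of Lemma \ref{l:B0} becomes $l-|\mathsf{v}|$ after the substitution $l\mapsto K-l+1$). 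You instead classify the first transition $\mathsf{a}_0\to \mathsf{a}_1$ directly: using \eqref{eq:pa1} you identify the admissible $\mathsf{a}_0$ with the choices of $j\in S=\{0\}\cup (\mathsf{a}_1+1)$ for which $S\cap (\{K\}\cup \mathsf{v})\subset \{j\}$, and your translation of this condition into $|W\cap \mathsf{a}_1|$ with $W=\{K-1\}\cup ((\mathsf{v}\smallsetminus \{0\})-1)$ is accurate (including the extra $\varepsilon $ from $0\in S$ when $0\in \mathsf{v}$), as is the final rewriting via the partition identity $|Z_N^{[\varnothing ,W\smallsetminus \{j\}]}|=|Z_N^{[\varnothing ,W]}|+|Z_N^{[j,W\smallsetminus \{j\}]}|$ and $|W|=|\mathsf{v}|+1$. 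Your treatment of the case $0\in \mathsf{v}$, where the convention \eqref{eq:AKluv} kills every term except the $j=-1$ summand, also agrees with the paper's statement. What your approach buys is self-containedness --- it gives an independent confirmation of the coefficient $(l-|\mathsf{v}|)$ without relying on Remark \ref{r:comparisons} --- at the cost of repeating, in mirror image, the case analysis already carried out for Lemma \ref{l:B0}; the paper's argument is shorter but requires trusting that $\Psi _{K+1,l}$ intertwines the decorated path sets exactly as claimed. The only cosmetic point is that you use the symbol $Z_N^{[j,W\smallsetminus \{j\}]}$, which extends the paper's notation (only $Z_N^{[\varnothing ,\cdot ]}$ is defined there), but the intended meaning is unambiguous.
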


	\begin{proof}
For every $\mathsf{a}\in \mathsf{B}_{K,l}$ we set $\bar{\mathsf{a}}=\{K-a:a\in \mathsf{a}\}$. We recall the the definition of the isomorphism $\Psi_{K+1,l}\colon X_{K+1,l}\longrightarrow X_{K+1,K+1-l}$ in \eqref{eq:Psi} and note that $\Psi_{K+1,l}(Z_N^{[\varnothing ,\mathsf{v}]})=\overline{Y}_N^{[\bar{\mathsf{v}}, \varnothing]}$, where $\overline{Y}= X_{K+1,K-l+1}^{[0,\varnothing]}$. In particular, $| Z_M^{[\varnothing ,\mathsf{v}]} |=|\overline{Y}_M^{[\bar{\mathsf{v}},\varnothing]}|$ for every $M \ge 1$.

Assume for the moment that $0\notin \mathsf{v}$. From Lemma \ref{l:B0} it follows that
	\begin{align*}
\bigl|Z_{N+1}^{[\varnothing ,\mathsf{v}]}&\bigr|= \bigl|\overline{Y}_{N+1}^{[\bar{\mathsf{v}} ,\varnothing ]}\bigr| = (l-|\mathsf{v}|)\cdot \bigl|\overline{Y}_N^{[\{1\}\cup (\bar{\mathsf{v}}+1),\varnothing ]}\bigr|
	\\
&\qquad \qquad \qquad \qquad \qquad + \sum\nolimits_{j\in (\{1\}\cup (\bar{\mathsf{v}}+1))} \bigl|\overline{Y}_N^{[(\{1\}\cup (\bar{\mathsf{v}}+1))\smallsetminus \{j\},\varnothing ]}\bigr|
	\\
&= (l-|\mathsf{v}|)\cdot \bigl|Z_N^{[\varnothing, \overline{\{1\}\cup (\bar{\mathsf{v}}+1)} ]}\bigr| + \sum\nolimits_{j\in (\{1\}\cup (\bar{\mathsf{v}}+1))} \bigl|Z_N^{[\varnothing, \overline{(\{1\}\cup (\bar{\mathsf{v}}+1))\smallsetminus \{j\}} ]}\bigr|
	\\
&= (l-|\mathsf{v}|)\cdot \bigl|Z_N^{[\varnothing, \overline{\{1\}}\cup \overline{\bar{\mathsf{v}}+1} ]}\bigr| + \sum\nolimits_{j\in (\{1\}\cup (\bar{\mathsf{v}}+1))} \bigl|Z_N^{[\varnothing, (\overline{\{1\}}\cup \overline{\bar{\mathsf{v}}+1})\smallsetminus \overline{\{j\}} ]}\bigr|
	\\
&= (l-|\mathsf{v}|)
\cdot \bigl|Z_N^{[\varnothing ,\{K-1\}\cup (\mathsf{v}-1)]}\bigr|+ \sum\nolimits_{j\in (\{K-1\}\cup (\mathsf{v}-1))} \bigl|Z_N^{[\varnothing ,(\{K-1\}\cup (\mathsf{v}-1))\smallsetminus \{j\}]}\bigr|,
	\end{align*}
where we have used the facts that $\overline{\bar{\mathsf{v}}}=\mathsf{v},\;\overline{\bar{\mathsf{v}}+1}=K-(K-\mathsf{v}+1)=\mathsf{v}-1, \; \overline{\mathsf{u} \cup \mathsf{v}}=\bar{\mathsf{u}} \cup \bar{\mathsf{v}} ,\; \overline{\mathsf{u} \smallsetminus \mathsf{v}}=\bar{\mathsf{u}} \smallsetminus \bar{\mathsf{v}}$ and $|\mathsf{v}|=|\bar{\mathsf{v}}|$.

If $0\in \mathsf{v}$, then $K\in \bar{\mathsf{v}}$, and \eqref{eq:AKluv} -- \eqref{eq:XKluv} imply that
	\begin{align*}
\bigl|Z_{N+1}^{[\varnothing ,\mathsf{v} ]}\bigr|&= \bigl|\overline{Y}_{N+1}^{[\bar{\mathsf{v}} ,\varnothing ]}\bigr| = \bigl|\overline{Y}_N^{[(\{1\}\cup (\bar{\mathsf{v}}+1))\smallsetminus \{K+1\},\varnothing ]}\bigr| = \bigl|Z_N^{[\varnothing, \overline{(\{1\}\cup (\bar{\mathsf{v}}+1))\smallsetminus \{K+1\}} ]}\bigr|
	\\
&= \bigl|Z_N^{[\varnothing, \overline{\{1\}\cup ((\bar{\mathsf{v}}+1)\smallsetminus \{K+1\})} ]}\bigr| = \bigl|Z_N^{[\varnothing ,\{K-1\}\cup (\mathsf{v}'-1)]}\bigr|.
	\end{align*}
This proves \eqref{eq:2.8}.
	\end{proof}

Finally we investigate the relation between $Y_N^{[\cdot ,\varnothing]}$ and $Z_N^{[\varnothing, \cdot ]}$. We prove the following statement by induction on $N$.

	\begin{lemm}
	\label{l:relYZ}
For every $N\ge1, 1 \le l \le \frac{K}{2}, 0 \le m\le l$ and every $\mathsf{u}\subset \{1,\dots ,K\}$ with $|\mathsf{u}|=m$,
	\begin{equation}
	\label{eq:relYZ}
[K-l+1]_m \cdot \bigl|Y_{N}^{[\mathsf{u},\varnothing ]}\bigr| \le [l]_m \cdot \bigl|Z_N^{[\varnothing, \bar{\mathsf{u}}]}\bigr|
	\end{equation}
where
	\begin{displaymath}
\smash[t]{[x]_m=
	\begin{cases}x\cdot (x-1) \cdot \dots \cdot (x-m+1)&\textup{if}\enspace m\ge1,
	\\
1&\textup{if}\enspace m=0.
	\end{cases}}
	\end{displaymath}
	\end{lemm}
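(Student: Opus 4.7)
The plan is to prove the inequality \eqref{eq:relYZ} by induction on $N\ge 1$, exploiting the recursions in Lemmas~\ref{l:B0}--\ref{l:BK} together with the matching relations
\[
[K-l+1]_{m+1} = (K-l-m+1)\cdot[K-l+1]_m,\qquad [l]_{m+1} = (l-m)\cdot[l]_m
\]
for the falling factorials.

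For the base case $N=1$, I would compute both sides directly. Since every state of $Y$ contains $0$ and $0\notin\mathsf{u}$, one has $|Y_1^{[\mathsf{u},\varnothing]}| = \binom{K-m}{l-m-1}$ (choose the remaining $l-m-1$ elements from $\{1,\dots,K\}\smallsetminus\mathsf{u}$); since $0\notin\mathsf{u}$ implies $K\notin\bar{\mathsf{u}}$, one has $|Z_1^{[\varnothing,\bar{\mathsf{u}}]}| = \binom{K-m}{l}$ (choose $l$ elements from $\{0,\dots,K-1\}\smallsetminus\bar{\mathsf{u}}$). After cancellation, the inequality reduces to $l-m \le K-l+1-m$, i.e.\ to $2l\le K+1$, which is precisely what the hypothesis $l\le \tfrac{K}{2}$ provides. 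This is the \emph{only} place where $l\le K/2$ enters the argument.

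For the induction step in the typical case $K\notin\mathsf{u}$, set $\mathsf{w} = \{1\}\cup(\mathsf{u}+1)$, so that $|\mathsf{w}|=m+1$ and $\bar{\mathsf{w}} = \{K-1\}\cup(\bar{\mathsf{u}}-1)$. Applying Lemma~\ref{l:B0} to $|Y_{N+1}^{[\mathsf{u},\varnothing]}|$ and multiplying through by $[K-l+1]_m$, the leading coefficient becomes $(K-l-m+1)[K-l+1]_m = [K-l+1]_{m+1}$. The induction hypothesis at $N$ (applied with set-size $m+1$ to the leading term and with size $m$ to each summand) then converts everything to $Z$-quantities. Using $(l-m)[l]_m = [l]_{m+1}$ in reverse, together with the identity $\overline{\mathsf{w}\smallsetminus\{j\}} = \bar{\mathsf{w}}\smallsetminus\{K-j\}$ to re-index the sum, one recognises the result as exactly $[l]_m$ times the first-clause expansion of $|Z_{N+1}^{[\varnothing,\bar{\mathsf{u}}]}|$ given by Lemma~\ref{l:BK} (the second clause of that lemma is irrelevant here because $0\notin\bar{\mathsf{u}}$). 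In the boundary case $K\in\mathsf{u}$ (equivalently $0\in\bar{\mathsf{u}}$), both recursions collapse to a single term: writing $\mathsf{w}' = (\{1\}\cup(\mathsf{u}+1))\smallsetminus\{K+1\}$, one has $|Y_{N+1}^{[\mathsf{u},\varnothing]}| = |Y_N^{[\mathsf{w}',\varnothing]}|$ from Lemma~\ref{l:B0} and $|Z_{N+1}^{[\varnothing,\bar{\mathsf{u}}]}| = |Z_N^{[\varnothing,\bar{\mathsf{w}'}]}|$ from the second clause of Lemma~\ref{l:BK}, since $\bar{\mathsf{w}'} = \{K-1\}\cup((\bar{\mathsf{u}}\smallsetminus\{0\})-1)$. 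The induction hypothesis applied to $\mathsf{w}'$ (with $|\mathsf{w}'|=m$) then closes the step immediately.

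The chief obstacle is the combinatorial bookkeeping under the involution $\mathsf{u}\mapsto\bar{\mathsf{u}}$ and the interplay between the shifts $\mathsf{u}\mapsto\mathsf{u}+1$ and $\bar{\mathsf{u}}\mapsto\bar{\mathsf{u}}-1$: one must verify that the $Y$- and $Z$-recursions produce structurally identical sums once weighted by the falling factorials, and that the two boundary degenerations — the disappearance of the leading term of Lemma~\ref{l:B0} when $K\in\mathsf{u}$, and the switch to the second clause of Lemma~\ref{l:BK} when $0\in\bar{\mathsf{u}}$ — occur in perfect lockstep. The fact that $l\le K/2$ is needed only for the base case, and that the induction step reduces to an identity of polynomials in $l,K,m$, is the key insight that makes the proof go through cleanly.
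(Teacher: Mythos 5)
Your proposal is correct and follows essentially the same route as the paper: the identical base-case computation $|Y_1^{[\mathsf{u},\varnothing]}|=\binom{K-m}{l-m-1}$, $|Z_1^{[\varnothing,\bar{\mathsf{u}}]}|=\binom{K-m}{l}$ (where $l\le K/2$ is used), followed by induction on $N$ via Lemmas \ref{l:B0} and \ref{l:BK} and the falling-factorial identities. Your explicit treatment of the boundary case $K\in\mathsf{u}$ is a detail the paper absorbs into the conventions of \eqref{eq:AKluv}--\eqref{eq:XKluv}, but it is not a different argument.
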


	\begin{proof}
Let $N=1$. Then $|Y_1^{[\mathsf{u},\varnothing ]}|= \binom{K-m}{l-m-1}$, since we are choosing $l-1-|\mathsf{u}|$ elements in the set $\{1,\dots ,K\}\smallsetminus \mathsf{u}$. Similarly we see that $|Z_1^{[\varnothing ,\bar{\mathsf{u}}]}| = \binom{K-m}{l}$. Then
	\begin{equation}
	\label{eq:hypothesis}
	\begin{aligned}
[K-l+1]_m \cdot \bigl|Y_{1}^{[\mathsf{u},\varnothing ]}\bigr| & =[K-l+1]_m \cdot \tbinom{K-m}{l-m-1} = \tfrac{(K-m)!}{(l-m-1)!(K-l-m+1)!}
	\\
&\le \tfrac{(K-m)!}{(l-m)!(K-l-m)!}= [l]_m \cdot   \tbinom{K-m}{l}=  [l]_m \cdot \bigl|Z_1^{[\varnothing, \bar{\mathsf{u}}]}\bigr|,
	\end{aligned}
	\end{equation}
where we have used the assumption $l \le K-l$. By using \eqref{eq:hypothesis} as our induction hypothesis, applying the Lemmas \ref{l:B0} and \ref{l:BK}, and remembering that $|\mathsf{u}|=|\bar{\mathsf{u}}|=m$, we get that
	\begin{align*}
[K-l&+1]_m \cdot \bigl|Y_{N+1}^{[\mathsf{u},\varnothing ]}\bigr|=[K-l+1]_{m+1} \cdot \bigl|Y_N^{[\{1\}\cup (\mathsf{u}+1),\varnothing ]}\bigr|
	\\
&\qquad \quad + \sum\nolimits_{j\in (\{1\}\cup (\mathsf{u}+1))} [K-l+1]_{m} \cdot \bigl|Y_N^{[(\{1\}\cup (\mathsf{u}+1))\smallsetminus \{j\},\varnothing ]}\bigr|
	\\
&\le [l]_{m+1}\cdot \bigl|Z_N^{[\varnothing, \overline{\{1\}\cup (\mathsf{u}+1)}]}\bigr| +  \sum\nolimits_{j\in \{1\}\cup (\mathsf{u}+1)} [l]_m \cdot \bigl|Z_N^{[\varnothing, \overline{(\{1\}\cup (\mathsf{u}+1))\smallsetminus \{j\}} ]}\bigr|
	\\
&= [l]_m \big\{ (l-|\bar{\mathsf{u}}|)\cdot \bigl|Z_N^{[\varnothing, \{K-1\}\cup (\bar{\mathsf{u}}-1) ]}\bigr|
	\\
&\qquad \quad+ \sum\nolimits_{j\in (\{K-1\}\cup (\bar{\mathsf{u}}-1))} \bigl|Z_N^{[\varnothing, \{K-1\}\cup (\bar{\mathsf{u}}-1)\smallsetminus \{j\} ]}\bigr| \big\}= [l]_m \cdot \bigl|Z_{N+1}^{[\varnothing, \bar{\mathsf{u}}]}\bigr|.\qedhere
	\end{align*}
	\end{proof}

	\begin{proof}[Proof of Proposition \ref{p:maximal}]
By taking $m=0$ (and hence $\mathsf{u}=\varnothing$) in Lemma \ref{l:relYZ} we obtain that $|Y_N|=\bigl|Y_{N}^{[\varnothing ,\varnothing ]}| \le \bigl|Z_{N}^{[\varnothing ,\varnothing ]}\bigr|=|Z_N|$ for every $N\ge 2$. As noted in the penultimate paragraph before Lemma \ref{l:B0} this guarantees that
\[ h(X_{K+1,l}^{[0,\varnothing ]})= h(Y)=\lim_{N\to\infty }\frac{1}{N}\log|Y_N| \le \lim_{N\to\infty }\frac{1}{N}\log|Z_N|=h(Z) = h(X_{K+1,l}^{[\varnothing ,K]}) \]
for every $K \ge 1$ and $l \le \frac{K+1}{2}$. We have proved \eqref{eq:entX0K} or, equivalently, \eqref{eq:entOmKl}.
	\end{proof}

\section{The parity cocycle}\label{s:parity}

If $\omega =(\omega _k)\in \Omega _K$ is a periodic point with period $p$, say, then
	\begin{displaymath}
\pi ^{(\omega )}_{(0,p)}\coloneqq\bigl(\tilde{\omega }_0\,(\textup{mod}\,p),\dots ,\tilde{\omega }_{0+p-1}\,(\textup{mod}\,p)\bigr)
	\end{displaymath}
is a permutation of $(0,\dots ,p-1)$ (cf. Lemma \ref{l:subshift} (3)). What is the parity (or sign) of this permutation? In this section we prove that these parities are determined by the function $a\colon \Omega _K\longrightarrow \mathbb{Z}$ in \eqref{eq:a} and a continuous cocycle $\mathsf{s}\colon \mathbb{Z}\times \Omega _K\longrightarrow \{\pm1\}$ for the shift $\sigma $ on $\Omega _K$.

	\begin{theo}
	\label{t:parity}
Let $K\ge1$, define $c\colon \Omega _K\longrightarrow \mathbb{Z}$ by
	\begin{equation}
	\label{eq:c1}
c(\omega ) = \bigl|\bigl\{k<0:\tilde{\omega }_k>\tilde{\omega }_0\bigr\}\bigr|= \bigl|\bigl\{k=-K+1,\dots ,-1:\tilde{\omega }_k>\tilde{\omega }_0\bigr\}\bigr|,
	\end{equation}
and let $\mathsf{c}\colon \mathbb{Z}\times \Omega _K\longrightarrow \mathbb{Z}$ be given by
	\begin{equation}
	\label{eq:c}
\mathsf{c}(n,\omega )=
	\begin{cases}
\sum_{k=0}^{n-1}c(\sigma ^k\omega )&\textup{if}\enspace n>0,
	\\
0&\textup{if}\enspace n=0,
	\\
-\mathsf{c}(-n,\sigma ^n\omega )&\textup{if}\enspace n<0.
	\end{cases}
	\end{equation}
Consider the multiplicative group $C_2\coloneqq\{\pm1\}\subset \mathbb{R}$ and define $s\colon \Omega _K\longmapsto C_2$ and $\mathsf{s}\colon \mathbb{Z}\times \Omega _K\longrightarrow C_2$ by setting $s(\omega )=(-1)^{c(\omega )+a(\omega )}$ and
	\begin{equation}
	\label{eq:s}
\smash{\mathsf{s}(n,\omega )=(-1)^{\mathsf{c}(n,\omega ) + na(\omega )} = \prod\nolimits_{k=0}^{n-1} s(\sigma ^k\omega ).}
	\end{equation}
Then $\mathsf{s}$ satisfies the cocycle equation
	\begin{equation}
	\label{eq:cocycle}
\mathsf{s}(m+n,\omega )=\mathsf{s}(m,\sigma ^n\omega )\mathsf{s}(n,\omega  )
	\end{equation}
for every $m,n\in\mathbb{Z}$ and $\omega \in \Omega _K$. Furthermore, if $\omega \in \Omega _{K}$ is periodic with period $p$, then the parity of the permutation $\pi ^{(\omega )}_{(0,p)}$ defined in Lemma \ref{l:subshift} \textup{(3)} is given by
	\begin{equation}
	\label{eq:sign}
\textup{sgn}\,\pi ^{(\omega )}_{(0,p)} = (-1)^{a(\omega )}\,\mathsf{s}(p,\omega ).
	\end{equation}
	\end{theo}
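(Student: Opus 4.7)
The cocycle identity \eqref{eq:cocycle} is immediate from the product formula in \eqref{eq:s}, so I would dispose of it in one sentence and concentrate on \eqref{eq:sign}. For the latter I would first assume $p>K$ (the generic case used in Figure~\ref{f:figure1}); the case $p\le K$ is recovered at the end by passing to a longer period. Writing $\tilde\omega_i = q_i p + r_i$ with $r_i = \tilde\omega_i \bmod p$, the bound $\tilde\omega_i\in [i,i+K]$ forces $q_i\in\{0,1\}$, so $[0,p)$ partitions into $I_s = \{i : q_i=s\}$ with corresponding residue sets $R_s = \{r_i : i\in I_s\}$ partitioning $[0,p)$. Using \eqref{eq:a} and the shift-periodicity $\tilde\omega_{i-p} = \tilde\omega_i - p$, one identifies $|I_1| = a(\omega) = l$ and $R_1\subset[0,K-1]$.

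The sign equals $\textup{sgn}\,\pi^{(\omega)}_{(0,p)} = (-1)^N$ with $N = |\{(i,j) : 0\le i<j<p,\ r_i>r_j\}|$. Comparing with the unreduced inversion count $N_0 = |\{(i,j) : 0\le i<j<p,\ \tilde\omega_i>\tilde\omega_j\}|$, the discrepancies split as $N_0 - N = A_+ - A_-$, where $A_+ = |\{(i,j) : i<j,\ q_i>q_j,\ r_i<r_j\}|$ and $A_- = |\{(i,j) : i<j,\ q_i<q_j,\ r_i>r_j\}|$; in particular $N \equiv N_0 + A_+ + A_- \pmod 2$. Splitting the inner range in $\mathsf{c}(p,\omega) = \sum_{m=0}^{p-1} c(\sigma^m\omega)$ into $j\ge 0$ and $j<0$ parts yields $\mathsf{c}(p,\omega) = N_0 + D$, with $D = \sum_{m=0}^{p-1} |\{j<0 : \tilde\omega_j > \tilde\omega_m\}|$, hence $N \equiv \mathsf{c}(p,\omega) + (A_+ + A_- + D) \pmod 2$.

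The key combinatorial identity, which I regard as the main obstacle, is $A_+ + A_- + D = (p-l)\,l$; I would prove it by exhibiting both summands as complementary counts in $R_0 \times R_1$. A short case analysis, splitting on which of $i,j$ lies in $I_0$ versus $I_1$, yields $A_+ + A_- = |\{(u,v)\in R_0\times R_1 : u>v\}|$. For $D$, I would introduce $V = \{\tilde\omega_j : j\in[-K,-1],\ \tilde\omega_j\ge 0\}$; shifting by $p$ and using $R_1\subset[0,K-1]$ identifies $V = R_1$. Then for each $v\in V$ the equality $|\{m\in[0,p) : \tilde\omega_m<v\}| = |R_0\cap[0,v-1]|$ (which holds because $v\le K-1<p$) gives $D = |\{(u,v)\in R_0\times R_1 : u<v\}|$. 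Since $R_0\cap R_1 = \varnothing$, the two disjoint counts sum to $|R_0|\cdot|R_1| = (p-l)l$.

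Because $(p-l)l - (p+1)l = -l(l+1)$ is even, this yields $N \equiv \mathsf{c}(p,\omega) + (p+1)l \pmod 2$, which after comparison with $\mathsf{s}(p,\omega) = (-1)^{\mathsf{c}(p,\omega)+pl}$ gives \eqref{eq:sign} for $p>K$. For $p\le K$, note that $\omega$ is periodic of period $kp$ for every $k\ge 1$. Writing $[0,kp) = [0,p)\times[0,k)$ via $j = a+bp$ factors $\pi^{(\omega)}_{(0,kp)}$ as the product of $\pi^{(\omega)}_{(0,p)}\times\textup{id}_{[0,k)}$ with a cyclic shift of the second coordinate by $q_a\bmod k$. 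Because every cyclic shift in $\mathbb{Z}/k\mathbb{Z}$ has sign $+1$ when $k$ is odd (then $k-\gcd(s,k)$ is even), choosing $k$ odd with $kp>K$ gives $\textup{sgn}\,\pi^{(\omega)}_{(0,kp)} = \textup{sgn}\,\pi^{(\omega)}_{(0,p)}$ and $\mathsf{s}(kp,\omega) = \mathsf{s}(p,\omega)$, reducing the remaining case to the one already proved.
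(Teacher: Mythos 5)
Your proof is correct, and while it targets the same congruence as the paper's --- that the inversion count of $\pi^{(\omega)}_{(0,p)}$ differs from $\mathsf{c}(p,\omega)$ by $a(\omega)(p+1)$ modulo $2$ --- it gets there by a genuinely different route. The paper argues geometrically from Figure~\ref{f:figure1}: it folds the triangle $\tilde A'$ back into the square $Q$ via $\mathbf a\mapsto\mathbf a^*$ and compares the inversion sets $\mathscr I(p)$ and $\mathscr J(p)$ region by region in four cases, the correction term $a(\omega)(p+a(\omega))$ emerging from the cardinalities of $\tilde B\times\tilde A$, $\tilde A^*\times\tilde C$ and $\tilde D\times\tilde A'$. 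Your quotient--remainder decomposition $\tilde\omega_i=q_ip+r_i$ is exactly that folding written in coordinates, and your identity $A_++A_-+D=|R_0|\cdot|R_1|=(p-l)l$ compresses the paper's three regional counts into a single bijective statement about $R_0\times R_1$ (with $V=R_1$ playing the role of the paper's observation $\tilde A'=\tilde A+(p,p)$); this is arguably cleaner, since it dispenses with the picture and the enumeration of regions altogether. You also gain something the paper omits: its argument is carried out only under the standing assumption $p>K$, whereas your passage to an odd multiple $kp>K$ of the period covers $p\le K$ as well. The one step I would spell out there is that $\pi^{(\omega)}_{(0,p)}\times\textup{id}_{[0,k)}$ is a disjoint union of $k$ copies of $\pi^{(\omega)}_{(0,p)}$ and hence has sign $\bigl(\textup{sgn}\,\pi^{(\omega)}_{(0,p)}\bigr)^k$; oddness of $k$ is needed at this point too, not only for the cyclic shifts of the second coordinate.
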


	\begin{proof}
The only statement requiring verification is \eqref{eq:sign}. We fix $\omega \in \Omega _K$, $p>K\ge 2$ and use the conventions of Figure~\ref{f:figure1}. We call a pair $(\mathbf{a}=(a_1,a_2),\mathbf{b}=(b_1,b_2))\in \mathbb{Z}^2$ an \textit{inversion} if $a_1<b_1$ and $a_2>b_2$ (i.e., if $\mathbf{b}$ lies above and to the right of $\mathbf{a}$ in Figure~\ref{f:figure1}). Let
	\begin{equation}
	\label{eq:inversions1}
\mathscr{I}(p) = \{(\mathbf{a},\mathbf{b})\in (\tilde{Q}\cup \tilde{A}')\times S(\omega ):(\mathbf{a},\mathbf{b})\enspace \textup{is an inversion}\}.
	\end{equation}
According to \eqref{eq:c}, $c(\omega )=|\{\mathbf{b}\in S(\omega ):((\omega _0,0),\mathbf{b})\;\textup{is an inversion}\}|$. Hence
	\begin{displaymath}
\mathsf{c}(p,\omega ) = |\mathscr{I}(p)|.
	\end{displaymath}

Now suppose that $\omega $ is periodic with period $p>K$. Then $S(\omega )$ is invariant under translation by $(p,p)$ (which in Figure~\ref{f:figure1} moves every point $p$ steps down and to the right), and $\tilde{A}'=\tilde{A}+(p,p)$.

We populate $A^*$ by translating all points in $\tilde{A}'$ into $A^*$ by adding $(-p,0)$ (or, equivalently, by adding $(0,p)$ to all points in $\tilde{A}$), leaving the points in $\tilde{Q}$ unchanged: for $\mathbf{a}\in \tilde{A} \cup \tilde{Q} \cup \tilde{A}'$ we put
	\begin{displaymath}
\mathbf{a}^*=
	\begin{cases}
\mathbf{a}&\textup{if}\enspace \mathbf{a}\in \tilde{Q},
	\\
\mathbf{a}-(p,0)&\textup{if}\enspace \mathbf{a}\in \tilde{A}',
	\\
\mathbf{a}+(0,p)&\textup{if}\enspace \mathbf{a}\in \tilde{A},
	\end{cases}
	\end{displaymath}
and we set $\tilde{A}^*=\{\mathbf{a}^*:\mathbf{a}\in \tilde{A}\}= \{\mathbf{a}^*:\mathbf{a}\in \tilde{A}'\}$. Under our assumption that $\omega $ has period $p$, the set $\tilde{S}\coloneqq \tilde{Q}\cup \tilde{A}^*$ has exactly one element in each row and column of the square $Q$. Hence $\tilde{S}$ determines a permutation of $\{0,\dots ,p-1\}$ which coincides with $\pi ^{(\omega )}_{(0,p)}$. The parity of this permutation is the parity of the number of inversions occurring in $\pi ^{(\omega )}_{(0,p)}$: if
	\begin{equation}
	\label{eq:inversions}
\mathscr{J}(p) = \{(\mathbf{a},\mathbf{b})\in \tilde{S}\times \tilde{S}:(\mathbf{a},\mathbf{b})\enspace \textup{is an inversion}\},
	\end{equation}
then
	\begin{equation}
	\label{eq:sign pi}
\textup{sgn}\,\pi ^{(\omega )}_{(0,p)}=(-1)^{|\mathscr{J}(p)|}.
	\end{equation}
When comparing this with
	\begin{equation}
	\label{eq:sign s}
\mathsf{s}(p,\omega )=(-1)^{|\mathscr{I}(p)|+pa(\omega )},
	\end{equation}
we have to consider several cases:
	\begin{enumerate}
	\item[(a)]
If $(\mathbf{a},\mathbf{b})\in \tilde{Q}$ or $(\mathbf{a},\mathbf{b})\in \tilde{A}'$, then $(\mathbf{a},\mathbf{b})\in \mathscr{I}(p)$ if and only if $(\mathbf{a}^*,\mathbf{b}^*)\in \mathscr{J}(p)$.
	\item[(b)]
If $\mathbf{a}=(a_1,a_2)\in \tilde{B}$ and $\mathbf{b}=(b_1,b_2)\in \tilde{A}$, then $(\mathbf{a},\mathbf{b})\in\mathscr{I}(p)$ if and only if $a_1<b_1$, in which case $(\mathbf{b}^*,\mathbf{a}^*)\notin \mathscr{J}(p)$. If $a_1>b_1$, then $(\mathbf{a},\mathbf{b})\notin \mathscr{I}(p)$, but $(\mathbf{b}^*,\mathbf{a}^*)\in \mathscr{J}(p)$. Since $|\tilde{B}\times \tilde{A}|=a(\omega )(K-a(\omega ))$, we conclude that $|\mathscr{I}(p) \cap (\tilde{B}\times \tilde{A})| + |\mathscr{J}(p) \cap (\tilde{A}^*\times \tilde{B})|= a(\omega )(K-a(\omega ))$.
	\item[(c)]
$(\tilde{A}'\times \tilde{C})\cap \mathscr{I}(p)= (\tilde{C}\times \tilde{A}')\cap \mathscr{I}(p)=\varnothing $, but $(\tilde{A}^*\times \tilde{C})\subset \mathscr{J}(p)$. Note that $|\tilde{A}'\times \tilde{C}|= |\tilde{C}\times \tilde{A}'| = |\tilde{A}^*\times \tilde{C}|=a(\omega )(p-2K+a(\omega ))$.
	\item[(d)]
If $\mathbf{a}=(a_1,a_2)\in \tilde{D}$ and $\mathbf{b}=(b_1,b_2)\in \tilde{A}'$, then $(\mathbf{a},\mathbf{b})\in \mathscr{I}(p)$ if and only if $a_2<b_2$, in which case $(\mathbf{b}^*,\mathbf{a}^*)\notin \mathscr{J}(p)$. If $a_2>b_2$, then $(\mathbf{a},\mathbf{b})\notin \mathscr{I}(p)$, but $(\mathbf{b}^*,\mathbf{a}^*)\in \mathscr{J}(p)$. Since $|\tilde{A}'\times  \tilde{D}|=a(\omega )(K-a(\omega ))$, we conclude that $|\mathscr{I}(p) \cap (\tilde{D}\times \tilde{A}')| + |\mathscr{J}(p) \cap (\tilde{A}^*\times \tilde{D})|=a(\omega )(K-a(\omega ))$.
	\end{enumerate}
Clearly,
	\begin{align*}
|\mathscr{I}(p)|&=|\mathscr{I}(p)\cap (\tilde{Q}\times \tilde{Q})| + |\mathscr{I}(p)\cap (\tilde{A}'\times \tilde{A}')| + |\mathscr{I}(p)\cap (\tilde{A}'\times \tilde{Q})|
	\\
& \qquad \qquad + |\mathscr{I}(p)\cap (\tilde{Q}\times \tilde{A})| + |\mathscr{I}(p) \cap (\tilde{Q}\times \tilde{A}')|
	\\
&= |\mathscr{I}(p)\cap (\tilde{Q}\times \tilde{Q})| + |\mathscr{I}(p)\cap (\tilde{A}'\times \tilde{A}')|
	\\
&\qquad \qquad + |\mathscr{I}(p)\cap (\tilde{B}\times \tilde{A})| + |\mathscr{I}(p)\cap (\tilde{D}\times \tilde{A}')|,
	\\
|\mathscr{J}(p)|&=|\mathscr{J}(p)\cap (\tilde{Q}\times \tilde{Q})| + |\mathscr{J}(p)\cap (\tilde{A}^*\times \tilde{A}^*)|
	\\
&\qquad \qquad + |\mathscr{J}(p)\cap (\tilde{A}^*\times \tilde{Q})| + |\mathscr{J}(p)\cap (\tilde{Q}\times \tilde{A}^*)|
	\\
&=|\mathscr{J}(p)\cap (\tilde{Q}\times \tilde{Q})| + |\mathscr{J}(p)\cap (\tilde{A}^*\times \tilde{A}^*)| + |\mathscr{J}(p)\cap (\tilde{A}^*\times \tilde{B})|
	\\
&\qquad \qquad + |\mathscr{J}(p)\cap (\tilde{A}^*\times \tilde{D})| + |\tilde{A}^*\times \tilde{C}|
	\end{align*}
By combining the cases (a) -- (d) listed above and remembering that $|\tilde{C}|=p-2K+a(\omega )$ we obtain that
	\begin{align*}
\mathscr{J}(p)&=|\mathscr{I}(p)\cap (\tilde{Q}\times \tilde{Q})| + |\mathscr{I}(p)\cap (\tilde{A}'\times \tilde{A}')| - |\mathscr{I}(p)\cap (\tilde{B}\times \tilde{A})|
	\\
&\qquad \qquad - |\mathscr{I}(p)\cap (\tilde{D}\times \tilde{A}')| + 2a(\omega )(K - a(\omega )) + a(\omega )(p-2K+a(\omega )).
	\end{align*}
Hence
	\begin{displaymath}
|\mathscr{J}(p)|=|\mathscr{I}(p)| + a(\omega )(p+a(\omega )) = |\mathscr{I}(p)| + a(\omega )p+a(\omega )\pmod2.
	\end{displaymath}
If we recall \eqref{eq:sign pi}--\eqref{eq:sign s} we obtain \eqref{eq:sign}.
	\end{proof}

	\begin{exas}
	\label{e:1}
(1) Let $K=2$ (cf. Example \ref{e:sft}). The map $c\colon \Omega _2\longrightarrow \mathbb{Z}$ in \eqref{eq:c1} is given by
	\begin{displaymath}
\smash[b]{c(\omega )=
	\begin{cases}
1&\textup{if}\enspace \omega _{0}=0\;\textup{and}\;\omega _{-1}=2,
	\\
0&\textup{otherwise},
	\end{cases}}
	\end{displaymath}
and
	\begin{displaymath}
\mathsf{c}(n,\omega )=|\{k:0\le k<n:\omega _k=0\enspace \textup{and}\enspace \omega _{k-1}=2\}|.
	\end{displaymath}

\smallskip (2) More generally, if $K\ge1$ and $\omega \in \Omega _{K,1}$, then
	\begin{displaymath}
c(\omega )=
	\begin{cases}
1&\textup{if}\enspace \omega _0=0,
	\\
0&\textup{otherwise},
	\end{cases}
	\end{displaymath}
and $\mathsf{c}(n,\omega )=|\{k:0\le k<n:\omega _k=0\}|$.

\smallskip (3) For $\omega \in \Omega _{K,l}$ with $1\le l \le K-1$, $c(\omega )$ can be calculated by using the isomorphism $\phi _{K,l}\colon \Omega _{K,l}\longrightarrow X_{K,l}$ in \eqref{eq:phiKl}: $c(\omega ) = |\{a\in \phi _{K,l}(\omega )_0: a>\omega _0\}|$, where $\omega _0$ is determined by \eqref{eq:iso1}.
	\end{exas}

As one would expect, the parity cocycle $\mathsf{s}\colon \mathbb{Z}\times \Omega _K\longrightarrow C_2$ in \eqref{eq:s} is \textit{nontrivial} in the sense that its group of essential values is equal to $C_2$. More precisely, the following is true.

	\begin{prop}
	\label{p:ergodic}
Let $K\ge2$, $1\le l \le K-1$, and let $\mu _{K,l}$ be the unique shift-invariant probability measure with maximal entropy on $\Omega _{K,l}$. Then the skew-product transformation $\tilde{\sigma }\colon \Omega _{K,l}\times C_2\longrightarrow \Omega _{K,l}\times C_2$, defined by
	\begin{displaymath}
\tilde{\sigma }(\omega ,j) = (\sigma \omega ,s(\omega )j)
	\end{displaymath}
for every $(\omega ,j)\in \Omega _{K,l}\times C_2$, is ergodic with respect to the product measure $\tilde{\mu }_{K,l}=\mu _{K,l}\times \lambda _{C_2}$, where $\lambda _{C_2}(\{1\})=\lambda _{C_2}(\{-1\})=\frac12$.
	\end{prop}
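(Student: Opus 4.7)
The plan is to reduce the ergodicity of $\tilde{\sigma}$ to the non-existence of a measurable transfer function for the cocycle $s$, and then to rule this out by exhibiting a single periodic orbit on which $\mathsf{s}$ is non-trivial.

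By Proposition \ref{p:mixing}, $(\Omega_{K,l},\sigma)$ is a mixing \textit{SFT}, $\mu_{K,l}$ is its Parry measure, and the base system $(\Omega_{K,l},\sigma,\mu_{K,l})$ is ergodic. Multiplication by $-1$ in the fibre commutes with $\tilde{\sigma}$ and preserves $\tilde{\mu}_{K,l}$, so it permutes the ergodic components of $\tilde{\sigma}$: either $\tilde{\sigma}$ is already ergodic, or its ergodic decomposition consists of exactly two sets of $\tilde{\mu}_{K,l}$-measure $\tfrac12$ exchanged by the $C_2$-action. In the latter case, picking a $\tilde{\sigma}$-invariant set $A$ of measure $\tfrac12$ from one component and setting $f(\omega)=1_A(\omega,1)-1_A(\omega,-1)\in C_2$ produces a measurable $f\colon\Omega_{K,l}\to C_2$ with $f(\sigma\omega)=s(\omega)f(\omega)$ for $\mu_{K,l}$-a.e.\ $\omega$.

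The cocycle $s$ is locally constant, since by \eqref{eq:a} and \eqref{eq:c1} both $a(\omega)$ and $c(\omega)$ depend only on the coordinates $\omega_{-K+1},\dots,\omega_0$, and $\mu_{K,l}$ is a Gibbs measure for a constant (in particular H\"older) potential on the mixing \textit{SFT} $\Omega_{K,l}$. Hence Livshitz's theorem for H\"older cocycles on \textit{SFTs} applies: any measurable $f$ satisfying the above coboundary equation coincides $\mu_{K,l}$-a.e.\ with a continuous function for which that equation holds at \emph{every} point. For a periodic $\omega\in \Omega_{K,l}$ of period $p$, telescoping then forces
\[
\mathsf{s}(p,\omega)=\frac{f(\sigma^p\omega)}{f(\omega)}=1.
\]
Therefore it suffices to exhibit one periodic point $\omega^\ast\in\Omega_{K,l}$ with $\mathsf{s}(p,\omega^\ast)=-1$ for some $p$.

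Define the period-two candidate $\omega^\ast$ by $\omega^\ast_n=l-1$ for $n$ even and $\omega^\ast_n=l+1$ for $n$ odd; since $1\le l\le K-1$ we have $\omega^\ast_n\in\mathsf{A}_K$. A direct check shows that $n\mapsto n+\omega^\ast_n$ is a bijection of $\mathbb{Z}$ (for $l$ odd it preserves parity, shifting evens and odds by $l-1$ and $l+1$ respectively; for $l$ even it exchanges parities), and counting the non-negative values of $\tilde{\omega}^\ast_{-k}$ for $k=1,\dots,K$ in \eqref{eq:a} yields $a(\omega^\ast)=l$, so $\omega^\ast\in\Omega_{K,l}$. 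Then $\pi^{(\omega^\ast)}_{(0,2)}=\bigl((l-1)\bmod 2,\;l\bmod 2\bigr)$ is the identity permutation when $l$ is odd and a transposition when $l$ is even, so its sign equals $(-1)^{l+1}$; by \eqref{eq:sign},
\[
\mathsf{s}(2,\omega^\ast)=(-1)^{a(\omega^\ast)}\,\textup{sgn}\,\pi^{(\omega^\ast)}_{(0,2)}=(-1)^l(-1)^{l+1}=-1.
\]
The main obstacle is the appeal to Livshitz's theorem to promote the measurable coboundary relation to a continuous one; the ergodicity dichotomy and the parity verification on $\omega^\ast$ are routine.
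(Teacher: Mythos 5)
Your argument is correct in substance but follows a genuinely different route from the paper. The paper does not pass through the coboundary/Liv\v{s}ic reduction at all: it verifies the essential-values criterion of \cite[Corollary 5.4]{CocycleNotes} directly, by constructing an involution $V$ of $\Omega_{K,l}$ that swaps two cylinder sets $E$ and $F$ (a local ``resplicing'' of a block of $l$'s that flips exactly one value of $s$ along the orbit) and then using the fact that the Parry measure $\mu_{K,l}$ is mixing of all orders to place this flip inside $B\cap\sigma^{-2m}B$ for an arbitrary positive-measure set $B$; this produces pairs $\omega,\omega'$ in $B\cap\sigma^{-2m}B$ with $\mathsf{s}(2m,\omega)=-\mathsf{s}(2m,\omega')$, so both elements of $C_2$ are essential values. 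Your route --- (i) non-ergodicity forces a measurable $f$ with $f\circ\sigma=s\cdot f$, (ii) measurable Liv\v{s}ic rigidity for H\"older (here locally constant) cocycles over the Gibbs measure of a mixing \textit{SFT} upgrades $f$ to a continuous solution, (iii) a periodic point with $\mathsf{s}(p,\cdot)=-1$ gives a contradiction --- is conceptually cleaner and isolates the obstruction in a single explicit periodic orbit, but it leans on a substantially heavier black box (the measurable Liv\v{s}ic theorem for group-valued cocycles, e.g.\ Parry--Pollicott/Pollicott--Walkden), whereas the paper's argument is self-contained modulo the elementary essential-values criterion. Two small points to tidy up: first, the ``exactly two components exchanged by $C_2$'' dichotomy is more cleanly obtained by Fourier-decomposing a nonconstant invariant function $F(\omega,j)=F_0(\omega)+F_1(\omega)j$, which immediately yields the transfer function $f=F_1/|F_1|$; second, although \eqref{eq:sign} is stated for arbitrary period $p$, the paper's proof of Theorem \ref{t:parity} is carried out only for $p>K$, so for your period-$2$ point it is safer to compute $\mathsf{s}(2,\omega^\ast)=(-1)^{c(\omega^\ast)+c(\sigma\omega^\ast)+2l}$ directly from \eqref{eq:c1}--\eqref{eq:s}; one finds $c(\omega^\ast)=1$ (only $k=-1$ contributes) and $c(\sigma\omega^\ast)=0$, giving $\mathsf{s}(2,\omega^\ast)=-1$ without any appeal to the parity theorem.
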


	\begin{proof}
According to \cite[Corollary 5.4]{CocycleNotes}, we have to show the following: for every Borel set $B\subset \Omega _{K,l}$ with $\mu _{K,l}(B)>0$ and every $j\in C_2$,
	\begin{equation}
	\label{eq:essential}
B\cap \sigma ^{-m}B \cap \{\omega \in \Omega _{K,l}:\mathsf{s}(m,\omega )=j\}\ne \varnothing
	\end{equation}
for some $m>0$.

Consider the cylinder sets
	\begin{gather*}
E=\{\omega \in \Omega _{K,l}: \omega _i=l\enspace \textup{for}\enspace i=0,\dots ,K+1\},
	\\
F=\{\omega \in \Omega _{K,l}: \omega _i=l\; \textup{for}\; i=0,\dots ,K-2,K+1, \; \omega _{K-1}=l+1, \; \omega _K=l-1\}.
	\end{gather*}
Since $\Omega _{K,l}$ is irreducible and aperiodic, $\beta \coloneqq \mu _{K,l}(D)>0$, where $D=E\cup F$. We define a homeomorphism $V\colon \Omega _{K,l}\longrightarrow \Omega _{K,l}$ by setting $V\omega =\omega $ if $\omega \notin D$, and
	\begin{displaymath}
(V\omega )_i=
	\begin{cases}
l&\textup{if}\enspace \omega \in E\enspace \textup{and}\enspace i=0,\dots ,K-2,K+1,
	\\
l+1&\textup{if}\enspace \omega \in E\enspace \textup{and}\enspace i=K-1,
	\\
l-1&\textup{if}\enspace \omega \in E\enspace \textup{and}\enspace i=K,
	\\
l&\textup{if}\enspace \omega \in F\enspace \textup{and}\enspace i=0,\dots ,K+1.
	\end{cases}
	\end{displaymath}
Then $V^2=\textup{Id}_{\Omega _{K,l}}$ and $VE=F$.

We fix $B\subset \Omega _{K,l}$ with $\mu _{K,l}(B)>0$. By approximating $B$ with closed and open subsets of $\Omega _{K,l}$ we see that $\lim_{m\to\infty }\mu _{K,l}(B\cap \sigma ^{-m}V\sigma ^m B) = \mu _{K,l}(B)$ and $\lim_{m\to\infty }\mu _{K,l}(\sigma ^{-2m}B \cap \sigma ^{-m}V\sigma ^{-m} B)= \mu _{K,l}(B)$. Furthermore, since $\mu _{K,l}$ is mixing of every order, $\lim_{m\to\infty }\mu _{K,l}(B\linebreak[0]\cap \sigma ^{-m}D \cap \sigma ^{-2m}B)= \beta \mu _{K,l}(B)^2$. We conclude that
	\begin{align*}
\beta \mu _{K,l}(B)^2&= \lim_{m\to\infty }\mu _{K,l}(B\cap \sigma ^{-m} D \cap \sigma ^{-2m}B)
	\\
&= \lim_{m\to\infty }\mu _{K,l}(B\cap \sigma ^{-m}V\sigma ^m B \cap \sigma ^{-m} D \cap \sigma ^{-2m}B \cap \sigma ^{-m}V\sigma ^{-m}B).
	\end{align*}

Let $m>K$ be sufficiently large so that the set $E=B\cap \sigma ^{-m}V\sigma ^m B \cap \sigma ^{-m} D \cap \sigma ^{-2m}B \linebreak[0]\cap \sigma ^{-m}V\sigma ^{-m}B$ is nonempty. For every $\omega \in E$ the following conditions are satisfied:
	\begin{align*}
\omega &\in B, & \sigma ^{2m}\omega &\in B, & \sigma ^m\omega &\in D,
	\\
\omega '&\in B, & \sigma ^{2m} \omega '&\in B, & \sigma ^m\omega '&\in D,
	\end{align*}
where $\omega '=\sigma ^{-m}V\sigma ^m\omega $. A glance at the definition of the cocycle $\mathsf{s}$ in \eqref{eq:s} shows that, for every $\omega \in E$ and $k\in \mathbb{Z}$,
	\begin{displaymath}
s(\sigma ^k\omega ')=
	\begin{cases}
\hphantom{-}s(\sigma ^k\omega )&\textup{if}\enspace k\ne m+K,
	\\
-s(\sigma ^k\omega )&\textup{if}\enspace k=m+K.
	\end{cases}
	\end{displaymath}
In particular, $\{\omega ,\omega '\}\subset B\cap \sigma ^{-2m}B$ and $\mathsf{s}(2m, \omega )=-\mathsf{s}(2m,\omega ')$. This proves \eqref{eq:essential}.
	\end{proof}

	\begin{exam}[Generalized circulants]
	\label{e:circulant}
The study of the space $\Omega _K$, its irreducible components $\Omega _{K,l}$, and the permutations corresponding to periodic elements of $\Omega _K$ was partly motivated by expressions occurring in the calculation of entropy of (expansive) algebraic actions of the discrete Heisenberg group (cf. \cite[Section 8]{Lind+Schmidt}).

Let $\phi _0,\dots ,\phi _K$ be continuous complex-valued functions on $\mathbb{T}$. We are interested in bi-infinite `generalized circulants' of the form
	\begin{equation}
	\label{eq:circulant}
A_{t,\alpha }=\scalebox{.90}{$\left(
	\begin{smallmatrix}
&\vdots\enspace  & \vdots & \vdots & \vdots & \vdots & \vdots & \vdots &\vdots &\enspace \vdots &
	\\
\cdots\enspace  & 0\enspace  & \phi _{0}(t-\alpha ) & \phi _{1}(t-\alpha ) & \cdots & \phi _{K-1}(t-\alpha ) & \phi _{K}(t-\alpha ) & 0&0&\enspace 0&\enspace \cdots
	\\
\cdots\enspace  & 0\enspace  & 0 &\phi _{0}(t) & \phi _{1}(t) & \cdots & \phi _{K-1}(t) & \phi _{K}(t) & 0&\enspace 0&\enspace \cdots
	\\
\cdots\enspace  & 0\enspace  & 0 &0& \phi _{0}(t+\alpha ) & \phi _{1}(t+\alpha ) & \cdots & \phi _{K-1}(t+\alpha )  &\phi _K(t + \alpha )& \enspace 0&\enspace \cdots\vspace{-2mm}
	\\
 & \vdots\enspace & \vdots & \vdots & \vdots & \vdots & \vdots & \vdots &\vdots &\enspace \vdots &
	\end{smallmatrix}
\right) $}
	\end{equation}
for $(t,\alpha )\in \mathbb{T}^2$. The matrix $A_{t,\alpha }$ acts by left multiplication on the space $\ell ^\infty (\mathbb{Z},\mathbb{C})^\top$ of bounded column vectors with complex entries. If $\alpha $ is rational with $\alpha =p/q$ in lowest terms, say, then $A_{t,p/q}$ acts by left multiplication on the set of elements in $\ell ^\infty (\mathbb{Z},\mathbb{C})^\top$ with period $q$, which we identify with $\mathbb{C}^q \simeq \ell ^\infty (\mathbb{Z}/q\mathbb{Z},\mathbb{C})$. The determinant of this linear transformation of $\mathbb{C}^q$ can be expressed in terms of \textit{SFT} $\Omega _K$, using the parity cocycle \eqref{eq:s} -- \eqref{eq:cocycle}: if $P_q(\Omega _K)$ and $P_q(\Omega _{K,l})$ are the sets of points of period $q$ in $\Omega _K$ and $\Omega _{K,l}$, then
	\begin{equation}
	\label{eq:determinant1}
	\begin{aligned}
\det A_{t,p/q} & = \sum\nolimits_{\omega \in P_q(\Omega _K)} \;(-1)^{a(\omega )} \,\mathsf{s}(q,\omega )\prod\nolimits_{j=0}^{q-1} \phi _{\omega _j}(t+jp/q)
	\\
& = \sum\nolimits_{l=0}^K \; (-1)^l \cdot \sum\nolimits_{\omega \in P_q(\Omega _{K,l})}\;\prod\nolimits_{j=0}^{q-1} s(\sigma ^j\omega )\phi _{\omega _j}(t+jp/q).
	\end{aligned}
	\end{equation}
As explained in \cite[Section 8]{Lind+Schmidt} one should normalize $\det A_{t,p/q}$ by setting
	\begin{equation}
	\label{eq:determinant2}
D(A_{t,p/q}) = |\det (A_{t,p/q})|^{1/q}.
	\end{equation}
In the context of algebraic actions of the discrete Heisenberg group the functions $\phi _i,\,i=0,\dots ,K$, are trigonometric polynomials arising from the element $f$ in the integer group ring $\mathbb{Z}\Gamma $ which defines the action, and the quantity $\int_\mathbb{T} \log D(A_{t,p/q}) dt$ measures the contribution to the entropy of this action associated with a \textit{rational} rotation number $\alpha =p/q$ representing the central generator $z=$\scalebox{.7}{$\Bigl( \begin{smallmatrix}1&0&1\\0&1&0\\0&0&1 \end{smallmatrix} \Bigr)$} of $\Gamma $. If this action is expansive, the asymptotic behaviour (as $q\to\infty $) of the expressions $D(A_{t,p/q})$ in \eqref{eq:determinant2} determines the entropy of the algebraic action (cf. \cite{DS} and \cite[Section 8]{Lind+Schmidt}). For nonexpansive actions of this form one might still expect that $\limsup_{q\to\infty }D(A_{t,p/q})=h(\alpha _f)$ for every $t\in \mathbb{T}$, but this statement is currently only conjectural.
	\end{exam}

\section{Permutations of $\mathbb{Z}^d$ with restricted movement}\label{s:Z2}

We start by still assuming that $d=1$, but by allowing $\mathsf{A}\subset \mathbb{Z}$ to be an arbitrary nonempty finite set. Define $\Pi _\mathsf{A}$ and $\Omega _\mathsf{A}$ as at the beginning of Section \ref{s:Intro}. Then $\Omega _\mathsf{A}$ is a \textit{SFT} by Lemma \ref{l:subshift}.

	\begin{prop}
	\label{p:OmegaA}
\textup{(1)} If $|\mathsf{A}|\ge 2$, then $\Omega _\mathsf{A}$ is not irreducible and hence not mixing.

\textup{(2)} The \textit{SFT} $\Omega _\mathsf{A}$ is finite if $|\mathsf{A}|\le 2$, and has positive topological entropy if $|\mathsf{A}|\ge3$.
	\end{prop}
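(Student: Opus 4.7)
My plan for (1) is to invoke the continuous shift-invariant integer-valued function $a$ of Lemma~\ref{l:components}. Applying Proposition~\ref{p:PiA}(4), translate so that $\mathsf{A}\subseteq\mathsf{A}_K$ with $K=\max\mathsf{A}-\min\mathsf{A}$; then $\Omega_\mathsf{A}\subseteq\Omega_K$ and $a$ restricts to $\Omega_\mathsf{A}$. For every $b\in\mathsf{A}$ the constant sequence $\omega^{(b)}\equiv b$ (corresponding to the translation $n\mapsto n+b$) is a fixed point of $\sigma$ in $\Omega_\mathsf{A}$, and \eqref{eq:a} immediately gives $a(\omega^{(b)})=b$. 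When $|\mathsf{A}|\ge 2$, $a$ takes at least two distinct values on $\Omega_\mathsf{A}$, so its level sets form a non-trivial partition of $\Omega_\mathsf{A}$ into pairwise disjoint closed shift-invariant subsets. This rules out topological transitivity, and hence both irreducibility and topological mixing.

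For (2) the cases $|\mathsf{A}|\le 2$ are elementary. If $|\mathsf{A}|=1$, $\Omega_\mathsf{A}$ is a singleton. If $|\mathsf{A}|=2$, translate so $\mathsf{A}=\{0,k\}$ and encode $\omega\in\Omega_\mathsf{A}$ by $S=\{n:\omega_n=k\}$; a direct check on $\pi^{(\omega)}$ shows that it is a bijection of $\mathbb{Z}$ iff $S+k=S$, i.e.\ $S$ is a union of cosets of $k\mathbb{Z}$, giving $|\Omega_\mathsf{A}|=2^k<\infty$.

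For $|\mathsf{A}|\ge 3$ translate so that $\{0,c,K\}\subseteq\mathsf{A}$ with $0<c<K$. Since $h$ is monotone under inclusion of subshifts it suffices to prove $h(\Omega_{\{0,c,K\}})>0$, and I would work inside the component $\Omega_{\{0,c,K\}}\cap\Omega_{K,c}$. Transporting by $\phi_{K,c}$ (Proposition~\ref{p:XKl}), this component corresponds to the sub-SFT $Y\subseteq X_{K,c}$ obtained from (M1)--(M2) by keeping only the transitions realising $\omega\in\{0,c,K\}$: the forced (M1) move when $0\notin\mathsf{a}$, and the (M2) moves with $j=c-1$ (available iff $c\notin\mathsf{a}$) or $j=K-1$ when $0\in\mathsf{a}$. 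At $\mathsf{e}=\{0,\dots,c-1\}$ both $\omega=c$ (a self-loop) and $\omega=K$ (sending $\mathsf{e}$ to $\mathsf{f}_0=\{0,\dots,c-2,K-1\}$) are allowed.

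The key step — and main obstacle — is to produce a path in $Y$ from $\mathsf{f}_0$ back to $\mathsf{e}$. Given such a return of length $n\ge 1$, the cycles of lengths $1$ and $n+1\ge 2$ at $\mathsf{e}$ and the Fibonacci-type count $\#\{\mathsf{e}\to\mathsf{e}\text{ in }Y\text{ of length }N\}\ge\sum_{b\ge 0}\binom{N-bn}{b}$ grow exponentially in $N$, forcing $h(Y)>0$ and hence $h(\Omega_\mathsf{A})>0$. For the return path I would identify states with binary strings $\chi\in\{0,1\}^K$ of weight $c$ (by $\chi(j)=1\iff j\in\mathsf{a}$), so that every allowed transition is a left shift followed by a bit injection at position $c-1$ or $K-1$; the greedy rule (prefer (M1), then $\omega=c$, then the forced $\omega=K$) acts monotonically on the potential $\phi(\chi):=\#\{0\le j<c:\chi(j)=1\}$, strictly increasing it by one whenever (M1) is applied at a state with $\chi(c)=1$. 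The hard part is to exclude stabilisation at some $\phi<c$, which I plan to address by tracking the topmost one (initially at $K-1$): repeated left shifts drag it down, and when it arrives at position $c$ simultaneously with $\chi(0)=0$ a $\phi$-increasing (M1) step fires. Iterating, $\phi$ reaches its maximum value $c$, forcing the state to coincide with $\mathsf{e}$.
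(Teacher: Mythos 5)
Parts (1) and the case $|\mathsf{A}|\le 2$ of (2) are correct and essentially identical to the paper's own arguments: the paper also separates $\Omega_\mathsf{A}$ by the fixed points $\underline{a}$, $a\in\mathsf{A}$, lying in distinct components $\Omega_{K,a}$, and also counts $2^a$ elements when $\mathsf{A}=\{0,a\}$.

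For $|\mathsf{A}|\ge 3$ your strategy (reduce to $\mathsf{A}=\{0,c,K\}$, pass to the sub-\textit{SFT} $Y\subset X_{K,c}$ on the alphabet of moves realising $\omega\in\{0,c,K\}$, and extract a diamond at $\mathsf{e}$) is workable, but the proof as written has a genuine gap exactly where you flag it: the existence of a path in $Y$ from $\mathsf{f}_0=\{0,\dots,c-2,K-1\}$ back to $\mathsf{e}$ is not established. The greedy/potential sketch does not close it. Your potential $\phi$ is merely non-decreasing along the greedy orbit, and since the greedy dynamics is a deterministic map on a finite state space it could a priori fall into a cycle on which $\phi$ is constant and $<c$; the proposed remedy of ``tracking the topmost one'' does not rule this out, because each forced $\omega=K$ step (which occurs whenever $0\in\mathsf{a}$ and $c\in\mathsf{a}$) re-injects an element at position $K-1$, so the maximal element of the state need not descend to $c$, and even when some element reaches position $c$ you have no control over whether $0\notin\mathsf{a}$ at that moment, which is what a $\phi$-increasing step requires. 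So the termination claim is asserted, not proved.

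The gap is easy to close, and doing so lands you on the paper's construction. Starting from $\mathsf{e}=\{0,\dots,c-1\}$, emit $K$ exactly $c$ times (always inserting at desk $K-1$); this yields the states $\{0,\dots,c-1-j\}\cup\{K-j,\dots,K-1\}$ for $j=1,\dots,c$, ending at $\{K-c,\dots,K-1\}$, which contains no $0$, so $K-c$ forced $\omega=0$ steps return you to $\mathsf{e}$. Thus $K^c0^{K-c}$ is an explicit return word of length $K$ through $\mathsf{f}_0$, and together with the self-loop $\omega=c$ at $\mathsf{e}$ your diamond argument gives $h(\Omega_\mathsf{A})\ge h(Y)>0$. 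In permutation language this word is precisely the paper's block swap: the paper defines finite permutations $\tau_n$ exchanging the blocks $[n,n+c-1]$ and $[n+c,n+K-1]$, observes that $\varsigma^{c}\circ\tau_n$ has displacements in $\{0,c,K\}$, and that the $\tau_{Kn}$ commute, so that independent choices of whether to apply each $\tau_{Kn}$ embed a full two-shift and give $h(\Omega_\mathsf{A})\ge\frac1K\log 2$ directly, without passing through $X_{K,c}$ at all. Your route, once repaired, yields the same conclusion with a little more machinery; the paper's is the more elementary one.
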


	\begin{proof}
In view of Proposition \ref{p:PiA} we assume that $0\in \mathsf{A}\subset \mathbb{Z}_+=\{0,1,2,\dots \}$ and choose $K\ge 1$ so that $\mathsf{A}\subset \mathsf{A}_K$ (cf. \eqref{eq:PiK}). For the proof of (1) we note that, for every $a\in \textsf{A}$, the fixed point $\underline{a} = (\dots , a,a,a,\dots )$ lies in $\Omega _\textsf{A} \cap \Omega _{K,a}$. For $a,b\in \mathsf{A}$ with $a<b$, the fixed points $\underline{a}$ and $\underline{b}$ lie in the distinct irreducible components $\Omega _{K,a}$ and $\Omega _{K,b}$ of $\Omega _K$. The sets $\Omega _\mathsf{A} \cap \Omega _{K,a}$ and $\Omega _\mathsf{A} \cap \Omega _{K,b}$ are disjoint, nonempty, shift-invariant, open subsets of $\Omega _\mathsf{A}$, so that $\Omega _\mathsf{A}$ cannot be topologically mixing. This proves (1).

We turn to (2). If $\mathsf{A}=\{0\}$ then $\Omega _\mathsf{A}=\{\underline{0}\}$. If $\mathsf{A}=\{0,a\}$ for some $a\ge1$, then $|\Omega _\mathsf{A}|=|\Pi _\mathsf{A}|=2^a$: for every $u=(u_0,\dots ,u_{a-1})\in \{0,1\}^a$ there exists a unique $\pi ^{(u)}\in \Pi _\mathsf{A}$ such that $\pi ^{(u)}(i+ma)=i+(m+u_i)a$ for every $i=0,\dots ,a-1$ and $m\in \mathbb{Z}$. Furthermore, every $\pi \in \Pi _\mathsf{A}$ is of this form for some $u\in \{0,1\}^a$.

Finally we assume that $\mathsf{A}\supset \{0,a,b\}$ with $0<a<b$. For every $n\in \mathbb{Z}$ we consider the finite permutation of $\mathbb{Z}$ defined by
	\begin{displaymath}
	\tau _n= \Bigl(\begin{smallmatrix}
n&\;n+1&\;\cdots&\;n+a-1&\;n+a&\;n+a+1&\;\cdots &\;n+b-1
	\\
n+b-a&\;n+b-a+1&\;\cdots&\;n+b-1&\;n&\;n+1&\;\cdots &\; n+b-a-1
	\end{smallmatrix}\Bigr),
	\end{displaymath}
Clearly, the permutations $\tau _m$ and $\tau _n$ commute if $|m-n|\ge b$. This allows us to define, for every $v=(v_n)\in\{0,1\}^\mathbb{Z}$, a permutation $\tau ^{(v)}\in S^\infty (\mathbb{Z})$ by setting
	\begin{displaymath}
\tau ^{(v)} = \prod\nolimits_{n\in \mathbb{Z}} \tau _{bn}^{v_n},
	\end{displaymath}
where $\tau _m^0$ is the identity permutation for every $m\in \mathbb{Z}$. Note that $\varsigma ^{a}\circ \tau ^{(v)} \in \Pi _\mathsf{A}$ for every $v\in\{0,1\}^\mathbb{Z}$ (cf. Proposition \ref{p:PiA}).

In order to understand what is going on here it may help to consider an element $v\in\{0,1\}^\mathbb{Z}$ of the form $v= (\dots ,1,\dot 0,1, \dots)$, where the dot marks the zero-th coordinate of $v$. Then $\tau ^{(v)}$ is the permutation
	\begin{displaymath}
\scalebox{.9}{$\Bigl(\begin{smallmatrix}
\cdots &\, \vert &\, -b&\,\cdots&\,-b+a-1&\,-b+a&\,\cdots &\,-1 &\, \vert &\, 0&\,\cdots&\,a-1&\,a&\,\cdots &\,b-1 &\,\vert &\, b&\,\cdots&\,b+a-1&\,b+a&\,\cdots &\,2b-1 &\, \vert &\, \cdots
	\\
\cdots &\, \vert &\, -a&\,\cdots&\,-1&\,-b&\,\cdots &\,-a-1 &\, \vert &\, 0&\,\cdots&\,a-1&\,a&\,\cdots &\,b-1 &\,\vert &\, 2b-a&\,\cdots&\,2b-1&\,b&\,\cdots &\,2b-a-1 &\, \vert &\, \cdots
	\end{smallmatrix} \Bigr),$}
	\end{displaymath}
where we have separated the individual permutations $\dots ,\tau _{-b}, \tau _0, \tau _b^0,\dots $ by vertical bars. The permutation $\varsigma ^{a}\circ \tau ^{(v)}$ is of the form
	\begin{displaymath}
\scalebox{.9}{$\Bigl(\begin{smallmatrix}
\cdots &\, \vert &\, -b&\,\cdots&\,-b+a-1&\,-b+a&\,\cdots &\,-1 &\, \vert &\, 0&\,\cdots&\,a-1&\,a&\,\cdots &\,b-1 &\,\vert &\, b&\,\cdots&\,b+a-1&\,b+a&\,\cdots &\,2b-1 &\, \vert &\, \cdots
	\\
\cdots &\, \vert &\, 0&\,\cdots&\,a-1&\,-b+a &\,\cdots &\,-1 &\, \vert &\, a&\,\cdots&\,2a-1&\,2a&\,\cdots &\,a+b-1 &\,\vert &\, 2b&\,\cdots&\,a+2b-1&\,a+b&\,\cdots &\,2b-1 &\, \vert &\, \cdots
	\end{smallmatrix} \Bigr)$}
	\end{displaymath}
and obviously lies in $\Pi _\mathsf{A}$. By doing this for every $v\in \{0,1\}^\mathbb{Z}$ we have --- in effect --- embedded a full two-shift in the coordinates $b\mathbb{Z}$ of $\Omega _\mathsf{A}$. This implies that $\Omega _\mathsf{A}$ has entropy $\ge \frac 1b \log2$.
	\end{proof}

Next we take a look at the case where $d\ge2$ and consider dynamical properties of the $\mathbb{Z}^d$-\textit{SFT} $\Omega _\mathsf{A}\subset \mathsf{A}^{\mathbb{Z}^d}$, such as entropy and topological mixing.

	\begin{theo}
	\label{t:posent}
If $\mathsf{A}$ is a finite subset of $\mathbb{Z}^2$, then $\Omega _\mathsf{A}$ has positive entropy if and only if $|\mathsf{A}|\ge 3$.
	\end{theo}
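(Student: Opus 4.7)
For the necessity direction I would show $|\mathsf{A}|\le 2\Rightarrow h(\Omega_\mathsf{A})=0$. The case $|\mathsf{A}|=1$ is trivial. When $|\mathsf{A}|=2$, after a translation (Proposition~\ref{p:PiA}\,(4)) one may take $\mathsf{A}=\{0,\mathbf{a}\}$ with $\mathbf{a}=d_0\mathbf{a}_0$ and $\mathbf{a}_0\in\mathbb{Z}^2$ primitive. Every $\pi\in\Pi_\mathsf{A}$ preserves each coset $L=\mathbf{n}_0+\mathbb{Z}\mathbf{a}_0$, and in the identification $L\simeq\mathbb{Z}$ its restriction $\pi|_L$ is a 1D permutation with two-element alphabet $\{0,d_0\}$. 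By the 1D analysis of Proposition~\ref{p:OmegaA}\,(2) there are only $2^{d_0}$ such restrictions, so a box of side $N$ (meeting $O(N)$ of these cosets) admits at most $(2^{d_0})^{O(N)}$ distinct patterns, giving $h(\Omega_\mathsf{A})=0$.

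For $|\mathsf{A}|\ge 3$, translate $0\in\mathsf{A}$ and pick distinct $\mathbf{a},\mathbf{b}\in\mathsf{A}\smallsetminus\{0\}$; since $\Omega_{\{0,\mathbf{a},\mathbf{b}\}}\subset\Omega_\mathsf{A}$ it suffices to treat $\mathsf{A}'=\{0,\mathbf{a},\mathbf{b}\}$. \emph{Case A} (the three points are collinear): $\mathsf{A}'\subset\mathbb{Z}\mathbf{c}$ for a primitive $\mathbf{c}\in\mathbb{Z}^2$; every $\pi\in\Pi_{\mathsf{A}'}$ preserves each line $\mathbf{n}_0+\mathbb{Z}\mathbf{c}$, and the restriction is a 1D permutation with three-letter alphabet $\mathsf{A}'$. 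Proposition~\ref{p:OmegaA}\,(2) supplies positive 1D entropy $h_1>0$ per line, so the independent freedom on the $O(N)$ lines of length $O(N)$ meeting a box of side $N$ yields $h(\Omega_{\mathsf{A}'})\ge h_1>0$. \emph{Case B} ($\mathbf{a},\mathbf{b}$ linearly independent in $\mathbb{R}^2$): the sublattice $H=\mathbb{Z}\mathbf{a}+\mathbb{Z}\mathbf{b}$ has some finite index $k$ in $\mathbb{Z}^2$, every $\pi\in\Pi_{\mathsf{A}'}$ preserves each of the $k$ cosets of $H$, and using the basis $(\mathbf{a},\mathbf{b})$ to identify a coset with $H\simeq\mathbb{Z}^2$ the restricted system is conjugate to the canonical 2D model
\begin{displaymath}
\mathsf{A}_\ast=\{(0,0),(1,0),(0,1)\}\subset\mathbb{Z}^2.
\end{displaymath}
The $k$ cosets carry independent freedom, and because a box of side $N$ contains $\approx N^2/k$ points of each coset, multiplying the per-coset pattern counts gives $h(\Omega_{\mathsf{A}'})\ge h(\Omega_{\mathsf{A}_\ast})$. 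The theorem therefore reduces to establishing $h(\Omega_{\mathsf{A}_\ast})>0$.

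\textbf{The main obstacle} is this last step. Note that $\mathsf{A}_\ast$ contains no two elements summing to zero, so $\Pi_{\mathsf{A}_\ast}$ has no nontrivial finite cycle and no finite-support modification is possible: any two distinct permutations must differ on an infinite set, and positive entropy cannot arise from local swaps. Instead, $\Omega_{\mathsf{A}_\ast}$ is naturally identified with the SFT of perfect matchings of the hexagonal bipartite lattice (equivalently, the SFT of lozenge tilings of $\mathbb{R}^2$). I would establish positive entropy by counting $(N,N)$-periodic points: a transfer matrix along anti-diagonals, or equivalently Kasteleyn's Pfaffian formula on the toroidal honeycomb graph, yields $|\mathrm{Per}_N(\Omega_{\mathsf{A}_\ast})|=e^{cN^2+o(N^2)}$ with an explicit $c>0$, so the variational principle applied to the atomic measures on these periodic points gives $h(\Omega_{\mathsf{A}_\ast})\ge c>0$. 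This calculation is exactly what is carried out in detail in Section~\ref{s:example} and, combined with Cases A and B above, completes the proof.
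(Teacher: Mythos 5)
Your proposal is correct in substance, and its necessity direction and collinear case coincide with the paper's argument (both reduce to the one-dimensional Proposition \ref{p:OmegaA} along the cosets of a primitive line, with the same $O(N)$-lines bookkeeping). Where you genuinely diverge is the non-collinear case. The paper does \emph{not} pass through the dimer model: for linearly independent $\mathbf{a},\mathbf{b}\in\mathsf{A}$ it exhibits an explicit family of permutations $\pi _x\in \Pi _\mathsf{A}$ indexed by $x\in \{0,1\}^\Delta $ for a suitable finite-index sublattice $\Delta \subset \mathbb{Z}\mathbf{a}+\mathbb{Z}\mathbf{b}$, where at each $\mathbf{n}\in \Delta $ a bi-infinite orbit is routed to $\mathbf{n}+\mathbf{a}+\mathbf{b}$ either through $\mathbf{n}+\mathbf{a}$ or through $\mathbf{n}+\mathbf{b}$; this embeds a full two-shift and gives the elementary bound $h(\Omega _\mathsf{A})\ge \log 2/|\mathbb{Z}^2/\Delta |$. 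Your observation that $\mathsf{A}_\ast $ admits no finite cycles, so that positive entropy cannot come from local swaps, is exactly the point: it is why the paper's construction uses infinite orbits with a binary branching choice. Your alternative --- reduce to $\Omega _{\mathsf{A}_\ast }$ by the coset/affine conjugacy (which is correct, and is the affine invariance noted in the paper's remarks) and identify $\Omega _{\mathsf{A}_\ast }$ with perfect matchings of the honeycomb lattice --- is valid and structurally illuminating, but it makes the one nontrivial positivity claim rest on an imported classical computation (Kasteleyn, or a transfer matrix), where the paper's route is self-contained and elementary.

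One defect you should repair: you cannot defer the positivity of $h(\Omega _{\mathsf{A}_\ast })$ to Section \ref{s:example}. That section \emph{invokes} Theorem \ref{t:posent} to assert positivity, and its quantitative estimate ($\approx \log 1.38$) comes from Stembridge's nonintersecting-path formula rather than a toroidal Pfaffian, so as written your argument is circular at exactly the step that carries all the weight. To close it you must either carry out the transfer-matrix/Kasteleyn computation yourself or cite the dimer literature directly for the positive free energy of the hexagonal dimer model.
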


	\begin{proof}
We assume without loss in generality that $\mathbf{0}=(0,0) \in \mathsf{A}$: otherwise we replace $\mathsf{A}$ by $\mathsf{A}'=\mathsf{A}-\mathbf{u}$ for some $\mathbf{u}\in \mathsf{A}$. Then $\mathbf{0}\in \mathsf{A}'$, and $\Omega_{\mathsf{A}'}$ is topologically conjugate to $\Omega _\mathsf{A}$ with conjugating map $\phi_\mathsf{u}\colon \Omega _{\mathsf{A}'}\longrightarrow \Omega _\mathsf{A}$ given by $\phi _\mathbf{m}(\omega )_\mathbf{n} =\omega _\mathbf{n}+\mathsf{u}$ for every $\omega \in \Omega_\mathsf{A}$ and $\mathbf{n}\in \mathbb{Z}^2$.

Suppose that $\mathsf{A}$ is not contained in a one-dimensional subspace of $\mathbb{R}^2$. Then we can find two elements $\mathbf{a},\mathbf{b}\in \mathsf{A}$ which are linearly independent over $\mathbb{R}$. We denote by $\Delta \subset \Gamma \subset \mathbb{Z}^2$ the subgroups generated by $\{\mathbf{a}+\mathbf{b},3\mathbf{a}\}$ and $\{\mathbf{a},\mathbf{b}\}$, respectively. For every $x=(x_\mathbf{m})_{\mathbf{m}\in \Delta }\in \{0,1\}^\Delta $ we define a permutation $\pi _x\in \Pi _\mathsf{A}$ as follows:
	\begin{enumerate}
	\item[(i)]
if $\mathbf{n}\in \mathbb{Z}^2\smallsetminus \Gamma $ we set $\pi _x(\mathbf{n})=\mathbf{n}$;
	\item[(ii)]
if $\mathbf{n}\in \Delta $ and $x_\mathbf{n}=0$ we set $\pi _x(\mathbf{n})=\mathbf{n}+\mathbf{a}$, $\pi _x(\mathbf{n} + \mathbf{a}) = \mathbf{n}+\mathbf{a}+\mathbf{b}$, $\pi _x(\mathbf{n}+2\mathbf{a}) = \mathbf{n}+2\mathbf{a}$, and $\pi _x(\mathbf{n}+\mathbf{b}) = \mathbf{n}+\mathbf{b}$;
	\item[(iii)]
if $\mathbf{n}\in \Delta $ and $x_\mathbf{n}=1$ we set $\pi _x(\mathbf{n})=\mathbf{n}+\mathbf{b}$, $\pi _x(\mathbf{n} + \mathbf{b}) = \mathbf{n}+\mathbf{a}+\mathbf{b}$, $\pi _x(\mathbf{n}+\mathbf{a}) = \mathbf{n}+\mathbf{a}$, and $\pi _x(\mathbf{n}+2\mathbf{a}) = \mathbf{n}+2\mathbf{a}$.
	\end{enumerate}
It is easy to check that $\pi _x$ is a permutation of $\mathbb{Z}^2$ which lies in $\Pi _\mathsf{A}$. By varying $x$ in $\{0,1\}^\Delta $ we conclude that $h(\Omega _\mathsf{A})\ge \log 2/|\mathbb{Z}^2/\Delta |$.

If $\mathsf{A}$ is contained in a one-dimensional subspace of $\mathbb{R}^2$, we can find a primitive element $\mathbf{v}\in \mathbb{Z}^2$ such that $\mathsf{A} \subset S \coloneqq \mathbb{Z}\mathbf{v}=\{k\mathbf{v}:k\in \mathbb{Z}\} \cong \mathbb{Z}$. Choose a second primitive element $\mathbf{w}\in \mathbb{Z}^2$ so that $\{\mathbf{v},\mathbf{w}\}$ forms a basis of $\mathbb{Z}^2$. The restriction (or projection) $\Omega _\mathsf{A}|_S$ of $\Omega _\mathsf{A}$ to $S=\mathbb{Z}\mathbf{v}=\{k\mathbf{v}:k\in \mathbb{Z}\} \cong \mathbb{Z}$ is a \textit{SFT} under the shift $\sigma ^\mathbf{v}$, which has positive entropy if and only if $|\mathsf{A}| = |\mathsf{A}\cap S|\ge 3$ (Proposition \ref{p:OmegaA}). Since the restrictions of $\Omega _\mathsf{A}$ to $S+k\mathbf{w},\,k\in \mathbb{Z}$, are all isomorphic and independent of each other, they all have the same entropy $h(\Omega _\mathsf{A}|_S)$ under $\sigma ^\mathbf{v}$, and $h(\Omega _\mathsf{A}) = h(\Omega _\mathsf{A}|_S) >0$ if and only if $|\mathsf{A}|\ge3$. This completes the proof of Theorem \ref{t:posent}.
	\end{proof}

The proof of Theorem \ref{t:posent} yields two corollaries. For the proof of the first of these corollaries we recall that a nonzero subgroup $\Gamma \subset \mathbb{Z}^d$ is called \textit{primitive} if $\mathbb{Z}^d/\Gamma $ is torsion-free. Similarly, a nonzero element $\mathbf{n}\in \mathbb{Z}^d$ is \textit{primitive} if the cyclic subgroup $\{k\mathbf{n}:k\in \mathbb{Z}\}\subset \mathbb{Z}^d$ is primitive.

	\begin{coro}
	\label{c:posent}
If $\mathsf{A}$ is a finite subset of $\mathbb{Z}^d$, $d\ge2$, then $\Omega _\mathsf{A}$ has positive entropy if and only if $|\mathsf{A}|\ge 3$.
	\end{coro}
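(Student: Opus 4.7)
The plan is to reduce to cases already settled in the paper: Proposition \ref{p:OmegaA} for one-dimensional restrictions and Theorem \ref{t:posent} for the two-dimensional construction with two linearly independent elements. As in Theorem \ref{t:posent}, I would first translate so that $\mathbf{0}\in\mathsf{A}$, since $\Omega_{\mathsf{A}-\mathbf{u}}$ and $\Omega_\mathsf{A}$ are topologically conjugate.

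The easy direction is $|\mathsf{A}|\le 2$. If $|\mathsf{A}|=1$ then $\Omega_\mathsf{A}$ is a single fixed point. If $\mathsf{A}=\{\mathbf{0},\mathbf{a}\}$, I would pick a primitive $\mathbf{v}\in\mathbb{Z}^d$ parallel to $\mathbf{a}$ and decompose $\mathbb{Z}^d=\mathbb{Z}\mathbf{v}\oplus\Lambda$ for a complementary sublattice $\Lambda$. Any $\omega\in\Omega_\mathsf{A}$ restricts on each coset $\mathbb{Z}\mathbf{v}+\mathbf{k}$ ($\mathbf{k}\in\Lambda$) to a $1$-dimensional configuration in an $\Omega_{\mathsf{A}'}$ with $|\mathsf{A}'|=2$, and by Proposition \ref{p:OmegaA}(2) this restriction takes only finitely many values, \emph{independently} of the other cosets. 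Only $O(N^{d-1})$ cosets meet $[0,N-1]^d$, so $\log|L_N(\Omega_\mathsf{A})|=O(N^{d-1})$ and $h(\Omega_\mathsf{A})=0$.

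For $|\mathsf{A}|\ge 3$ I would split into two subcases. If $\mathsf{A}$ lies on a line $\mathbb{R}\mathbf{v}$ for a primitive $\mathbf{v}\in\mathbb{Z}^d$, then $\mathsf{A}\subset\mathbb{Z}\mathbf{v}$ corresponds to $\mathsf{A}'\subset\mathbb{Z}$ with $|\mathsf{A}'|\ge 3$, and the restriction of $\Omega_\mathsf{A}$ to each line $\mathbb{Z}\mathbf{v}+\mathbf{k}$ is an independent copy of the $1$-dimensional SFT $\Omega_{\mathsf{A}'}$, which has positive entropy $h_0>0$ by Proposition \ref{p:OmegaA}(2). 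Multiplying the number of admissible length-$N$ patterns on each of the $\asymp N^{d-1}$ lines meeting $[0,N-1]^d$ gives $|L_N(\Omega_\mathsf{A})|\ge e^{cN^d+o(N^d)}$ for some $c>0$, hence $h(\Omega_\mathsf{A})>0$.

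The remaining subcase, $|\mathsf{A}|\ge 3$ with $\mathsf{A}$ not contained in any one-dimensional subspace, is the interesting one and where I expect the main (minor) obstacle: the rank-$2$ sublattice used in the proof of Theorem \ref{t:posent} has infinite index in $\mathbb{Z}^d$ for $d\ge 3$, so it produces only $e^{O(N^2)}$ patterns on $[0,N-1]^d$, not enough for positive entropy. The fix is to pick linearly independent $\mathbf{a},\mathbf{b}\in\mathsf{A}$, let $V\subset\mathbb{Z}^d$ be the primitive rank-$2$ sublattice of $\mathbb{Z}^d$ containing them, choose a primitive rank-$(d-2)$ complement $\Lambda$ so that $V\oplus\Lambda$ has finite index in $\mathbb{Z}^d$, and set $\Delta'=\Delta\oplus\Lambda$ where $\Delta\subset V$ is the rank-$2$ sublattice generated by $\{\mathbf{a}+\mathbf{b},3\mathbf{a}\}$ from the proof of Theorem \ref{t:posent}. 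Then $\Delta'$ has finite index in $\mathbb{Z}^d$. For every $x\in\{0,1\}^{\Delta'}$ I would define $\pi_x$ exactly as in (i)--(iii) of Theorem \ref{t:posent}, the key point being that the diamonds $\{\mathbf{n},\mathbf{n}+\mathbf{a},\mathbf{n}+\mathbf{b},\mathbf{n}+\mathbf{a}+\mathbf{b},\mathbf{n}+2\mathbf{a}\}$ remain pairwise disjoint when $\mathbf{n}$ ranges over $\Delta'$ (inherited from the disjointness over $\Delta$, since distinct cosets of $\Lambda$ are disjoint and the $V$-components lie in $\Delta$). This embeds a full two-shift on $\Delta'$ into $\Omega_\mathsf{A}$, giving $h(\Omega_\mathsf{A})\ge \log 2/[\mathbb{Z}^d:\Delta']>0$ and completing the proof.
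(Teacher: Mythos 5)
Your proof is correct and follows essentially the same route as the paper: the paper's own (very terse) proof chooses a primitive subgroup $\Gamma \cong \mathbb{Z}^2$ with $|\Gamma \cap \mathsf{A}|\ge 3$ and invokes Theorem \ref{t:posent} together with the coset-independence argument from its last paragraph, which is exactly your reduction --- your separate treatment of the collinear case and your explicit finite-index lattice $\Delta'=\Delta\oplus\Lambda$ merely spell out the ``slight extension'' that the paper leaves to the reader. One small correction to your stated ``key point'': the five-point diamonds in the proof of Theorem \ref{t:posent} are \emph{not} pairwise disjoint over $\Delta$, since $\mathbf{a}+\mathbf{b}\in\Delta$ and hence the point $\mathbf{n}+\mathbf{a}+\mathbf{b}$ of the diamond at $\mathbf{n}$ is the base point of the diamond at $\mathbf{n}+\mathbf{a}+\mathbf{b}$; the construction in fact relies on this chaining to produce bi-infinite orbits in the direction $\mathbf{a}+\mathbf{b}$. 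What you actually need (and do have) is only that diamonds based at points of distinct cosets $V+\boldsymbol{\lambda}$, $\boldsymbol{\lambda}\in\Lambda$, lie in disjoint cosets of $V$, so that the two-dimensional construction can be carried out independently, and consistently, within each such coset.
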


	\begin{proof}
Again we may assume that $\mathbf{0}\in \mathsf{A}$. If $|\mathsf{A}|=2$, Proposition \ref{p:OmegaA} and the last part of the proof of Theorem \ref{t:posent} can easily be adapted to show that $h(\Omega _\mathsf{A})=0$. If $|\mathsf{A}|\ge 3$, choose a primitive subgroup $\Gamma \subset \mathbb{Z}^d$ such that $\Gamma \cong \mathbb{Z}^2$ and $\Gamma \cap \mathsf{A}$ has at least three elements. A slight extension of the last part of the proof of Theorem \ref{t:posent} shows that $h(\Omega _\mathsf{A})>0$.
	\end{proof}

The second corollary of the proof of Theorem \ref{t:posent} concerns topological mixing of $\Omega _\mathsf{A},\,\mathsf{A}\subset \mathbb{Z}^d$. Recall that the \textit{SFT} $\Omega _\mathsf{A}\subset \mathsf{A}^{\mathbb{Z}^d}$ is \textit{mixing} if there exists, for any pair of nonempty finite sets $V,V'\subset \mathbb{Z}^d$, an $N \in \mathbb{N}$ with the following property: for every $\omega ,\omega '\in \Omega _\mathsf{A}$ and every $\mathbf{n}\in \mathbb{Z}^2$ with $\|\mathbf{n}\|\ge N$ there exists a $\omega ''\in \Omega _\mathsf{A}$ with $\omega ''|_V=\omega |_V$ and $\omega ''|_{V'+\mathbf{n}}=\omega '|_{V'+\mathbf{n}}$. Here $\| \cdot\|$ is the maximum norm on $\mathbb{Z}^d$, and $\omega |_W\in \mathsf{A}^W$ denotes the restriction of $\omega $ to its coordinates in a nonempty subset $W\subset \mathbb{Z}^d$.

	\begin{coro}
	\label{c:mixing}
If $\mathsf{D}=\mathsf{A}-\mathsf{A}$ is contained in a one-dimensional subspace of $\mathbb{R}^d$, then $\Omega _\mathsf{A}$ is not mixing.
	\end{coro}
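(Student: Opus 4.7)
The plan is to reduce the statement to a one-dimensional obstruction and then apply the bounded-sum estimate (equation~\eqref{eq:cohomology} combined with the boundedness of the transfer function $b_{K,l}$ from Proposition~\ref{p:average}), which yields $|\sum_{k=m}^{m+N-1}\omega_k-Nl|<K^2$ for every $\omega\in\Omega_K$ with $a(\omega)=l$. First, invoking Proposition~\ref{p:PiA}(4), I translate $\mathsf{A}$ so that $\mathbf{0}\in\mathsf{A}$ (assuming $|\mathsf{A}|\ge 2$; the case $|\mathsf{A}|=1$ is trivial). The hypothesis on $\mathsf{D}=\mathsf{A}-\mathsf{A}$ then places all of $\mathsf{A}$ on the line $\mathbb{R}\mathbf{v}$ for some primitive $\mathbf{v}\in\mathbb{Z}^d$, and since $\mathbb{R}\mathbf{v}\cap\mathbb{Z}^d=\mathbb{Z}\mathbf{v}$, the identification $k\mathbf{v}\leftrightarrow k$ turns $\mathsf{A}$ into a subset $\mathsf{A}'\subset\mathbb{Z}$ which, after a further translation along $\mathbb{Z}\mathbf{v}$, lies inside some $\mathsf{A}_K=\{0,\dots,K\}$ with $K\ge 1$.

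The key structural observation is then that every $\pi\in\Pi_\mathsf{A}$ permutes each coset $\mathbf{m}+\mathbb{Z}\mathbf{v}$ within itself, because $\pi(\mathbf{n})-\mathbf{n}\in\mathsf{A}\subset\mathbb{Z}\mathbf{v}$. Hence for every $\omega\in\Omega_\mathsf{A}$ the sequence $(\omega_{k\mathbf{v}})_{k\in\mathbb{Z}}$ (read through the identification above) is an element of $\Omega_{\mathsf{A}'}\subset\Omega_K$, so there is an integer $l=l(\omega)\in\mathsf{A}_K$ with
\[
\Bigl|\sum\nolimits_{k=m}^{m+N-1}\omega_{k\mathbf{v}}-Nl\Bigr|<K^2
\qquad\text{for all }m\in\mathbb{Z},\ N\ge 1.
\]

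To exhibit non-mixing, I pick two distinct integers $a<b$ in $\mathsf{A}'$ and fix $L$ large enough that $(L+1)(b-a)>2K^2$. Set $V=V'=\{k\mathbf{v}:0\le k\le L\}$, and consider the ``constant'' configurations $\omega^{(\mathbf{a})},\omega^{(\mathbf{b})}\in\Omega_\mathsf{A}$ defined by $\omega^{(\mathbf{c})}_\mathbf{n}=\mathbf{c}$ for every $\mathbf{n}\in\mathbb{Z}^d$ (these are the patterns of the translation permutations $\varsigma^{\mathbf{a}},\varsigma^{\mathbf{b}}\in\Pi_\mathsf{A}$, where $\mathbf{a}=a\mathbf{v}$ and $\mathbf{b}=b\mathbf{v}$). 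For any $M\ge 1$, if some $\omega''\in\Omega_\mathsf{A}$ were to satisfy both $\omega''|_V=\omega^{(\mathbf{a})}|_V$ and $\omega''|_{V+M\mathbf{v}}=\omega^{(\mathbf{b})}|_{V+M\mathbf{v}}$, then applying the displayed bound to $(\omega''_{k\mathbf{v}})_k$ with $N=L+1$ at $m=0$ and at $m=M$ would give $|(L+1)a-(L+1)l|<K^2$ and $|(L+1)b-(L+1)l|<K^2$, forcing $(L+1)(b-a)<2K^2$, a contradiction. Since $M$ may be chosen so that $\|M\mathbf{v}\|$ exceeds any prescribed threshold, no $N$ works in the mixing condition for this choice of $V,V',\omega^{(\mathbf{a})},\omega^{(\mathbf{b})}$, and $\Omega_\mathsf{A}$ fails to be mixing. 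I anticipate no real obstacle: once the coset-invariance of $\pi$ collapses the question to a single line, the one-dimensional estimate from Section~\ref{s:permutations} does all the work.
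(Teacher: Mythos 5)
Your proof is correct, and its skeleton is the same as the paper's: both arguments collapse the problem onto the line $\mathbb{Z}\mathbf{v}$ by noting that every $\pi \in \Pi _\mathsf{A}$ preserves each coset of $\mathbb{Z}\mathbf{v}$ (so restrictions of points of $\Omega _\mathsf{A}$ to that line land in a one-dimensional $\Omega _{\mathsf{A}'}\subset \Omega _K$), and both use the two constant configurations coming from the translations $\varsigma ^{a\mathbf{v}},\varsigma ^{b\mathbf{v}}$ as the patterns that cannot be glued. Where you genuinely diverge is in the one-dimensional obstruction: the paper quotes Proposition \ref{p:OmegaA}\,(1), i.e.\ the decomposition of $\Omega _K$ into the disjoint, clopen, shift-invariant pieces $\Omega _{K,l}$, and then says that ``the last part of the proof of Theorem \ref{t:posent}'' transfers non-mixing back to $\Omega _\mathsf{A}$; you instead invoke the quantitative estimate $\bigl|\sum_{k=m}^{m+N-1}\omega _k-Nl\bigr|<K^2$ obtained from \eqref{eq:cohomologyN} and the boundedness of the transfer function $b_{K,l}$ of Proposition \ref{p:average}. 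These are two packagings of the same shift-invariant quantity $a(\omega )$, but your version has the merit of making the lifting step completely explicit: a hypothetical gluing $\omega ''\in \Omega _\mathsf{A}$ restricts to a single element of $\Omega _K$ along the line, the estimate applies to that one element at both windows, and $(L+1)(b-a)<2K^2$ gives the contradiction for a concrete pair of cylinder sets and for arbitrarily distant shifts $M\mathbf{v}$. One small caveat, shared with the paper: when $|\mathsf{A}|=1$ the set $\mathsf{D}=\{\mathbf{0}\}$ does lie in a one-dimensional subspace while $\Omega _\mathsf{A}$ is a single point, which is (vacuously) mixing under the definition given; so that case is not ``trivial'' but rather an implicit exclusion, and your argument, like the paper's, really needs $|\mathsf{A}|\ge 2$ to produce the two distinct values $a<b$.
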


	\begin{proof}
If $\mathsf{D}=\mathsf{A}-\mathsf{A}$ is contained in a one-dimensional subspace $V\subset \mathbb{R}^d$, and if $\mathbf{m}\in \mathsf{A}$, then there exists a primitive element $\mathbf{v}\in \mathbb{Z}^d$ so that $\mathsf{A}'=\mathsf{A}-\mathbf{m}\subset S\coloneqq \mathbb{Z}\mathbf{v}$. By Proposition \ref{p:OmegaA}, the \textit{SFT} $\Omega _{\mathsf{A}'}|_S$ is not mixing under the action of $\sigma ^\mathbf{v}$, and the last part of the proof of Theorem \ref{t:posent} shows that neither $\Omega _{\mathsf{A}'}$ nor $\Omega _\mathsf{A}$ can be mixing.
	\end{proof}

Proposition \ref{p:OmegaA} shows that $\Omega _A$ is nonmixing for every finite set $\mathsf{A}\subset \mathbb{Z}$ with at least two elements. For finite subsets $\mathsf{A}\subset \mathbb{Z}^d$ with $d>1$, the situation can be different, as the following examples show.

	\begin{exam}
	\label{e:topmix2}
Let $\mathsf{A}=\{(0,0),(1,0),(0,1)\} \subset \mathbb{Z}^2$. Then the $\mathbb{Z}^2$-\textit{SFT} $\Omega _\mathsf{A}$ is topologically mixing.

Since the the following verification of this claim will reappear --- in a slightly more complicated form --- in the proof of Theorem \ref{t:topmix}, we shall describe it in detail.

We start a bit of notation. Let $\mathsf{A}\subset \mathbb{Z}^d,\, d\ge1$, be a finite set containing $\mathbf{0}$. A subset $p\subset \mathbb{Z}^d$ is \textit{allowed} if it consists either of a single point or of a bi-infinite sequence $\{p_k\}_{k\in \mathbb{Z}}$ such that $p_{k+1}-p_k\in \mathsf{A}\smallsetminus \{\mathbf{0}\}$ for every $k\in \mathbb{Z}$. If $p\in \mathbb{Z}^d$ is allowed, its \textit{future} $p^+$ is defined by
	\begin{displaymath}
p^+=
	\begin{cases}
\{\mathbf{n}\}&\textup{if}\enspace p=\{\mathbf{n}\}\enspace \textup{for some}\enspace \mathbf{n}\in \mathbb{Z}^d,
	\\
\{p_k\}_{k\ge1} &\textup{if}\enspace p=\{p_k\}_{k\in \mathbb{Z}}\enspace \textup{with}\enspace p_{k+1}-p_k\in \mathsf{A}\smallsetminus \{\mathbf{0}\}\enspace \textup{for every}\enspace k\in \mathbb{Z}.
	\end{cases}
	\end{displaymath}

The definition of the \textit{past} $p^-$ of $p$ is analogous.

\smallskip A collection $\mathsf{p}$ of disjoint subsets of $\mathbb{Z}^d$ is \textit{allowed} if it consists of allowed sets. More generally, if $S\subset \mathbb{Z}^d$ is a subset, a collection $\mathsf{q}$ of disjoint subsets of $S$ is \textit{allowed} if there exists an allowed collection $\mathsf{p}$ of disjoint subsets of $\mathbb{Z}^d$ such that $\mathsf{q}=\mathsf{p}\cap S=\{p\cap S:p\in \mathsf{p}\}$.

\smallskip Every permutation $\pi \in \Pi _{\mathsf{A}}$ can be represented by the allowed partition $\mathsf{p}^{(\pi )}$ of $\mathbb{Z}^d$ into orbits or `paths' of $\pi $. Conversely, if $\mathsf{p}$ is an allowed collection of disjoint subsets of $\mathbb{Z}^d$, we denote by $\mathsf{U}_\mathsf{p}=\bigcup_{p\in \mathsf{p}}p$ the \textit{union} of $\mathsf{p}$ and extend $\mathsf{p}$ to an allowed partition $\tilde{\mathsf{p}}$ of $\mathbb{Z}^2$ by adding to it the singletons $\{\mathbf{n}\},\,\mathbf{n}\in \mathbb{Z}^2\smallsetminus \mathsf{U}_{\mathsf{p}}$. The partition $\tilde{\mathsf{p}}$ is the set of orbits of a unique permutation $\pi _{\tilde{\mathsf{p}}}\in \Pi _\mathsf{A}$.

\smallskip We return to the above set $\mathsf{A}=\{(0,0),(1,0),(0,1)\}$. Take a finite set $Q_1\subset \mathbb{Z}^2$ and a permutation $\pi_1 \in \Pi_\mathsf{A}$, and consider the collection $\mathsf{p}^{(\pi_1)}(Q_1)=\{p\in \mathsf{p}^{(\pi _1)}:p\cap Q_1\ne \varnothing \}$ of all orbits in the partition $\mathsf{p}^{(\pi _1)}$ which pass through $Q_1$.\label{paths1} We modify the orbits $p\in \mathsf{p}^{(\pi _1)}(Q_1)$ outside $Q_1$ in such a way that they are still allowed and move almost all the time vertically. Denote this family of modified orbits by $\mathsf{p}_1'(Q_1)$. Then we do the same for another permutation $\pi_2 \in \Pi_\mathsf{A}$ and another finite set $Q_2\subset \mathbb{Z}^2$ with sufficient horizontal distance from $Q_1$, and obtain a modified allowed collection $\mathsf{p}_2'(Q_2)$. Since $Q_1$ and $Q_2$ have sufficient horizontal distance, $\mathsf{U}_{\mathsf{p}_1'(Q_1)} \cap \mathsf{U}_{\mathsf{p}_2'(Q_2)}=\varnothing $ and the union $\mathsf{p}'=\mathsf{p}_1'(Q_1) \cup \mathsf{p}_2'(Q_2)$ is again an allowed collection of orbits. Finally, we extend $\mathsf{p}'$ to an allowed partition $\tilde{\mathsf{p}}'$ of $\mathbb{Z}^2$ by adding singletons and obtain a $\pi '=\pi _{\tilde{\mathsf{p}}'}\in \Pi_\mathsf{A}$ which coincides on $Q_1$ and $Q_2$ with $\pi_1$ and $\pi_2$, respectively.

If the sets $Q_1$ and $Q_2$ are separated vertically rather than horizontally, the modification process described above has to be changed accordingly.

\smallskip Now the details: let $M\ge 0$, put $Q=Q^{(M)}=\{0,\dots ,M\}^2\subset \mathbb{Z}^2$, and set $\mathbb{E}_k=\{0,\dots ,M+k\}\times \mathbb{N} \subseteq \mathbb{Z}^2$. Fix $\pi _1\in \Pi _\mathsf{A}$. The collection $\mathsf{p}\coloneqq \mathsf{p}^{(\pi _1)}(Q)$ of all $\pi_1$-orbits intersecting $Q$ is finite: it has at most $|Q|$ elements.

We start an induction process by setting $\mathsf{p}^{(0)}=\mathsf{p}$ and $\mathsf{q}^{(0)}=\varnothing $. Suppose that $K\ge0$, and that we have found allowed collections of disjoint sets $\mathsf{p}^{(k)}\subset \mathsf{p}$ and $\mathsf{q}^{(k)}$, which satisfy the following conditions for $k=0,\dots ,K$:
	\begin{enumerate}
	\item[(i)]
$q^+\subset \mathbb{E}_k$ for every $q\in \mathsf{q}^{(k)}$,
	\item[(ii)]
$E_{p,q}\coloneqq (p\cap \tilde{Q}) \cap (q\cap \tilde{Q})=\varnothing $ for every $p\in \mathsf{p}^{(k)}$ and $q\in \mathsf{q}^{(k)}$, where $\tilde{Q}=Q\cup \pi _1(Q)$,
	\item[(iii)]
The sets $\{p\cap \tilde{Q}:p\in \mathsf{p}^{(k)}\}\cup \{q\cap \tilde{Q}:q\in \mathsf{q}^{(k)}\}$ form a partition of $\tilde{Q}$ which coincides with $\{p\cap \tilde{Q}:p\in \mathsf{p}\}$.
	\end{enumerate}
For the induction step we suppose that $\mathsf{p}^{(K)}\ne \varnothing $ and set $\mathsf{p}_1^{(K)}=\{p\in \mathsf{p}^{(K)}:p^+\subset \mathbb{E}_{K+1}\}$ and $\mathsf{p}_2^{(K)}=\mathsf{p}^{(K)}\smallsetminus \mathsf{p}_1^{(K)}$. If $\mathsf{p}_1^{(K)}\ne\varnothing $, put $\mathsf{p}^{(K+1)}=\mathsf{p}_2^{(K)}$. If $\mathsf{p}_1^{(K)}=\varnothing $, every $p\in \mathsf{p}^{(K)}$ will move into $\mathbb{E}_{K+2}$ after having passed through $\mathbb{E}_{K+1}\smallsetminus \mathbb{E}_K$ (this follows from the fact that every infinite orbit of $\pi _1$ can only move up or right in steps of size one). Since $\mathsf{p}$ is finite, and since $p^+\cap \mathbb{E}_{K+1}$ is finite for every $p \in \mathsf{p}^{(K)}$ by assumption, the set $F=\bigcup_{p\in \mathsf{p}^{(K)}}p^+\cap (\mathbb{E}_{K+1}\smallsetminus \mathbb{E}_K)$ is finite and contains at least one element $\mathbf{m}$ whose second coordinate is maximal (i.e., satisfies that $\mathbf{m}+l\mathbf{e}^{(2)}\notin F$ for every $l>0$). We denote by $p'=\{p'_k\}_{k\in \mathbb{Z}}\in \mathsf{p}^{(K)}$ the element containing $\mathbf{m}$ with $p_{k_0}'=\mathbf{m}$, say, and define another allowed set $p''=\{p_k''\}_{k\in \mathbb{Z}}$ by setting
	\begin{displaymath}
p_k''=
	\begin{cases}
p_k'&\textup{if}\enspace k\le k_0,
	\\
p_{k_0}'+(k-k_0)\mathbf{e}^{(2)}&\textup{if}\enspace k>k_0.
	\end{cases}
	\end{displaymath}
Put $\mathsf{p}^{(K+1)}=\mathsf{p}^{(K)}\smallsetminus \{p'\}$ and $\mathsf{q}^{(K+1)}=\mathsf{q}^{(K)}\cup \{p''\}$. By assumption, $\mathsf{q}^{(K+1)}$ is again allowed.

In either case, $\mathsf{p}^{(K+1)}\subsetneq \mathsf{p}^{(K)}$, and the sets $\mathsf{p}^{(k)},\mathsf{q}^{(k)}$, $k=0,\dots ,K+1$, satisfy the condition (i) -- (iii) above with $K+1$ replacing $K$.

Since $\mathsf{p}$ is finite and $\mathsf{p}^{(k+1)}\subsetneq \mathsf{p}^{(k)}$ for every $k\ge0$, there has to exist a $K\ge1$ with $K\le |Q|$ such that $\mathsf{p}^{(K)}=\varnothing $, and hence with $q^+\subset \mathbb{E}_{K+1}$ for every $q\in \mathsf{q}^{(K)}$.

We have arrived at an allowed collection $\mathsf{q}=\mathsf{q}^{(K)}$ of disjoint subsets of $\mathbb{Z}^2$ such that $q^+\subset \mathbb{E}_{K+1}$ for every $q\in \mathsf{q}$ and the partitions $\{q\cap \tilde{Q}:q\in \mathsf{q}\}$ and $\{p\cap \tilde{Q}:p\in \mathsf{q}\}$ coincide. We extend $\mathsf{q}$ to an allowed partition $\tilde{\mathsf{q}}$ of $\mathbb{Z}^2$ by adding singletons and obtain a permutation $\tilde{\pi }_1\coloneqq \pi _{\tilde{\mathsf{q}}}\in \Pi _\mathsf{A}$ with the properties that $\tilde{\pi }_1^k(\mathbf{n}) = \pi _1^k(\mathbf{n})$ for every $\mathbf{n}\in Q$ and $k\le 1$, and that $\tilde{\pi }_1^k(\mathbf{n})\in \mathbb{E}_{|Q|+1}$ for every $\mathbf{n}\in Q$ and $k\ge0$.

Exactly the same argument, but with directions reversed, allows us to find a permutation $\Tilde{\tilde{\pi }}_1\in \Pi _\mathsf{A}$ such that $\Tilde{\tilde{\pi }}_1^k(\mathbf{n}) = \tilde{\pi }_1^k(\mathbf{n})$ for every $\mathbf{n}\in Q$ and $k\ge -1$, and that $\Tilde{\tilde{\pi }}_1^k(\mathbf{n})\in \{-|Q|-1,\dots ,M\}\times (-\mathbb{N})$ for every $\mathbf{n}\in Q$ and $k\le0$.

The permutation $\Tilde{\tilde{\pi }}_1$ has the properties that $\Tilde{\tilde{\pi }}_1|_Q=\tilde{\pi }_1|_Q=\pi _1|_Q$, and that each orbit of $\Tilde{\tilde{\pi }}_1'$ lies in the vertical strip $\{-|Q|-1,\dots ,M+|Q|+1\}\times \mathbb{Z}$.

An analogous argument yields a permutation $\Tilde{\tilde{\pi }}_1'\in \Pi _\mathsf{A}$ such that $\Tilde{\tilde{\pi }}_1'|_Q=\pi _1|_Q$, and that each orbit of $\Tilde{\tilde{\pi }}_1$ lies in the horizontal strip $\mathbb{Z}\times \{-|Q|-1,\dots ,M+|Q|+1\}$.

\smallskip By translating this back to the shift space $\Omega _\mathsf{A}$ we conclude that there exists, for every $M\ge 0$, every pair $\omega _1,\omega _2\in \Omega _\mathsf{A}$, and every $\mathbf{m}\in \mathbb{Z}^2$ with $\|\mathbf{m}\|>7|Q^{(M)}|$, an element $\omega _3\in \Omega _\mathsf{A}$ with $\omega _3|_{Q^{(M)}}=\omega _1|_{Q^{(M)}}$ and $\omega _3|_{Q^{(M)}+\mathbf{m}}=\omega _2|_{Q^{(M)}+\mathbf{m}}$. Clearly this implies that $\Omega _\mathsf{A}$ is mixing.
	\end{exam}

Example \ref{e:topmix2} illustrates the following general result.

	\begin{theo}
	\label{t:topmix}
Let $d\ge2$, and let $\mathsf{A}\subset \mathbb{Z}^d$ be a finite set. Then the $\mathbb{Z}^d$-\textit{SFT} $\Omega _\mathsf{A}$ is topologically mixing if and only if $\mathsf{D}=\mathsf{A}-\mathsf{A}$ does not lie in a one-dimensional subspace of $\mathbb{R}^d$.
	\end{theo}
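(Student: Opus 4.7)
The only-if direction is already Corollary \ref{c:mixing}, so I focus on the if direction. By Proposition \ref{p:PiA}(4) one may translate $\mathsf{A}$ so that $\mathbf{0}\in\mathsf{A}$, and the hypothesis that $\mathsf{D}$ is not contained in a one-dimensional subspace then yields two linearly independent vectors $\mathbf{a},\mathbf{b}\in\mathsf{A}\setminus\{\mathbf{0}\}$. The plan is to generalize Example \ref{e:topmix2}: for each ray direction $\mathbf{v}\in\{\mathbf{a},\mathbf{b}\}$ I will modify any given permutation into one whose orbits through a finite window $Q$ are confined to an infinite tube around $Q$ in direction $\mathbf{v}$; when the two windows to be matched are separated by $\mathbf{n}$, I will pick whichever direction $\mathbf{v}$ is sufficiently transverse to $\mathbf{n}$. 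Such a $\mathbf{v}$ always exists once $\|\mathbf{n}\|$ is large, because linear independence of $\mathbf{a},\mathbf{b}$ yields a constant $c_0=c_0(\mathbf{a},\mathbf{b})>0$ with $\max_{\mathbf{v}\in\{\mathbf{a},\mathbf{b}\}}\mathrm{dist}(\mathbf{n},\mathbb{R}\mathbf{v})\ge c_0\|\mathbf{n}\|$, since a vector nearly parallel to both $\mathbf{a}$ and $\mathbf{b}$ must be nearly zero.

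The core technical step is a tube lemma: for every $\pi\in\Pi_\mathsf{A}$, every finite $Q\subset\mathbb{Z}^d$ and every $\mathbf{v}\in\{\mathbf{a},\mathbf{b}\}$, writing $\mathbf{u}$ for the other element of $\{\mathbf{a},\mathbf{b}\}$, there is a $\widetilde{\pi}\in\Pi_\mathsf{A}$ with $\widetilde{\pi}|_Q=\pi|_Q$ that equals the identity outside a tube $T_\mathbf{v}(Q)\subset Q+\{c\mathbf{u}+k\mathbf{v}:|c|\le\kappa,\;k\in\mathbb{Z}\}$ of some finite thickness $\kappa=\kappa(Q,\mathsf{A})$. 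The construction is orbit by orbit. Enumerate the maximal $\pi$-itineraries in $Q$ (contiguous $\pi$-orbit segments lying in $Q$, at most $|Q|$ of them), with forced predecessors $\mathbf{y}_i=\pi^{-1}(\mathrm{first}_i)\notin Q$ and successors $\mathbf{x}_i=\pi(\mathrm{last}_i)\notin Q$. For each itinerary $i$ define the corresponding $\widetilde{\pi}$-orbit to consist of an infinite incoming $\mathbf{v}$-ray, a short $\mathbf{u}$-connector ending at $\mathbf{y}_i$, the itinerary itself, a $\mathbf{u}$-connector leaving $\mathbf{x}_i$, and an infinite outgoing $\mathbf{v}$-ray; every step uses $\mathbf{u}$ or $\mathbf{v}\in\mathsf{A}$. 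On the remaining points of $T_\mathbf{v}(Q)$ set $\widetilde{\pi}=\mathrm{id}$, which is admissible since $\mathbf{0}\in\mathsf{A}$. Since $\mathbf{u}$ has infinite order in $\mathbb{Z}^d/\mathbb{Z}\mathbf{v}$ (linear independence), integer offsets $j_i,k_i$ for the two $\mathbf{u}$-connectors can be drawn from a sufficiently large but finite range so that the starting points of the $2r$ rays lie in pairwise distinct cosets of $\mathbb{Z}\mathbf{v}$ (making the rays pairwise disjoint) and stay outside $Q$; here $r\le|Q|$.

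Granted this lemma, mixing follows directly. Given finite $V,V'\subset\mathbb{Z}^d$ and $\pi_1,\pi_2\in\Pi_\mathsf{A}$, enclose $V\cup V'$ in a cube $Q$ and compute the corresponding thickness $\kappa$. For any $\mathbf{n}$ with $\|\mathbf{n}\|$ so large that $c_0\|\mathbf{n}\|$ exceeds $2\kappa\|\mathbf{u}\|+\mathrm{diam}(Q)$, pick $\mathbf{v}\in\{\mathbf{a},\mathbf{b}\}$ attaining the maximum above; then $T_\mathbf{v}(Q)$ and $T_\mathbf{v}(Q+\mathbf{n})$ are disjoint. Applying the tube lemma to $(\pi_1,Q,\mathbf{v})$ and $(\pi_2,Q+\mathbf{n},\mathbf{v})$, and defining $\pi''$ as the first modification on $T_\mathbf{v}(Q)$, the second on $T_\mathbf{v}(Q+\mathbf{n})$ and the identity elsewhere, produces a $\pi''\in\Pi_\mathsf{A}$ with $\pi''|_V=\pi_1|_V$ and $\pi''|_{V'+\mathbf{n}}=\pi_2|_{V'+\mathbf{n}}$, which translates back to the required $\omega''\in\Omega_\mathsf{A}$. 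The main obstacle is the bookkeeping inside the tube lemma: verifying disjointness of all entry/exit $\mathbf{u}$-connectors and $\mathbf{v}$-rays, ensuring that none of them re-enters $Q$ (which may force $\kappa$ to depend on the geometry of $Q$ and not just on $|Q|$), and assembling all the pieces into a well-defined permutation of $\mathbb{Z}^d$. Linear independence of $\mathbf{u}$ and $\mathbf{v}$ is used crucially both for the coset argument that produces ray-disjointness via $\mathbf{u}$-offsets and for the transverse separation of the two tubes in the final step.
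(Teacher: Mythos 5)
Your high-level strategy coincides with the paper's: the ``only if'' direction is Corollary \ref{c:mixing}, and for the ``if'' direction one confines all orbits meeting a finite window $Q$ to an infinite tube in one of two transverse directions, choosing the direction sufficiently transverse to the separation vector $\mathbf{n}$ so that the two tubes are disjoint. Your transversality estimate $\max_{\mathbf{v}\in\{\mathbf{a},\mathbf{b}\}}\mathrm{dist}(\mathbf{n},\mathbb{R}\mathbf{v})\ge c_0\|\mathbf{n}\|$ and the final gluing of two modifications with disjoint supports are fine. The gap is the tube lemma itself, which is the technical heart of the proof and which you assert rather than establish. Your disjointness argument controls only the $\mathbf{v}$-rays (via distinct cosets of $\mathbb{Z}\mathbf{v}$); it says nothing about (i) the straight $\mathbf{u}$-connectors colliding with $Q\cup \pi (Q)$, with the itineraries, or with one another, nor about (ii) the case where an orbit exits $Q$ and re-enters it, so that a single point $\mathbf{x}_i=\mathbf{y}_j$ is assigned both an outgoing and an incoming connector and $\widetilde{\pi }$ is not well defined. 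Point (i) is not mere bookkeeping: for general $\mathsf{A}$ an exit point $\mathbf{x}_i=\pi (\mathrm{last}_i)$ can lie on the near side of $Q$ relative to $\mathbf{u}$ (say $\mathsf{A}\ni (-5,1)$, $Q$ a cube, $\mathbf{u}=(1,0)$, $\mathbf{x}_i=\mathbf{q}+(-5,1)$ with $\mathbf{q}$ interior to $Q$), in which case every connector $\mathbf{x}_i,\mathbf{x}_i+\mathbf{u},\dots$ beyond a small fixed length marches straight back into $Q$; the few short connectors that remain admissible need not suffice to separate several exit points stacked in the same $\mathbb{Z}\mathbf{u}$-coset, and then some upward ray is forced through another exit point or its connector. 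No choice of offsets from ``a sufficiently large but finite range'' repairs this; the connectors would have to become staircases, and proving that staircases can be chosen pairwise disjoint is exactly the original difficulty.

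The paper circumvents this by not rebuilding connections from scratch. It translates $\mathsf{A}$ so that $\mathbf{0}$ is a \emph{vertex of the convex hull} of $\mathsf{A}$, picks $\mathbf{e}\in \mathsf{A}$ spanning an extremal ray of the cone $C(\mathsf{A})$, and obtains by hyperplane separation a functional $\mathbf{w}$ with $\langle \mathbf{w},\mathbf{e}\rangle =0$ and $\langle \mathbf{w},\mathbf{n}\rangle \ge c_2>0$ for every other $\mathbf{n}\in \mathsf{A}$. The original orbits, which are automatically pairwise disjoint, are then kept intact and rerouted one at a time: every non-$\mathbf{e}$ step strictly increases $\langle \mathbf{w},\cdot \rangle $, so each surviving forward orbit must cross the next level stratum $\mathbb{E}_{K+1}\smallsetminus \mathbb{E}_K$, and replacing the tail of the orbit whose crossing point has \emph{maximal} height by a straight $\mathbf{e}$-ray cannot collide with the already-confined orbits or with the remaining ones. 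This convex-geometric normalization and the maximal-height induction are the missing ideas; without them, or some substitute mechanism guaranteeing collision-free rerouting, your tube lemma does not go through.
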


	\begin{proof}
We start the proof of Theorem \ref{t:topmix} with a simplification and a bit of notation. As we observed in the proof of Theorem \ref{t:posent}, we may assume without loss in generality that $\mathbf{0}\in \mathsf{A}$; in fact, we shall assume that $\mathbf{0}$ is a vertex of the closed convex hull $\bar{\mathsf{A}}$ of $\mathsf{A}$ in $\mathbb{R}^d$.

Write $C(\mathsf{A})=\bigl\{\sum_{\mathbf{n}\in \mathsf{A}}t_\mathbf{n}\mathbf{n}: t_\mathbf{n}\ge0\enspace \textup{for every}\enspace \mathbf{n}\in \mathsf{A}\bigr\}\subset \mathbb{R}^d$ for the \textit{cone} of $\mathsf{A}$. Let $\mathbf{e}\in \mathsf{A}$ be another vertex of $\bar{\mathsf{A}}$ such that the ray $\{t\mathbf{e}:t\ge0\}\subset C(\mathsf{A})$ is extremal, and let $\mathsf{E}=\{t\mathbf{e}:t\in \mathbb{R}\}$. Put $\mathsf{B}=\mathsf{A}\smallsetminus \mathsf{E}$ and denote by $\bar{\mathsf{B}}$ the convex hull of $\mathsf{B}$. Then $\mathsf{E}$ and $\bar{\mathsf{B}}$ are disjoint convex subsets of $\mathbb{R}^d$, one of which is compact, and the strict hyperplane separation theorem (cf. \cite[Section 2.5.1]{BV} or \cite{Klee}) implies that there exist a vector $\mathbf{w}\in \mathbb{R}^d$ and real numbers $c_1<c_2$ such that $\langle \mathbf{w},\mathbf{v}\rangle \le c_1$ and $\langle \mathbf{w},\mathbf{v}'\rangle \ge c_2$ for every $\mathbf{v}\in \mathsf{E}$ and $\mathbf{v}'\in \bar{\mathsf{B}}$, where $\langle \cdot ,\cdot \rangle $ denotes the usual scalar product in $\mathbb{R}^d$. Since the first inequality holds for every $\mathbf{v}\in \mathsf{E}$ we conclude that $\langle \mathbf{w},\mathbf{e}\rangle =c_1=0$, whereas $\langle \mathbf{w},\mathbf{n}\rangle \ge c_2>0$ for every $\mathbf{n}\in \mathsf{B}$.

Now we can imitate the argument in Example \ref{e:topmix2}. Take a finite set $Q\subset \mathbb{Z}^d$ and set $L_1=\max_{\mathbf{n}\in Q}|\langle \mathbf{w},\mathbf{n}\rangle |$, $L_2=\max_{\mathbf{n}\in \mathsf{B}}\langle \mathbf{w},\mathbf{n}\rangle +1$, and $\mathbb{E}_k=\{\mathbf{k}\in \mathbb{Z}^d:-L_1\le \langle \mathbf{w},\mathbf{k}\rangle \le L_1+kL_2\}$, $k\ge0$. Then the induction argument in Example \ref{e:topmix2} can be used essentially unchanged, apart from replacing $\mathbf{e}^{(2)}$ with $\mathbf{e}$.

As there, we start with an element $\pi _1\in \Pi _\mathsf{A}$, consider the collection $\mathsf{p}\coloneqq \mathsf{p}^{(\pi _1)}(Q)$ of all $\pi_1$-orbits intersecting $Q$, and end up with an allowed collection $\mathsf{q}$ of disjoint subsets of $\mathbb{Z}^d$ and a $K\ge0$ such that $q^+\subset \mathbb{E}_{K}$ for every $q\in \mathsf{q}$ and the partitions $\{q\cap \tilde{Q}:q\in \mathsf{q}\}$ and $\{p\cap \tilde{Q}:p\in \mathsf{p}\}$ coincide, where $\tilde{Q}=Q\cup \pi _1(Q)$. Again we extend $\mathsf{q}$ to an allowed partition $\tilde{\mathsf{q}}$ of $\mathbb{Z}^d$ by adding singletons and obtain a permutation $\tilde{\pi }_1\coloneqq \pi _{\tilde{\mathsf{q}}}\in \Pi _\mathsf{A}$ with the properties that $\tilde{\pi }_1^k(\mathbf{n}) = \pi _1^k(\mathbf{n})$ for every $\mathbf{n}\in Q$ and $k\le 1$, and that $\tilde{\pi }_1^k(\mathbf{n})\in \mathbb{E}_{K}$ for every $\mathbf{n}\in Q$ and $k\ge0$. This takes care of forward orbits; the backward orbits of $\pi _1$ are dealt with by reversing directions as in Example \ref{e:topmix2}.

Finally we note that we can replace the vertex $\mathbf{e}\in \mathsf{A}$ by any other vertex $\mathbf{e}'\in \mathsf{A}$ giving rise to an extremal ray of $C(\mathsf{A})$ (here we are using our assumption that $\mathsf{D}=\mathsf{A}-\mathsf{A}$ is not one-dimensional). This allows us to complete the proof of Theorem \ref{t:topmix} as in Example \ref{e:topmix2}.
	\end{proof}

	\begin{rems}
	\label{r:entropy}
(1) The dynamical properties of $\Omega _\mathsf{A}$ discussed here, like topological mixing or topological entropy, are \textit{affine} invariants: for every finite set\/ $\mathsf{A}\subset \mathbb{Z}^d$ and every $\gamma \in \mathbb{Z}^d \rtimes \textup{GL}(d,\mathbb{Z})$, $\Omega _{\gamma \mathsf{A}}$ is obtained from $\Omega _\mathsf{A}$ through an affine re-parametrization of the coordinates. In particular, $\Omega _{\gamma \mathsf{A}}$ is mixing if and only if the same is true for $\Omega _\mathsf{A}$, and $h(\Omega _{\gamma \mathsf{A}})=h(\Omega _\mathsf{A})$.

\smallskip (2) Permutations with restricted movement can be considered in a more general context than we have done here. Let $\Gamma $ be a countable discrete group $\Gamma $, and let $\mathsf{A}\subset \Gamma $ be a nonempty finite set. Consider the set $\Pi _\textsf{A}\subset S^\infty (\Gamma )$ of all permutations $\pi \colon \gamma \mapsto \pi (\gamma )$ such that
	\begin{equation}
	\label{eq:PiII}
\omega _\gamma ^{(\pi )}\coloneqq \pi (\gamma )\gamma ^{-1} \in \mathsf{A} \enspace \textup{for every}\enspace \gamma \in \Gamma .
	\end{equation}
We put
	\begin{equation}
	\label{eq:OmegaF}
\Omega _\mathsf{A}=\{\omega ^{(\pi )}:\pi \in \Pi _\mathsf{A}\}
	\end{equation}
and observe as for $\Gamma =\mathbb{Z}$ that $\Omega _\mathsf{A}\subset \mathsf{A}^\Gamma $ is a shift of finite type for the right shift-action $\sigma $ of $\Gamma $ on $\mathsf{A}^\Gamma $, defined by
	\begin{equation}
	\label{eq:rightshift}
(\sigma ^\delta \omega )_\gamma =\omega _{\gamma \delta }
	\end{equation}
for every $\omega =(\omega _\gamma )_{\gamma \in \Gamma }\in \mathsf{A}^\Gamma $ and $\delta \in \Gamma $. This construction gives rise to a natural class of examples of $\Gamma $-\textit{SFT}'s for any countable discrete group $\Gamma $.
	\end{rems}

\section{Example \ref{e:topmix2}, revisited}\label{s:example}

As an illustration of properties of the multiparameter \textit{SFT}'s appearing in Section \ref{s:Z2} we return to the $\mathbb{Z}^2$-\textit{SFT} $\Omega _\mathsf{A}$, $\mathsf{A}=\{(0,0),(1,0),(0,1)\}$, in Example \ref{e:topmix2}. As described in Example \ref{e:topmix2}, every $\omega \in \Omega _\mathsf{A}$ determines a permutation $\pi ^{(\omega )}$ of $\mathbb{Z}^2$, each orbit of which consists either of a single point or of a bi-infinite sequence $(\mathbf{n}_k)_{k\in \mathbb{Z}}$ with $\mathbf{n}_{k+1}-\mathbf{n}_k\in \{(1,0),(0,1)\}$ for every $k\in \mathbb{Z}$. If we represent each infinite orbit of $\pi ^{(\omega )}$ by a bi-infinite directed polygonal path in $\mathbb{Z}^2$, we obtain a collection $\mathsf{p}^{(\pi ^{(\omega )})}$ of non-intersecting paths in $\mathbb{Z}^2$ moving either north or east at each step. Figure \ref{f:figure8} shows the intersection of $\mathsf{p}^{(\pi ^{(\omega )})}$ with a square $Q\subset \mathbb{Z}^2$. In the terminology of Example \ref{e:topmix2}, $\mathsf{q}=\mathsf{p}^{(\pi ^{(\omega )})}\cap Q$ is an \textit{allowed collection} of disjoint subsets of $Q$ or, for convenience, an \textit{allowed configuration of paths} in $Q$. Conversely, every allowed configuration of paths in $Q$ arises in this manner from some element of $\Omega _\mathsf{A}$.\vspace{-5mm}

	\begin{figure}[ht] \includegraphics[width=45mm]{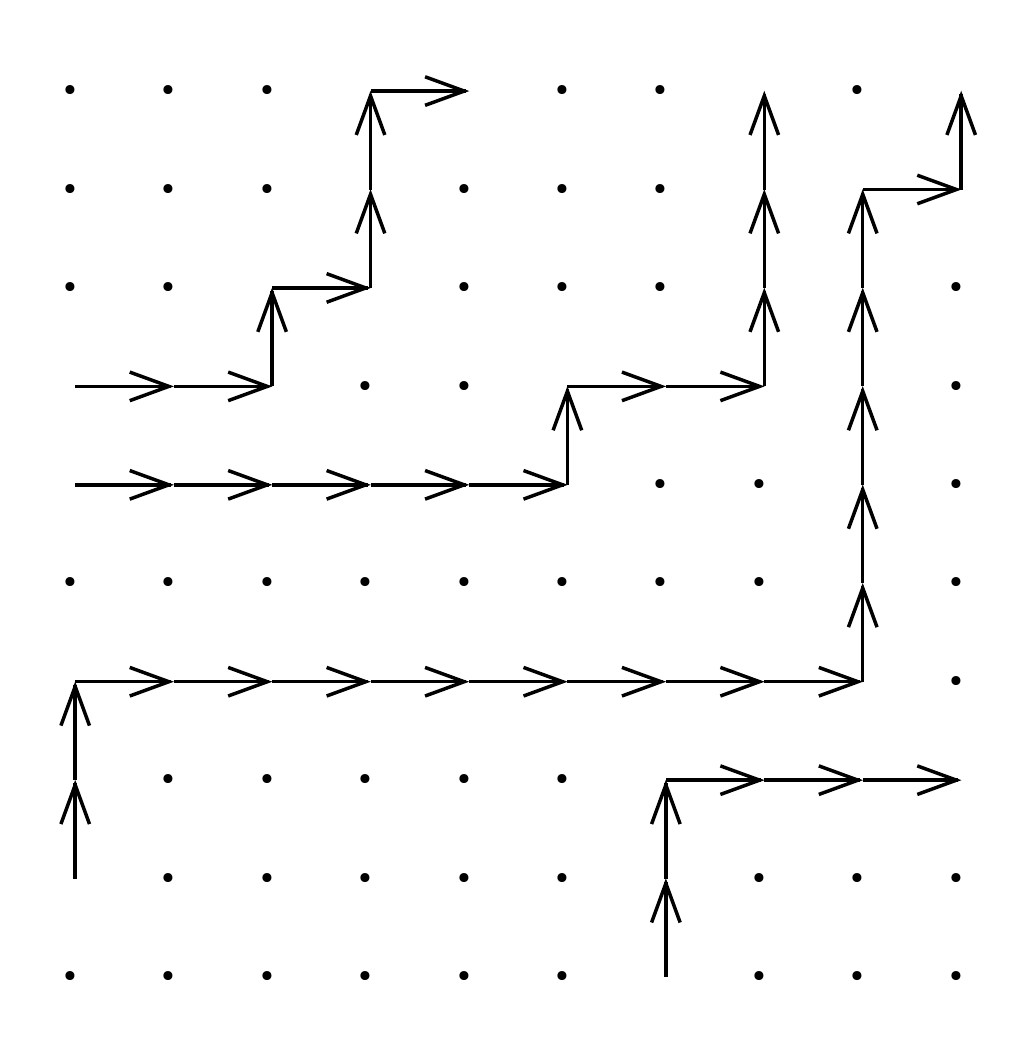} \vspace{-5mm}
\caption{An allowed configuration of paths in a square $Q$}\label{f:figure8}
	\end{figure}

The entropy of $\Omega _\mathsf{A}$ is positive by Theorem \ref{t:posent}. We are grateful to Christian Krattenthaler for pointing out and explaining to us \cite[Theorem 3.1]{Stembridge}, which yields an explicit formula for the number of allowed configurations of $k$ paths leading from the bottom and left edges to the top and right edges of the square $Q$ in Figure \ref{f:figure8}. By using this formula and varying both $k$ and the size $Q$ one obtains that the topological entropy of $\Omega _\mathsf{A}$ is about $\log 1.38\dots$

\smallskip According to Theorem \ref{t:topmix}, the \textit{SFT} $\Omega _\mathsf{A}$ is topologically mixing. However, it does not have the \textit{uniform filling property} (\cite[Definition 3.1]{Robinson-Sahin}), nor is it \textit{strongly irreducible} in the sense of \cite[Definition 1.10]{Burton-Steif}. The following proposition shows that $\Omega _A$ nevertheless has an abundance of periodic points, allowing to express its entropy in terms of the logarithmic growth rate of the number of its periodic points.

\smallskip For every finite-index subgroup $\Gamma \subset \mathbb{Z}^2$ denote by $\textup{Fix}_{\Gamma }(\Omega _\mathsf{A})=\{\omega \in \Omega _\mathsf{A}:\sigma ^\mathbf{n}\omega =\omega \enspace \textup{for every}\enspace \mathbf{n}\in \Gamma \}$ the set of $\Gamma $-periodic points in $\Omega _\mathsf{A}$. Then the following is true.

	\begin{prop}
	\label{p:periodic}
The set of periodic points is dense in $\Omega _\mathsf{A}$. Furthermore,
	\begin{equation}
	\label{eq:periodic}
h(\Omega _\mathsf{A})= \lim_{K\to\infty }\,\frac{1}{|\mathbb{Z}^2/\Delta _K|}\cdot \log\bigl|\textup{Fix}_{\Delta _K}(\Omega _\mathsf{A})\bigr|,
	\end{equation}
where $\Delta _K=\{2k(K^3+2K)\cdot (1,0)+2lK\cdot (1,1):k,l\in \mathbb{Z}\}\subset \mathbb{Z}^2$ for every $K\ge1$.
	\end{prop}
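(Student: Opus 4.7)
The plan is to establish density of periodic points and then the entropy formula via matching upper and lower bounds. For density, given $\omega \in \Omega_\mathsf{A}$ and a finite window $W \subset \mathbb{Z}^2$, I would apply the path-straightening procedure of Example \ref{e:topmix2} in both temporal directions to produce a $\pi \in \Pi_\mathsf{A}$ with $\pi|_W = \pi^{(\omega)}|_W$ all of whose nontrivial orbits through $W$ lie in a bounded rectangle $R$. For $K$ large enough that $R$ embeds in a single fundamental cell of $\Delta_K$, I would fill the complement of $R$ with a $\Delta_K$-invariant background, for instance the ``checkerboard'' $\omega^\star_{(i,j)}=(1,0)$ if $i+j$ is even and $(0,1)$ otherwise, which is $(1,1)$-invariant and hence $\Delta_K$-invariant for every $K$, and join it to $\pi$ by locally rerouting the finitely many paths crossing $\partial R$. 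The result is a $\Delta_K$-periodic element of $\Omega_\mathsf{A}$ agreeing with $\omega$ on $W$.

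For the upper entropy bound, let $F_K$ be the fundamental parallelogram for $\Delta_K$ spanned by $(2(K^3+2K),0)$ and $(2K,2K)$, so $|F_K|=|\mathbb{Z}^2/\Delta_K|=4K^2(K^2+2)$. Any $\Delta_K$-periodic point is determined by its restriction to $F_K$, which must be a locally admissible $\Omega_\mathsf{A}$-pattern; hence $|\textup{Fix}_{\Delta_K}(\Omega_\mathsf{A})|\le N(F_K)$, where $N(F_K)$ is the number of valid patterns on $F_K$. Since $|\partial F_K|=O(K^3)=o(|F_K|)$, the sequence $(F_K)$ is Folner in $\mathbb{Z}^2$, so the standard entropy formula for $\mathbb{Z}^d$-SFTs yields $\tfrac{1}{|F_K|}\log N(F_K)\to h(\Omega_\mathsf{A})$.

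The lower bound is the crux. Inside $F_K$ I would take a sub-parallelogram $A_K$ obtained by deleting a strip of horizontal width $b_K$ with $b_K\to\infty$ but $b_K=o(K^3)$ (for instance $b_K=K^2$), positioned along the ``seam'' where the diagonal twist of $\Delta_K$ wraps around. The goal is to show that every locally admissible $\Omega_\mathsf{A}$-pattern on $A_K$ extends to a $\Delta_K$-periodic configuration: one fills $F_K \smallsetminus A_K$ with the checkerboard template and applies a toroidal analogue of the inductive rerouting of Example \ref{e:topmix2} within the buffer strip. Since $A_K$ has height $2K$, the number of monotonic paths needing rerouting is $O(K)$, so a buffer of width $b_K=O(K^2)$ suffices. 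Because $|A_K|/|F_K|\to 1$ and $A_K$ is itself Folner, one obtains $|\textup{Fix}_{\Delta_K}(\Omega_\mathsf{A})|\ge N(A_K)$ with $\tfrac{1}{|F_K|}\log N(A_K)\to h(\Omega_\mathsf{A})$, matching the upper bound. The main technical obstacle is this toroidal rerouting: paths exiting $A_K$ must be absorbed into the checkerboard template on $F_K\smallsetminus A_K$ while respecting the $\Delta_K$-identifications, and the particular shape of $\Delta_K$ — horizontal period $\sim K^3$ considerably larger than the twist $(2K,2K)$ — is precisely what provides enough room for this rerouting to close up consistently on the torus.
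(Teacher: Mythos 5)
Your overall strategy (periodize by surgery against a fixed periodic background, then count) is genuinely different from the paper's, and both of its key steps have gaps. The first is in the density argument: a nontrivial orbit of any $\pi \in \Pi _\mathsf{A}$ is a bi-infinite path whose increments lie in $\{(1,0),(0,1)\}$, so it is monotone and unbounded in both time directions and cannot be made to ``lie in a bounded rectangle $R$''; the straightening of Example \ref{e:topmix2} only confines forward and backward orbits to half-infinite strips. The real problem is therefore to make the finitely many bi-infinite paths through $W$ \emph{close up on the torus} $\mathbb{Z}^2/\Delta _K$, i.e.\ become invariant under translation by an element of $\Delta _K$ in the closed positive quadrant. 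Splicing them into the checkerboard does not achieve this: the checkerboard has no singletons and no spare capacity, so attaching an incoming path to one of its staircases creates a site with two preimages unless you also cut that staircase, which orphans a backward half-path (forbidden, since orbits are singletons or bi-infinite); the required rerouting is global, not local, and is exactly the difficulty the paper's construction is built to circumvent. The paper instead takes \emph{two} allowed configurations $\mathsf{q},\mathsf{p}$ on a long diagonal polygon $R_K$ with the same boundary trace, glues $\mathsf{q}$ to a rotated, reflected, orientation-reversed copy of $\mathsf{p}$, and extends the result $2K(1,1)$-periodically along a diagonal strip (then horizontally with large separation); that symmetry is what makes all loose ends match so that every path becomes periodic.

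The second gap is the entropy lower bound, which rests on the assertion that \emph{every} locally admissible pattern on the large subregion $A_K$ extends to a $\Delta _K$-periodic configuration. That is a uniform-filling-type statement, and the paper explicitly records that $\Omega _\mathsf{A}$ does not have the uniform filling property and is not strongly irreducible; on the torus every non-singleton orbit must be a loop whose displacement vector lies in $\Delta _K\cap \mathbb{Z}_{\ge0}^2$, which imposes global constraints (quantized homology classes, minimal loop length $4K$) that an arbitrary pattern on $A_K$ need not be compatible with. So this claim is the crux, and it is deferred rather than proved. The paper avoids it entirely: it extends only the pairs $(\mathsf{q},\mathsf{p})$ drawn from a subfamily $\mathsf{C}$ of configurations sharing one fixed boundary trace on $\partial R_K$, of which there are at least $N_K/3^{3K^3}$; since $|\partial R_K\cap \mathbb{Z}^2|=O(K^3)=o(|[R_K]|)$, losing this factor and then squaring (to match $|\mathbb{Z}^2/\Delta _K|\approx 2|[R_K]|$) still yields $\liminf_{K\to\infty }\frac{1}{|\mathbb{Z}^2/\Delta _K|}\log |\textup{Fix}_{\Delta _K}(\Omega _\mathsf{A})|\ge h(\Omega _\mathsf{A})$. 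Your upper bound and the F\o lner normalization are fine, but to repair the proof you would need either to prove your extension claim for $A_K$ or to adopt a matching-boundary device like the paper's reflection gluing.
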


	\begin{proof}
Figure \ref{f:figure3} shows two allowed configurations $\mathsf{q}$ and $\mathsf{p}$ of paths in a polygon $R\subset \mathbb{R}^2$ with edges $a,b,c,d,e,f$ of lengths $|a|=|d|$ and $|b|=|c|=|e|=|f|$, whose intersections with the boundary $\partial R$ of $R$ (i.e., with the union of the edges of $R$) coincide.\vspace{-5mm}

	\begin{figure}[ht]\centering
\subfloat[The configuration \textsf{q}]{\includegraphics[width=75mm]{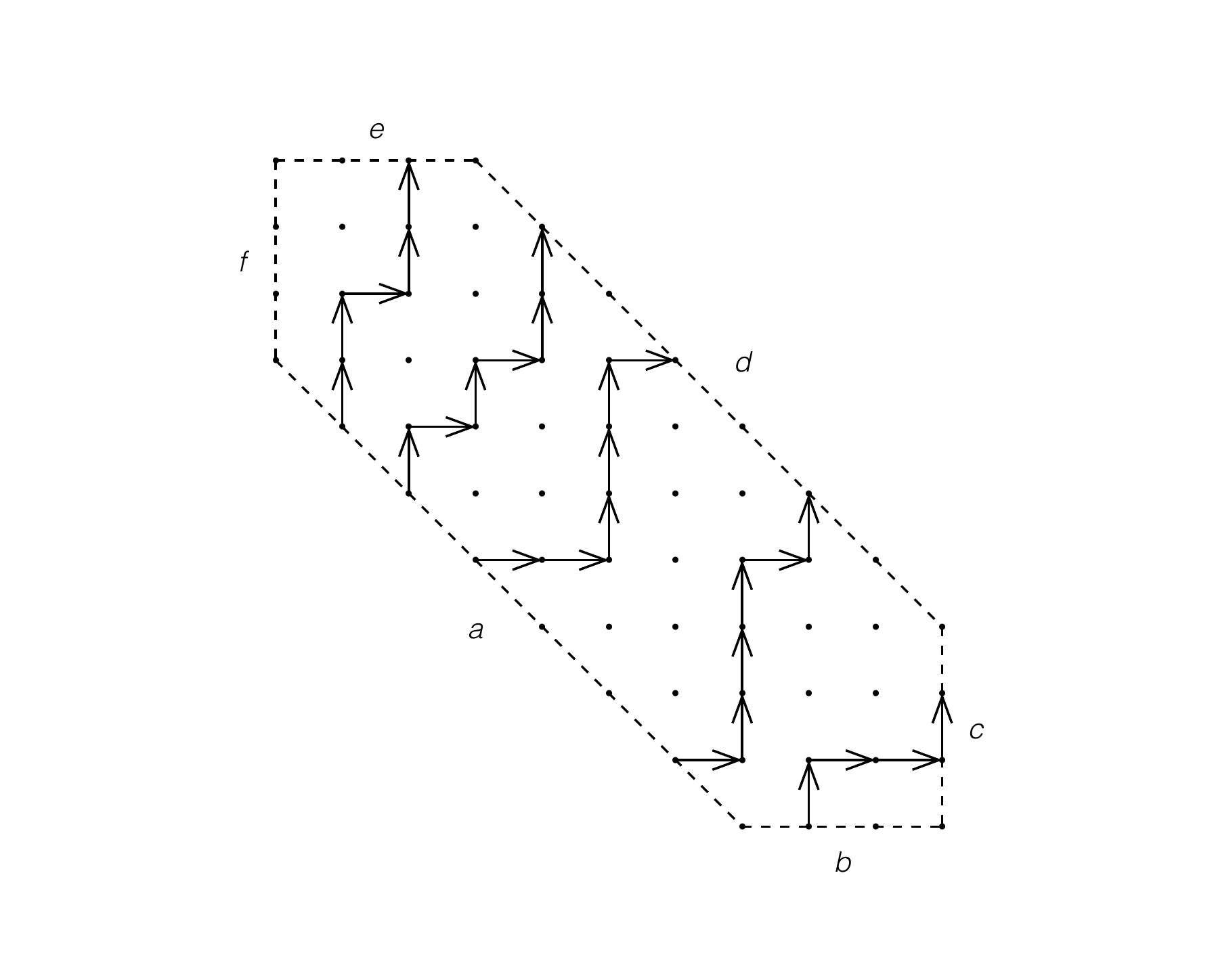}} \hspace{-20mm}
\subfloat[The configuration \textsf{p}]{\includegraphics[width=75mm]{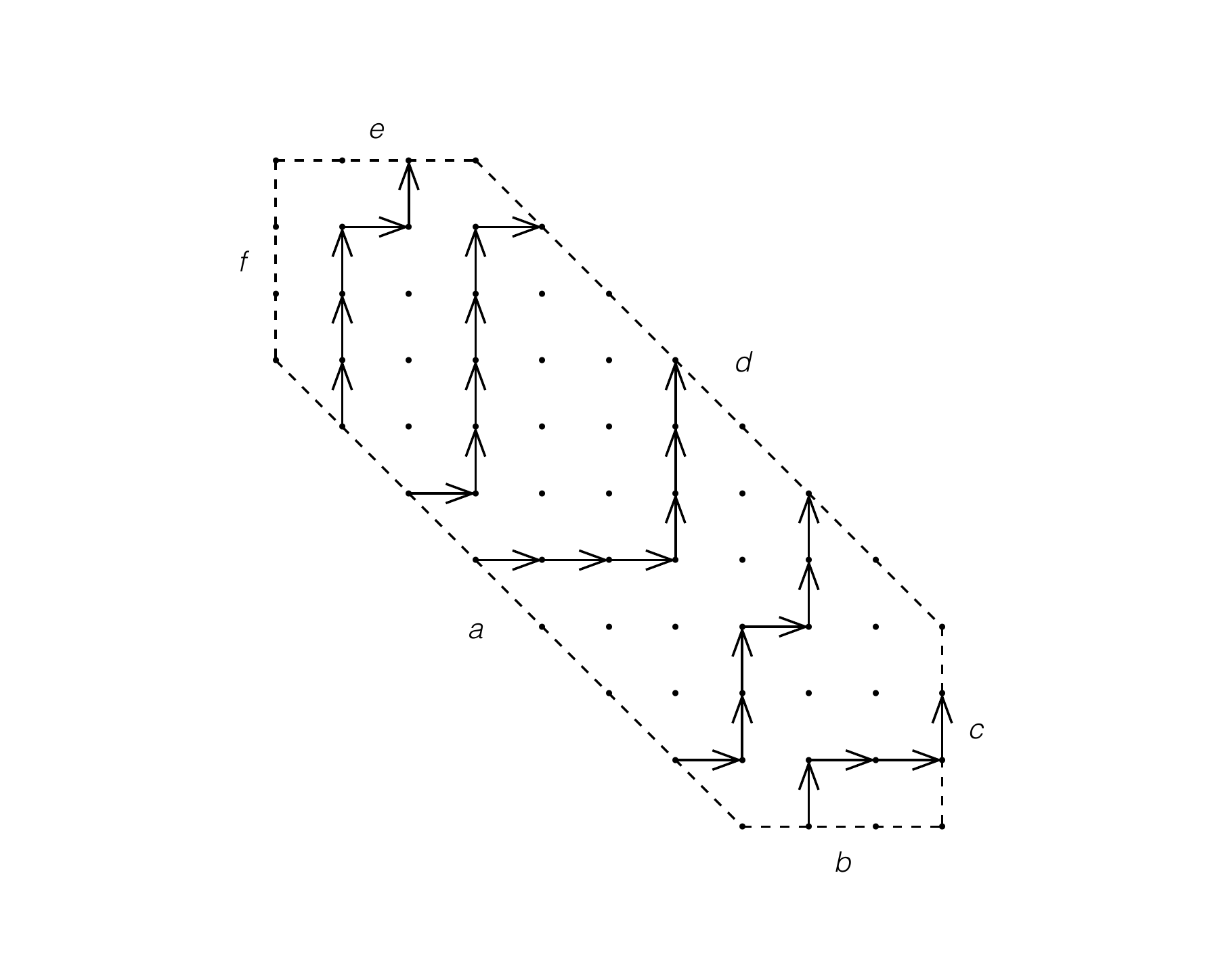}
}\caption{Two allowed configurations of paths in a polygon $R$}\label{f:figure3}
	\end{figure}

If we rotate the configuration $\mathsf{p}$ in Figure \ref{f:figure3} clockwise by 90$^\circ$, flip the resulting pattern horizontally, and reverse the direction of all arrows, we obtain another allowed configuration $\mathsf{p}'$ of paths in $R$, as shown in Figure \ref{f:figure4}.

	\begin{figure}[ht]
\includegraphics[width=80mm]{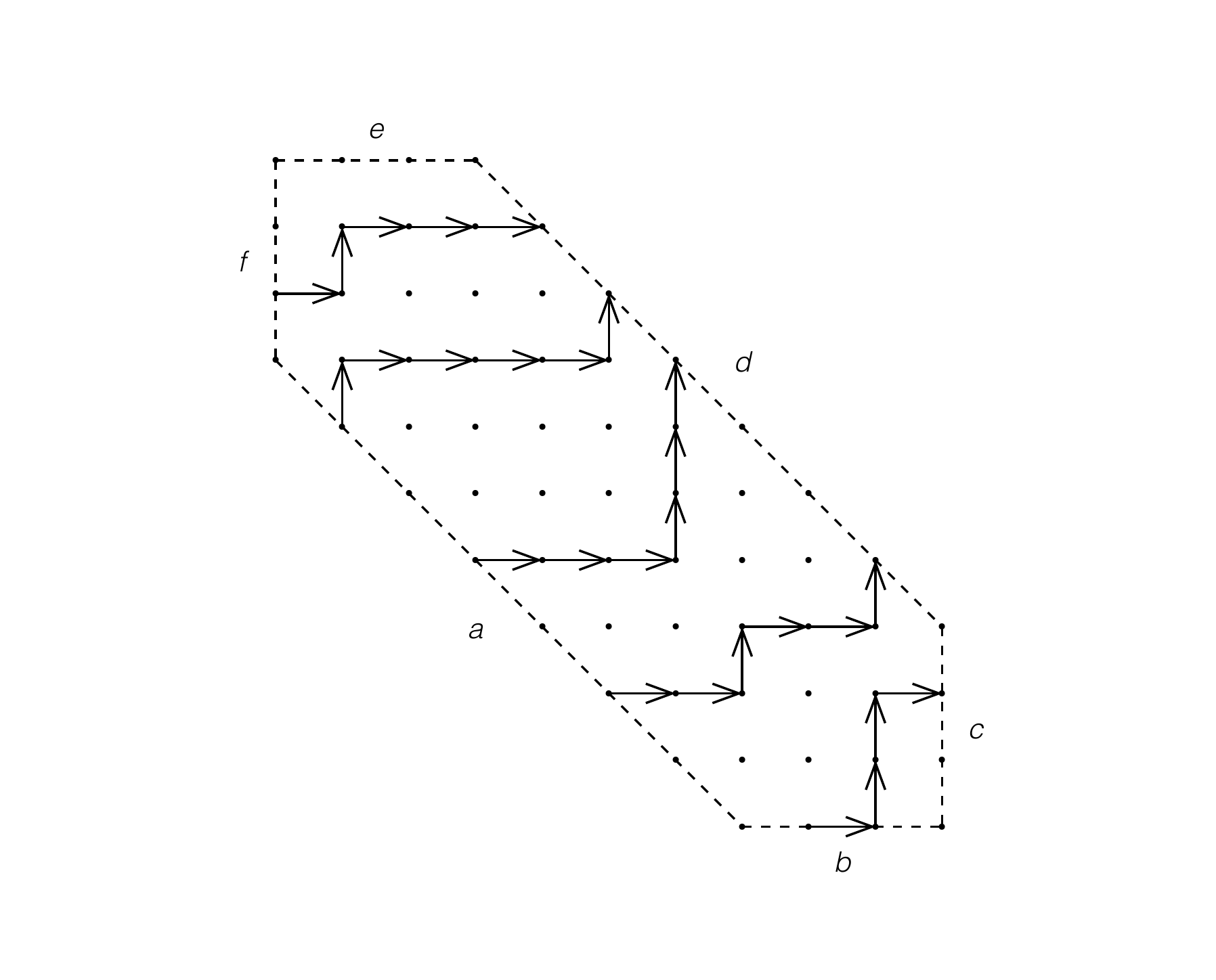}\vspace{-5mm}\caption{The configuration $\mathsf{p}'$.}\label{f:figure4}
	\end{figure}

In Figure \ref{f:figure5} we glue together the configurations $\mathsf{q}$ and $\mathsf{p}'$ along the edges labelled $d$ and $a$, respectively, put in `dotted' arrows to connect up loose ends, and obtain an allowed configuration $(\mathsf{q},\mathsf{p})$ of paths in a bigger polygon $\tilde{R}\subset \mathbb{Z}^2$.

	\begin{figure}[ht]
\includegraphics[width=65mm]{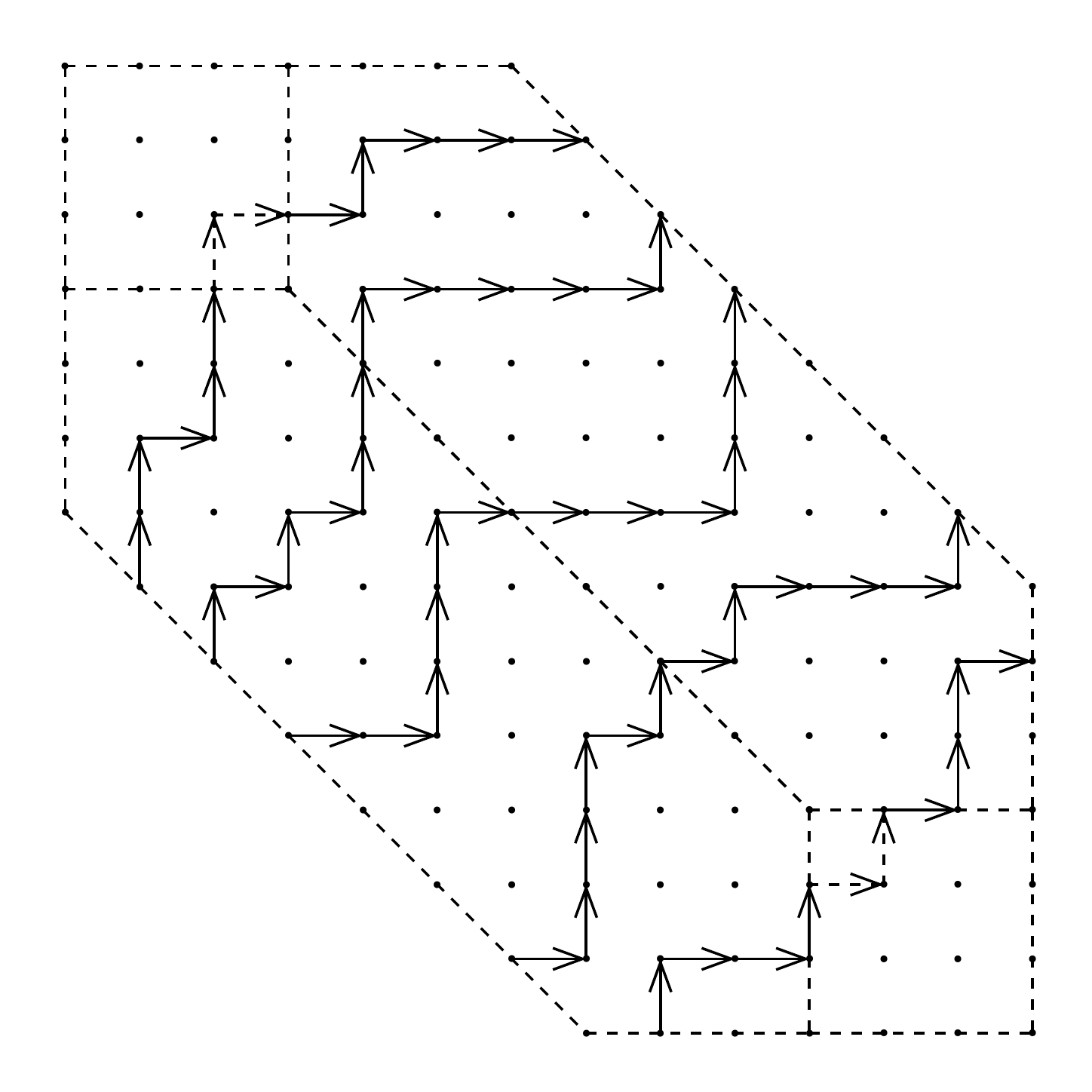}\vspace{-4mm}\caption{The configuration $(\mathsf{q},\mathsf{p})$ of paths in the bigger polygon $\tilde{R}$.}\label{f:figure5}
	\end{figure}

In Figure \ref{f:figure6} we extend the configuration $(\mathsf{q},\mathsf{p})$ in Figure \ref{f:figure5} periodically, again putting in dotted arrows to connect loose ends.

	\begin{figure}[ht]
\includegraphics[width=80mm]{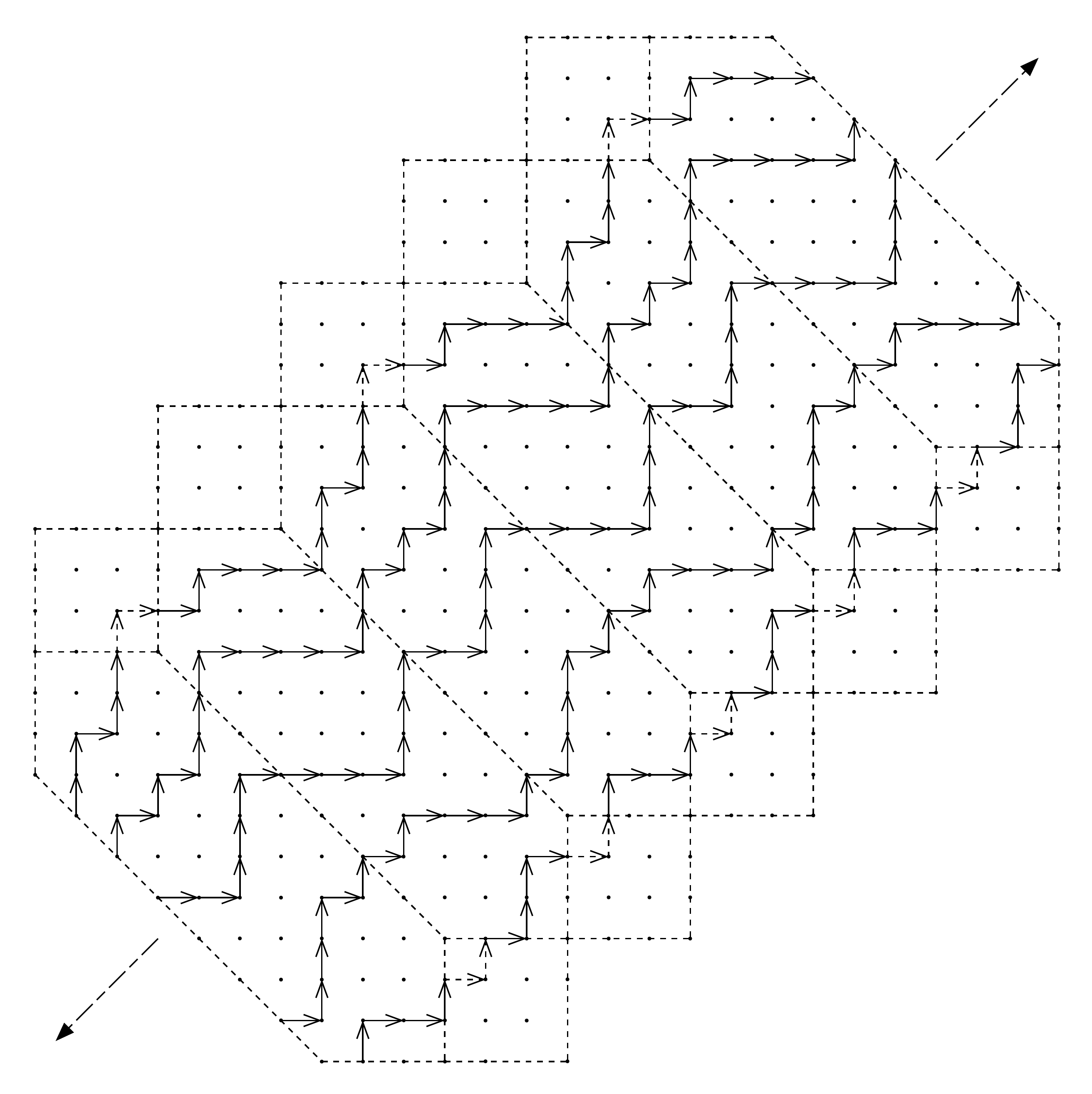}\vspace{-2mm} 	\caption{The periodic extension of the configuration $(\mathsf{q},\mathsf{p})$ in Figure \ref{f:figure5}.}\label{f:figure6}
	\end{figure}

Finally we repeat the diagonal strip in Figure \ref{f:figure6} periodically in the horizontal direction, allowing sufficient separation between the strips. Figure \ref{f:figure7} shows the repetition of Figure \ref{f:figure6} with a horizontal period of size $(|a|+2|b|\sqrt2)\sqrt2$ (the length of the hypothenuse of the triangle $ABC$).

	\begin{figure}[ht]
\includegraphics[width=135mm]{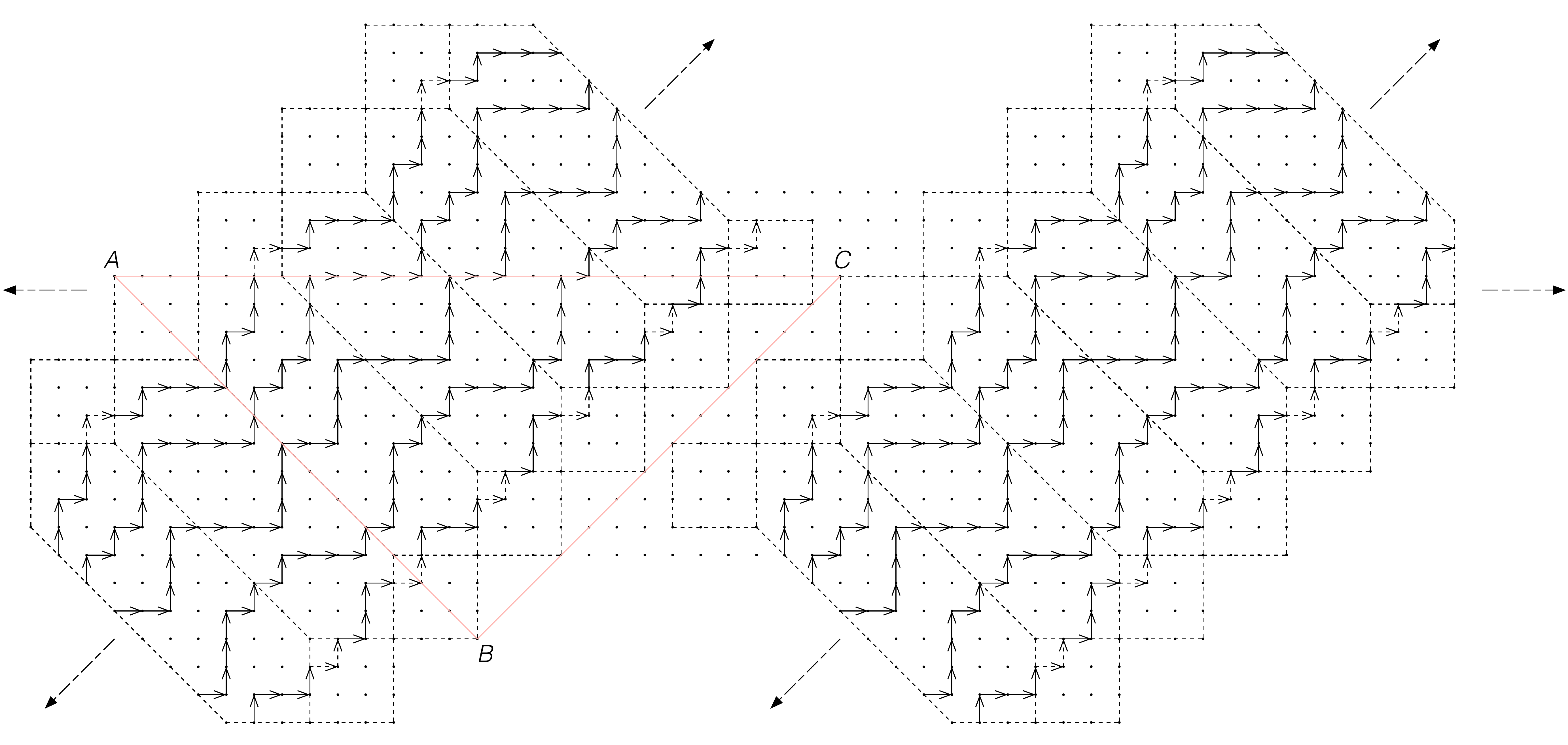}\vspace{-2mm} 	\caption{The periodic repetition of the diagonal strip in Figure \ref{f:figure6}.}\label{f:figure7}
	\end{figure}

\smallskip If we assume that the edges $a$ and $b$ of the polygon $R\subset \mathbb{Z}^2$ in Figure \ref{f:figure3} have lengths $|a|=K^3\sqrt2$ and $|b|=K$ and write $R_K$ instead of $R$ to emphasise the dependence of this polygon on the integer $K$, then the periodic configuration of paths in Figure \ref{f:figure7} corresponds to an element $\omega '\in \textup{Fix}_{\Delta _K}(\Omega _\mathsf{A})$, where
	\begin{equation}
	\label{eq:DeltaK}
	\begin{aligned}
\Delta _K&=\bigl\{k\bigl((|a|+2|b|\sqrt2)\sqrt2\bigr)(1,0) + 2l|b|(1,1):k,l\in \mathbb{Z}\bigr\}
	\\
&= \{2k(K^3+2K)(1,0) + 2lK(1,1):k,l\in \mathbb{Z}\}\subset \mathbb{Z}^2.
	\end{aligned}
	\end{equation}
If $\omega \in \Omega _\mathsf{A}$ is a point giving rise to the configuration $\mathsf{q}$ of paths in the polygon $R_K$, then the coordinates of $\omega $ and $\omega '$ coincide on $R_K$. Since $K$ was arbitrary, this proves the density of periodic points in $\Omega _A$.

For every $K\ge2$, the intersection $[R_K] \coloneqq R_K \cap \mathbb{Z}^2$ has cardinality $|[R_K]| = (2K+1)(K^3+1)+K^2$. The configuration $\mathsf{q}$ of paths in $R_K$ arising from an element $\omega \in \Omega _\mathsf{K}$ is determined by the coordinates $\omega _\mathbf{n},\,\mathbf{n}\in R_K$, of $\omega $. Conversely, $\mathsf{q}$ determines all the coordinates $\omega _\mathbf{n},\,\mathbf{n}\in R_K$, with the exception of (some of) the coordinates $\omega _\mathbf{n}$ with $\mathbf{n}$ lying on the boundary $\partial R_K$ of $R_K$ (cf. Figure \ref{f:figure3}). We write $N_K$ for the number of allowed configurations of paths in $R_K$ and conclude that
	\begin{equation}
	\label{eq:limits}
\lim_{K\to\infty }\frac{1}{|[R_K]|}\cdot \log N_K = \lim_{K\to\infty }\frac{1}{|[R_K]|}\cdot \log |\Pi _{[R_K]}(\Omega _\mathsf{A})| = h(\Omega _\mathsf{A}),
	\end{equation}
where we are using that $([R_K])_{K\ge1}$ is a F\o lner sequence in $\mathbb{Z}^2$, and where $\Pi _F\colon \Omega _\mathsf{A}\linebreak[0]\longrightarrow \mathsf{A}^F$ denotes the projection of every $\omega \in \Omega _\mathsf{A}$ onto its coordinates in a set $F\subset \mathbb{Z}^2$.

Since $\partial R_K$ intersects $\mathbb{Z}^2$ in $2K^3+4K \le 3K^3$ points, a set $\mathsf{C}$ of at least $M_K\coloneqq N_K/|\mathsf{A}|^{3K^3} = N_K/3^{3K^3}$ of these configurations must coincide on $\partial R_K$. By taking all pairs of elements in $\mathsf{C}$ we obtain at least $M_K^2=N_K^2/3^{6K^3}$ distinct allowed configurations $(\mathsf{q},\mathsf{p}),\,\mathsf{q},\mathsf{p}\in \mathsf{C}$, of paths in the polygon $\tilde{R}_K$ in Figure \ref{f:figure5}, each of which determines an element of $\textup{Fix}_{\Delta _K}(\Omega _\mathsf{A})$, where $\Delta _K\subset \mathbb{Z}^2$ is given in \eqref{eq:DeltaK} (cf. Figure \ref{f:figure7}). Hence $|\textup{Fix}_{\Delta _K}(\Omega _\mathsf{A})| \ge N_K^2/3^{6K^3}$. We set $[\tilde{R}_K]=\tilde{R}_K\cap \mathbb{Z}^2$ and note that
	\begin{displaymath}
|[\tilde{R}_K]| =(4K+1)(K^3+1)+4K^2 =2|[R_K]| +2K^2 -K^3-1.
	\end{displaymath}
Since $|\mathbb{Z}^2/\Delta _K| = 4K^4+8K^2$, we obtain that
	\begin{equation}
	\label{eq:period9}
	\begin{aligned}
\tfrac{1}{|\mathbb{Z}^2/\Delta _K|}\log |\textup{Fix}_{\Delta _K}(\Omega _\mathsf{A})| &= \tfrac{|[\tilde{R}_K]|}{4K^4+8K^2}\cdot \tfrac{2|[R_K]|}{|[\tilde{R}_K]|} \cdot \tfrac{1}{2|[R_K]|} \log |\textup{Fix}_{\Delta _K}(\Omega _\mathsf{A})|
	\\
&\ge\tfrac{4K^4}{4K^4+8K^2}\cdot \tfrac{2|[R_K]|}{|[\tilde{R}_K]|}\cdot  \tfrac{1}{|[R_K]|} \log (N_K/3^{3K^3})
	\\
&\ge\tfrac{4K^4}{4K^4+8K^2}\cdot \tfrac{2|[R_K]|}{|[\tilde{R}_K]|} \cdot \bigl(\tfrac{1}{|[R_K]|} \log N_K - \tfrac{3K^3}{2K^4}\log3 \bigr)
	\end{aligned}
	\end{equation}
for every $K\ge2$. By letting $K\to\infty $ and using \eqref{eq:limits} we obtain that
	\begin{displaymath}
\liminf_{K\to\infty } \tfrac{1}{|\mathbb{Z}^2/\Delta _K|}\log |\textup{Fix}_{\Delta _K}(\Omega _\mathsf{A})|\ge h(\Omega _\mathsf{A}).
	\end{displaymath}
Since the opposite inequality $\limsup_{K\to\infty } \frac{1}{|\mathbb{Z}^2/\Delta _K|}\log |\textup{Fix}_{\Delta _K}(\Omega _\mathsf{A})|\le h(\Omega _\mathsf{A})$ is obvious, we have proved \eqref{eq:periodic}.
	\end{proof}

	\begin{coro}
	\label{c:periodic2}
	$$
\limsup_{K\to\infty }\frac{1}{K^2}\log |\textup{Fix}_{K\mathbb{Z}^2}(\Omega _\mathsf{A})| = \lim_{K\to\infty }\frac{1}{16K^6}\log |\textup{Fix}_{4K^3\mathbb{Z}^2}(\Omega _\mathsf{A})|=h(\Omega _\mathsf{A}).
	$$
	\end{coro}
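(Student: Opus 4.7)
The plan is to prove the three-way equality in two parts: first establish the upper bound $\limsup\frac{1}{|\mathbb{Z}^2/\Gamma_K|}\log|\textup{Fix}_{\Gamma_K}(\Omega_\mathsf{A})|\le h(\Omega_\mathsf{A})$ for both subgroup sequences $\Gamma_K\in\{K\mathbb{Z}^2,\,4K^3\mathbb{Z}^2\}$, then produce a matching lower bound for $4K^3\mathbb{Z}^2$ by adapting the construction of Proposition \ref{p:periodic}, and finally deduce the lower bound for $K\mathbb{Z}^2$ via a subsequence argument. The upper bound is immediate: for any fundamental domain $F$ of a finite-index subgroup $\Gamma\subset\mathbb{Z}^2$ the restriction map $\omega\mapsto\omega|_F$ embeds $\textup{Fix}_\Gamma(\Omega_\mathsf{A})$ into $\Pi_F(\Omega_\mathsf{A})$, and since the cubes $[0,K)^2\cap\mathbb{Z}^2$ and $[0,4K^3)^2\cap\mathbb{Z}^2$ are F\o lner sequences in $\mathbb{Z}^2$, \eqref{eq:limits} immediately yields the desired upper bounds.

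For the main lower bound I would re-run the proof of Proposition \ref{p:periodic} with the polygon $R_K'$ of edge lengths $|a|=(2K^3-2K)\sqrt 2$ and $|b|=K$; then $|[R_K']|\sim 4K^4$, $|\partial R_K'|=4K^3$, and the construction produces periodic points invariant under the lattice
\[
\tilde\Delta_K=\{m(4K^3,0)+n(2K,2K):m,n\in\mathbb{Z}\}
\]
of index $8K^4$, which satisfies $4K^3\mathbb{Z}^2\subset\tilde\Delta_K$ with index $2K^2$ (since $(0,4K^3)=-(4K^3,0)+2K^2(2K,2K)$). Accordingly the torus $\mathbb{Z}^2/4K^3\mathbb{Z}^2$ decomposes into $2K^2$ translates of a $\tilde\Delta_K$-fundamental domain, each holding one pair $(\mathsf{q}_i,\mathsf{p}_i)$ of $R_K'$-copies, so altogether the torus contains $4K^2$ copies of $R_K'$. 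Letting $\mathsf{C}$ be the boundary-matching class from Proposition \ref{p:periodic}, \eqref{eq:limits} gives $|\mathsf{C}|\ge N_{K'}/3^{|\partial R_K'|}\ge\exp\bigl(h(\Omega_\mathsf{A})\cdot 4K^4-o(K^4)\bigr)$, and by construction all elements of $\mathsf{C}$ agree on $\partial R_K'$. Since the gluing of $\mathsf{q}_i$ to the reflected $\mathsf{p}_i$, and the matching between successive pairs periodic along $(2K,2K)$, are governed entirely by this fixed boundary pattern, every independent assignment of $\mathsf{C}$-elements to the $4K^2$ copies yields an allowed (and pairwise distinct) configuration on the torus, so
\[
|\textup{Fix}_{4K^3\mathbb{Z}^2}(\Omega_\mathsf{A})|\ge|\mathsf{C}|^{4K^2}\ge\exp\bigl(h(\Omega_\mathsf{A})\cdot 16K^6-o(K^6)\bigr),
\]
proving $\liminf_{K\to\infty}\frac{1}{16K^6}\log|\textup{Fix}_{4K^3\mathbb{Z}^2}(\Omega_\mathsf{A})|\ge h(\Omega_\mathsf{A})$ and hence the second equality.

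The first equality then follows from the second by restricting to the subsequence $K=4n^3$, $n\ge 1$: for such $K$ one has $K\mathbb{Z}^2=4n^3\mathbb{Z}^2$ and $K^2=16n^6$, so the limsup over all $K$ is bounded below by the limit along this subsequence, which equals $h(\Omega_\mathsf{A})$. The main obstacle is the combinatorial verification of the independence claim in the main lower bound: one must check that, in the tiling of $\mathbb{Z}^2/4K^3\mathbb{Z}^2$ by the $2K^2$ pairs stacked along the $(1,1)$-direction, adjacent pairs glue compatibly irrespective of which $\mathsf{C}$-element is placed in each interior. This is an iterated version of the single-pair matching already handled in Proposition \ref{p:periodic}, but it requires one to keep careful track of how the reflection step and the periodic extension along $(2K,2K)$ interact with the fixed boundary pattern that $\mathsf{C}$ prescribes on all of $\partial R_K'$.
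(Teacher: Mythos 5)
Your proposal is correct and follows essentially the same route as the paper: the upper bound via restriction to a fundamental domain and the F\o lner property, and the lower bound by tiling the torus with independently chosen boundary-matching configurations from the construction of Proposition \ref{p:periodic} and extending periodically, with the first equality recovered along a subsequence. The only deviation is that you resize the polygon so that the resulting lattice contains $4K^3\mathbb{Z}^2$ exactly, whereas the paper instead passes to the sublattice $\Delta_K'=2(K^3+2K)\mathbb{Z}^2\subset\Delta_K$ (so its proof as written actually addresses the subgroups $2(K^3+2K)\mathbb{Z}^2$ rather than the $4K^3\mathbb{Z}^2$ of the statement); your version matches the stated normalization $16K^6$ more literally.
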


	\begin{proof}
For every $K\ge1$ we consider the Polygon $\tilde{R}_K$ appearing in Figure \ref{f:figure5} in the proof of Proposition \ref{p:periodic}. There we showed that there exists a set $\tilde{\mathsf{C}}$ of more than $N_K^2/3^{6K^3}$ distinct allowed configurations of paths in $\tilde{R}_K$ corresponding to points in $\textup{Fix}_{\Delta _K}(\Omega _\mathsf{A})$, all of which coincide on the boundary $\partial \tilde{R}_K$ of $\tilde{R}_K$.

We set $\Delta _K'=2(K^3+2K)\mathbb{Z}^2\subset \Delta _K$. Then $|\Delta _K/\Delta _K'|=K^2+2$, and $\Delta _K$ is the disjoint union of cosets $\Delta _K'+\mathbf{v},\,\mathbf{v}\in S_K$, with $|S_K|=K^2+2$. We choose, for every $\mathbf{v}\in S_K$, an arbitrary configuration $\mathsf{q}_\mathbf{v}\in \tilde{\mathsf{C}}$, fill the polygon $\tilde{R}_K+\mathbf{v}$ with the corresponding translate $\mathsf{q}_\mathbf{v}+\mathbf{v}$ of $\mathsf{q}_\mathbf{v}$, and obtain in this manner a family $\tilde{\mathsf{D}}$ of at least $(N_K^2/3^{6K^3})^{|\Delta _K/\Delta _K'|}$ distinct allowed configurations of paths in the set $F_K\coloneqq \bigcup_{\mathbf{v}\in S_K}\tilde{R}_K+\mathbf{v}$, each of which has a unique $\Delta _K'$-invariant extension to $\mathbb{Z}^2$ and determines an element of $\textup{Fix}_{\Delta _K'}(\Omega _\mathsf{A})$. From \eqref{eq:period9} we obtain that
	\begin{align*}
\tfrac{1}{|\mathbb{Z}^2/\Delta _K'|}&\log |\textup{Fix}_{\Delta _K'}(\Omega _\mathsf{A})| \ge \tfrac{1}{|\mathbb{Z}^2/\Delta _K|} \log N_K/3^{3K^3}
	\\
&\ge\tfrac{|[\tilde{R}_K]|}{4K^4+8K^2}\cdot \tfrac{2|[R_K]|}{|[\tilde{R}_K]|} \cdot \bigl(\tfrac{1}{|[R_K]|} \log N_K - \tfrac{3K^3}{2K^4}\log3 \bigr) \xrightarrow {K\to\infty } h(\Omega _\mathsf{A}).
	\end{align*}
Since $\limsup_{K\to\infty }\tfrac{1}{|\mathbb{Z}^2/\Delta _K'|}\log |\textup{Fix}_{\Delta _K'}(\Omega _\mathsf{A})|\le h(\Omega _\mathsf{A})$, this proves the corollary.
	\end{proof}

	\begin{pros}
	\label{p:entropy}
(1) Is $\lim_{K\to \infty }\frac{1}{K^2}\cdot \log\,|\textup{Fix}_{K\mathbb{Z}^2}(\Omega _\mathsf{A})|=h(\Omega _\mathsf{A})$?

\smallskip (2) Does $\Omega _\mathsf{A}$ have a unique shift-invariant probability measure of maximal entropy?

\smallskip (3) Let $\mu $ be \textit{a} (or \textit{the}) shift-invariant probability measure of maximal entropy on $\Omega _\mathsf{A}$. For every $\omega \in \Omega _\mathsf{A}$ and every $n\ge1$, consider the allowed configuration $\mathsf{p}^{(\omega )}(Q_n)\coloneqq \mathsf{p}^{(\pi ^{(\omega )})}\cap Q_n$ of paths in the square $Q_n=\{0,\dots ,n\}^2\subset \mathbb{Z}^2$ determined by $\omega $ (cf. Figure \ref{f:figure8}), and we write $N(\mathsf{p}^{(\omega )}(Q_n))$ for the number of paths (or connected components) of $\mathsf{p}^{(\omega )}(Q_n)$. Is it true that $\lim_{n\to\infty }\frac1n\cdot N(\mathsf{p}^{(\omega )}(Q_n))=\frac23$ for $\mu \textit{-a.e.}\,\omega \in \Omega _\mathsf{A}$, as numerical evidence suggests?

\smallskip (4) How general are the results in this section? Are analogous statements true for every finite set $\mathsf{A}\subset \mathsf{Z}^2$ such that $\Omega _\mathsf{A}$ is topologically mixing?
	\end{pros}


\begin{thebibliography}{99}

\bibitem{BV} S. Boyd and L. Vandenberghe, \textit{Convex optimization}, Cambridge University Press, Cambridge, 2004.

\bibitem{BT} M. Boyle and J Tomiyama, \textit{Bounded topological orbit equivalence and $C^*$-algebras}, J. Math. Soc. Japan \textbf{50} (1998), 317--329.

\bibitem{Burton-Steif} R. Burton and J.E. Steif, \textit{Non-uniqueness of measures of maximal entropy for subshifts of finite type}, Ergod. Th. \& Dynam. Sys. \textbf{14} (1994), 213--235.


\bibitem{DS} C. Deninger and K. Schmidt, \textit{Expansive algebraic actions of discrete residually finite amenable groups and their entropy}, Ergod. Th. \& Dynam. Sys. \textbf{27} (2007), 769--786.


\bibitem{KR} J.W. Kammeyer and D. Rudolph, \textit{Restricted orbit equivalence for ergodic $\mathbf{Z}^d$-actions. I}, Ergod. Th. \& Dynam. Sys. \textbf{17} (1997), 1083--1129.

\bibitem{Klee} V. Klee, \textit{Maximal separation theorems for convex sets}, Trans. Amer. Math. Soc. \textbf{134} (1968), 133--147.

\bibitem{Lind+Schmidt} D. Lind and K. Schmidt, \textit{A survey of algebraic actions of the discrete Heisenberg group}, Russian Math. Surv., \textbf{70} (2015), 657--714.

\bibitem{Lindner+Strang} M. Lindner and G. Strang, \textit{The main diagonal of a permutation matrix}, Linear Algebra Appl. \textbf{439} (2013), 524--537.

\bibitem{Redig} F. Redig, \textit{Mathematical aspects of the abelian sandpile model}, in: Mathematical Statistical Physics, vol. 83, Elsevier, Amsterdam -- New York, 2006, 657--729.

\bibitem{Robinson-Sahin} E.A. Robinson and A.A. \c{S}ahin, \textit{Modeling ergodic, measure-preserving actions on $\mathbb{Z}^d$-shifts of finite type}, Monatsh. Math. \textbf{132} (2001), 237--253.

\bibitem{R} D.J. Rudolph, \textit{Restricted orbit equivalence}, Mem. Amer. Math. Soc., vol. 54, American Mathematical Society, Providence, R.I., 1985.

\bibitem{CocycleNotes} K. Schmidt, \textit{Cocycles on ergodic transformation groups}, MacMillan (India), Delhi, 1977. Available at \url{http://www.mat.univie.ac.at/~kschmidt/Publications/rigveda.pdf}

\bibitem{Stembridge} J.R. Stembridge, \textit{Nonintersecting paths, Pfaffians, and plane partitions}, Adv. in Math. \textbf{83} (1990), 96--131.


\end{thebibliography}
\end{document}